\newcommand{\A}{\mathbf{A}}
\newcommand{\C}{\mathbf{C}}
\newcommand{\F}{\mathbf{F}}
\newcommand{\G}{\mathbb{G}}
\renewcommand{\P}{\mathbf{P}}
\newcommand{\Q}{\mathbf{Q}}
\newcommand{\Z}{\mathbf{Z}}
\newcommand{\sA}{\mathcal{A}}
\newcommand{\sC}{\mathcal{C}}
\newcommand{\sF}{\mathcal{F}}
\newcommand{\sL}{\mathcal{L}}
\newcommand{\sN}{\mathcal{N}}
\newcommand{\sO}{\mathcal{O}}
\newcommand{\cO}{{\mathcal O}}
\newcommand{\sX}{\mathcal{X}}
\newcommand{\sY}{\mathcal{Y}}
\newcommand{\sZ}{\mathcal{Z}}
\newcommand{\fm}{\mathfrak{m}}
\newcommand{\et}{{\operatorname{\acute{e}t}}}
\newcommand{\Mor}{\operatorname{Mor}}
\newcommand{\Alb}{\operatorname{Alb}}
\newcommand{\Ab}{\operatorname{\mathbf{Ab}}}
\newcommand{\Tr}{\operatorname{Tr}}
\newcommand{\alg}{{\operatorname{alg}}}
\newcommand{\num}{{\operatorname{num}}}
\newcommand{\cont}{{\operatorname{cont}}}
\newcommand{\car}{\operatorname{char}}
\newcommand{\Spec}{\operatorname{Spec}}
\newcommand{\Pic}{\operatorname{Pic}}
\newcommand{\LN}{\operatorname{LN}}
\newcommand{\Tor}{\operatorname{Tor}}
\newcommand{\Hom}{\operatorname{Hom}}
\newcommand{\Ext}{\operatorname{Ext}}
\newcommand{\CHC}{\operatorname{CH\mathcal{C}}}
\newcommand{\codim}{\operatorname{codim}}
\newcommand{\tors}{{\operatorname{tors}}}
\newcommand{\cl}{{\operatorname{cl}}}
\newcommand{\AJ}{{\operatorname{AJ}}}
\newcommand{\sm}{{\operatorname{sm}}}
\newcommand{\Coker}{\operatorname{Coker}}
\newcommand{\Ker}{\operatorname{Ker}}
\newcommand{\rk}{\operatorname{rk}}
\newcommand{\IM}{\operatorname{Im}}
\newcommand{\Supp}{\operatorname{Supp}}
\newcommand{\Ann}{\operatorname{Ann}}
\newcommand{\trdeg}{\operatorname{trdeg}}
\renewcommand{\phi}{\varphi}
\renewcommand{\epsilon}{\varepsilon}
\newcommand{\inj}{\hookrightarrow}
\newcommand{\by}{\xrightarrow}
\newcommand{\yb}{\xleftarrow}
\newcommand{\tto}{\dashrightarrow}
\newcommand{\iso}{\by{\sim}}
\newcommand{\osi}{\yb{\sim}}
\renewcommand{\lim}{\varprojlim}
\newcommand{\colim}{\varinjlim}
\newcounter{spec}
\newenvironment{thlist}{\begin{list}{\rm{(\roman{spec})}}%
{\usecounter{spec}\labelwidth=20pt\itemindent=0pt\labelsep=10pt}}%
{\end{list}}%
\DeclareFontFamily{U}{wncy}{}
\DeclareFontShape{U}{wncy}{m}{n}{%
<5>wncyr5%
<6>wncyr6%
<7>wncyr7%
<8>wncyr8%
<9>wncyr9%
<10>wncyr10%
<11>wncyr10%
<12>wncyr6%
<14>wncyr7%
<17>wncyr8%
<20>wncyr10%
<25>wncyr10}{}
\DeclareMathAlphabet{\cyr}{U}{wncy}{m}{n}
\newcommand{\cB}{\cyr{B}}
\newtheorem{thm}{Theorem}[section]
\newtheorem{conj}[thm]{Conjecture}
\newtheorem{lemma}[thm]{Lemma}
\newtheorem{claim}[thm]{Claim}
\newtheorem{prop}[thm]{Proposition}
\newtheorem{cor}[thm]{Corollary}
\theoremstyle{definition}
\newtheorem{defn}[thm]{Definition}
\theoremstyle{remark}
\newtheorem{rque}[thm]{Remark}
\newtheorem{rques}[thm]{Remarks}
\newtheorem{ex}[thm]{Example}
\newtheorem{qn}[thm]{Question}
\begin{document}
\title{Refined height pairing}
\author[Bruno Kahn]{Bruno Kahn}
\address{CNRS, Sorbonne Université and Université Paris Cité, IMJ-PRG\\ Case 247\\4 place
Jussieu\\75252 Paris Cedex 05\\France}
\email{bruno.kahn@imj-prg.fr}
\address{
Univ. Bordeaux, CNRS,  Institut de mathématiques de Bordeaux, UMR 5251,
F-33405 Talence, 
 France}
\email{Qing.Liu@math.u-bordeaux.fr}
\date{Dec. 6, 2023}
\keywords{Intersection theory, alteration theory, category theory}
\subjclass[2010]{14C17,14E15}
\maketitle

\hfill With an appendix by Qing Liu

\begin{abstract}For a $d$-dimensional regular proper variety $X$ over the function field of a smooth variety $B$ over a field $k$ and for $i\ge 0$, we define a subgroup $CH^i(X)^{(0)}$ of $CH^i(X)$ and construct a ``refined height pairing''
\[CH^i(X)^{(0)}\times CH^{d+1-i}(X)^{(0)}\to CH^1(B)\]
 in the category of abelian groups up to isogeny. For $i=1,d$, $CH^i(X)^{(0)}$ is the group of cycles numerically equivalent to $0$. This pairing relates to pairings  defined by P. Schneider and A. Beilinson if $B$ is a curve, to a refined height defined by L. Moret-Bailly when $X$ is an abelian variety, and to a pairing with values in $H^2(B_{\bar k},\Q_l(1))$ defined by D. Rössler and T. Szamuely in general. We study it in detail when $i=1$.
 \end{abstract}

\tableofcontents

\section*{Introduction} 
Let $X$ be a regular proper  (for example, smooth projective) variety of dimension $d$ over a field $K$, finitely generated of transcendence degree $\delta$ over a subfield $k$. Suppose given a smooth (separated) $k$-scheme of finite type $B$, with function field $K$. For $i\in [0,d]$, write $CH^i(X)$ for the $i$-th Chow group of $X$. In this paper, we define a subgroup $CH^i(X)^{[0]}$ and a ``refined height pairing''
\begin{equation}\label{eq0}
CH^i(X)^{[0]}\times CH^{d+1-i}(X)^{[0]}\to CH^1(B)
\end{equation}
in the category $\Ab\otimes \Z[1/p]$ of abelian groups up to $p$-isogeny: this category 
is recalled in \S \ref{s3.2}. Here $p$ is the exponential characteristic of $k$, so nothing is inverted in characteristic $0$; the only reason to invert it in nonzero characteristic is a lack of resolution of singularities, see \S \ref{s4.1}. 

If $B$ is a smooth projective curve and we compose \eqref{eq0} with the degree map, we get a $\Z[1/p]$-valued pairing (with values in $p^{-s} \Z$ for some integer $s\ge 0$), which relates to the one constructed by Beilinson in \cite[\S 1]{beilinson}. In \cite[p. 5]{beilinson}, Beilinson asked what happens when $\trdeg(K/k)>1$: \eqref{eq0} gives one answer to this question.
\bigskip

The quotient $CH^i(X)/CH^i(X)^{[0]}$ is finitely generated. When we vary $(X,i)$, $CH^i(X)^{[0]}$ defines an adequate equivalence relation for smooth projective $K$-varieties, which a priori depends on the choice of $B$.  Its saturation $CH^i(X)^{(0)}$ lies between the subgroups $CH^i_\alg(X)$ and  $CH^i_\num(X)$ of  algebraically and numerically trivial cycles, hence equals $CH^i_\num(X)$ when $i=1,d$. We conjecture that this holds for all $i$, and prove it in further special cases (Theorem \ref{t5.1} (ii)). One can show that it would follow in general from the Tate conjecture, or the Hodge conjecture in characteristic $0$, for cycles of codimension $<i$, although we don't include a proof here. 
More generally, one might hope that Lemma \ref{l1} below induces pairings in $\Ab\otimes \Q$
\[F^nCH^i(X)\times F^nCH^{d+n-i}(X)\to CH^n(B), \quad i\ge 0\]
where $F^*CH^*(X)$ is the conjectural Bloch-Beilinson--Murre filtration \cite{jannsen2}, the case $n=0$ (resp. $1$) being the intersection pairing (resp. \eqref{eq0}).
\bigskip

Works following Néron's seminal paper \cite{neron} have much relied on $l$-adic cohomology to analyse or define height pairings (because of the cohomological definition of Hasse-Weil $L$-functions): for $\delta=1$, this is the case in Schneider \cite{schneider} ($i=1$, $X$ an abelian variety), Bloch \cite{bloch} and Beilinson \cite{beilinson}. This is also the case in the work of Damian R\"ossler and Tam\'as Szamuely \cite{RS} which is the direct inspiration of this one: they construct a pairing
\begin{equation}\label{eq01}
CH^i_l(X)\times CH^{d+1-i}_l(X)\to H^2_\et(B_{\bar k},\Q_l(1))
\end{equation}
where $l$ is a prime number invertible in $k$ and $CH^i_l(X)$ denotes cycles homologically equivalent to $0$ with respect to $l$-adic cohomology. By contrast, our approach here is completely cycle-theoretic and very close in spirit to  Moret-Bailly's geometric height from \cite[Ch. III, Déf. 3.2]{mb}; it relies on Fulton's marvellous theory of Gysin maps  \cite[Ch. 6 and 8]{fulton}. This gives a different flavour to the definitions because numerical and homological equivalence have rather opposite functoriality under specialisation, as described in detail by Grothendieck in \cite[7.9 and 7.13]{groth}. See Remark \ref{r2.4}.
\bigskip

Comparing various definitions of height pairings is a highly nontrivial issue, which is solved only in a few cases: for example, as far as I know those defined by Bloch and Beilinson in \cite{bloch} and \cite{beilinson} have still not been checked to agree. In \cite{schneider}, Schneider compares an $l$-adic height pairing (loc. cit., p. 298) with the Néron-Tate height by comparing each to an intermediate Yoneda pairing (loc. cit., p. 502)
\begin{equation}\label{eq0.1}
H^0(B,\sA^0)\times \Ext^1_B(\sA^0,\G_m)\to CH^1(B)
\end{equation}
where $\sA^0$ is the connected component of the identity of the Néron model $\sA$ of the abelian variety $A$ (=$X$ here). 

In Proposition \ref{r2.5}, I show that \eqref{eq0} and \eqref{eq01} are compatible (at least in characteristic $0$) on a common subgroup $CH^*_{\cB,l}(X)$ of  $CH^*_l(X)$ and $CH^*(X)^{[0]}$ via the cycle class map $\Pic(B)\to H^2_\et(B_{\bar k},\Q_l(1))$: this is what Rössler and Szamuely had checked in the special case where $X/K$ has a \emph{smooth} model, by using a variant of Proposition \ref{p1} here \cite[Prop. 6.1]{RS}. In Theorem \ref{t5.2}, I show that \eqref{eq0} is the opposite of Silverman's refined height pairing of \cite[Th. III.9.5 (b)]{silverman} in the classical case of an elliptic curve $X$ over the function field of a smooth projective curve $B$ over an algebraically closed field $k$. 

Another case where a compatibility should not be hard to show is that of \cite{mb}.
\bigskip

Note that  \eqref{eq0} is finer than \eqref{eq01} inasmuch as it takes homologically trivial cycles on $B$ into account. This extra structure is presumably  arithmetically significant; it is studied in Subsection \ref{s5.1} in the case $d=1$, $B$ projective.
\bigskip

It may seem disturbing that \eqref{eq0} is essentially integral, while the classical height pairing is usually rational: this may be ``explained'' by \eqref{eq0.1} which is integral but takes values on the subgroup of finite index $\sA^0(B)\subseteq A(K)$. In this spirit, I show in Remark \ref{r5.1} a) that in the elliptic curve case mentioned above, $CH^1(X)^{[0]}$ contains $\sN^0(B)$ as a subgroup of finite index, where $\sN^0$ is the identity component of the Néron model of $X$.
\bigskip 

The \emph{raison d'être} of \cite{beilinson} and  \cite{bloch} was to  refine the conjectures of Tate on the orders of poles of zeta functions at integers \cite{tate}, by describing special values at these integers,  when $K$ is a global field. Thus one might like to extend \eqref{eq0} to the case where $B$ is regular and flat over $\Z$. I consider  this as beyond the scope of this article for two reasons:

\begin{itemize}
\item The present method fails in this case even if one is given a regular projective model $f:\sX\to B$ of $X$, because Fulton's techniques do not define an intersection product on $\sX$, except  when $\delta=1$ and $f$ is smooth \cite[p. 397]{fulton}. One does get an intersection product with $\Q$ coefficients, by using either $K$-theory as in Gillet-Soulé \cite[8.3]{gs}, or  alterations and deformation to the normal cone as in Andreas Weber's thesis \cite[Cor. 4.2.3 and Th. 4.3.3]{weber};  it is possible that the present approach may be adapted by using one of these products.
\item However, the main point in characteristic $0$ is to involve archi\-med\-ean places to get a complete height pairing whose determinant has a chance to describe the special values as mentioned above: this is what was done successfully in \cite{bloch} and \cite{beilinson} when $\delta=1$. In higher dimensions, one probably would have to use something like Arakelov intersection theory (see \cite[Conj. 7.1]{RS} for a conjectural statement).
\end{itemize}

I leave these issues to the interested readers. Rather, I hope to show here that height pairings in the style of \eqref{eq0} also raise interesting geometric questions. These are discussed in Section \ref{s5}, which is closely related to \cite[Question 7.6]{surfoncurve}.

\subsection*{Contents} Up to Subsection \ref{s3.1}, we assume $k$ perfect; this assumption is removed in the said subsection. In Definition \ref{r2.1}, we introduce subgroups $CH^i(\sX)^0$ of \emph{admissible cycles} in the Chow groups of a $k$-model $f:\sX\to B$ of $f':X\to \Spec K$, with $\sX$ smooth; when $B$ is projective, $CH^i(\sX)^0$ contains numerically trivial cycles (Proposition \ref{p2.4}) and in general it contains locally homologically trivial cycles in the sense of Beilinson \cite[1.2]{beilinson}  (Proposition \ref{p2.1}). From the intersection pairing on $\sX$, pushed forward to $CH^1(B)$, we then get thanks to Proposition \ref{p1} a height pairing $\langle, \rangle_f$ defined on the groups $CH^i(X)^0_f:=\IM(CH^i(\sX)^0\to CH^i(X))$ \eqref{eq4}. This is a pairing  of genuine abelian groups. We prove in Propositions \ref{l6} and \ref{r4} that  the $CH^i(X)^0_f$ and $\langle, \rangle_f$ are independent of $f$ and compatible with the action of correspondences, and  in Proposition \ref{p5} that they behave well with respect to base change. The group $CH^i(X)/CH^i(X)^0$ is finitely generated (Proposition \ref{p2.2}). 

If we are in characteristic $0$, the construction is finished since $X$ always admits a smooth model by resolution of singularities (Proposition \ref{p3.1}). In characteristic $p>0$, there turns out to be quite a bit of work to get a pairing in general after suitably inverting $p$, by using Gabber's refinement of de Jong's theorem: the general height pairing \eqref{eq9} is defined in Theorem \ref{p3}; as said above, it makes sense in the category $\Ab\otimes \Z[1/p]$. Functoriality and base change extend to this pairing (ibid.)

In Section \ref{s4}, we investigate Conjecture \ref{c1}: $CH^i(X)^{[0]}$ is of finite index in $CH^i_\num(X)$, the group of cycles numerically equivalent to $0$ (the inclusion is always true by Lemma \ref{l4.3} d)); we prove it for $i=1,d$ in Proposition \ref{p2} b) (see Theorem \ref{t5.1} (ii) for other cases). In \S \ref{s5.3}, we also relate \eqref{eq0} to the classical Néron-Tate height pairing in the case where $X$ is an elliptic curve and $B$ is a smooth projective curve.

In Section \ref{s5}, we study the height pairing \eqref{eq4} in the basic case $i=1$. If $B$ is projective, it leads to a coarser pairing \eqref{eq5.2} between the Lang-Néron groups $\LN(\Pic^0_X,K/k)$ and $\LN(\Alb_X,K/k)$ with values in $N^1(B)$, codimension $1$ cycles modulo numerical equivalence (Theorem \ref{t2}). When $\delta=1$, a version of this pairing involving an ample divisor is negative definite (Theorem \ref{t3}): one should compare this with a result of Shioda when $d=1$ \cite{shioda}. See also Theorem \ref{t3} for a conjectural statement when $\delta>1$. We finally get an intriguing homomorphism from $\LN(\Pic^0_X,K/k)$ to homomorphisms between certain abelian varieties in \eqref{eq5.4}.

\subsection*{Acknowlegdements} What triggered me to start this research was a talk by Tam\'as Szamuely at the June 2019 Oberwolfach workshop on algebraic $K$-theory, where he explained a preliminary version of \cite{RS}; I thank him and Damian Rössler for several exchanges during the preparation of this work. Part of this theory was developed while I was visiting Jilali Assim in Meknès in March 2020; I would like to thank him and Université Moulay Ismail for their hospitality and excellent working conditions. 
I thank Qing Liu for accepting to write the appendix and for helping me with references in EGA, Marc Hindry for a  discussion around Remark \ref{r5.1} a) and Tamás Szamuely for his help with the end of the proof of Proposition \ref{r2.5}.

It is common to thank the referees for helpful remarks. Here, I would like to stress my appreciation for exceptionally lucid and helpful reports from one of them. Not only did he spot gaps and mistakes in some of my initial proofs, but his insights eventually led to a much more direct construction of the refined height pairing, making it more integral and completely avoiding my initial recourse to semi-stable models (see Remark \ref{r2.2} c)). 

\subsection*{Notation and conventions} We try and follow Fulton's notation in \cite{fulton} as much as possible. In particular, given a morphism of $k$-schemes $f:X\to Y$, we write $\gamma_f$ for the associated graph morphism $X\to X\times_k Y$ and $\delta_X$ for $\gamma_{1_X}$; if $f$ admits refined Gysin morphisms as in loc. cit., Ch. 6 and 8, we write them $f^!$ and sometimes use the notation $f^*$ for ordinary Gysin morphisms.

We usually abbreviate the notation $\times_k$ (fibre product over $k$) to $\times$, and re-establish it when it may be confused with other fibre products.

We shall encounter $k$-schemes essentially of finite type, being of finite type over some localisation of $B$. We shall sometimes commit the abuse of treating them as if they were of finite type: for example, call them smooth even if they really are essentially smooth, and take (refined) Gysin morphisms associated to morphisms between them even if these morphisms are not of finite type. This is easily justified by the fact that Chow groups commute with inverse limits of open immersions  \cite[Lemma IA.1]{bloch2}.

\numberwithin{equation}{section}

\section{An elementary reduction}\label{s1.1} 

\subsection{Intersection on regular $K$-schemes}\label{r1.1} Let $K$ be a field. If $\car K=0$, every regular $K$-scheme $X$, separated of finite type, is smooth, so the intersection theory of \cite[Ch. 8]{fulton} applies. Here we point out that this is also true in characteristic $p>0$: it will be needed in \S\S \ref{s4.2} ff.

We may assume $K$ to be finitely generated over its (perfect) subfield $k=\F_p$, and  $X$ (regular) to be irreducible of dimension $d$.  We may find a smooth connected separated $k$-scheme $B$ of finite type with generic point $\eta=\Spec K$, and a dominant morphism $f:\sX\to B$ with $\sX$ $k$-smooth, of generic fibre $X$. We have the intersection pairing of \cite[\S 8.1]{fulton}: for $i,r\ge 0$,
\begin{equation}\label{eq1}
CH^i(\sX)\times CH^{d+r-i}(\sX)\by{\cdot} CH^{d+r}(\sX)
\end{equation}
which commutes with base change by \cite[Prop. 6.6 (c) and 8.3 (a)]{fulton}. Then \eqref{eq1} induces an intersection product on $X$ by passing to the limit. If $f_1:\sX_1\to B_1$ is another choice, then $B$ and $B_1$ share a common open subset with isomorphic fibres, so this intersection product is  independent of the choice of $(B,f)$.

Suppose moreover $X$ and $f$ proper. Composing \eqref{eq1} with $f_*$, we get a pairing
\begin{equation}\label{eq2}
CH^i(\sX)\times CH^{d+r-i}(\sX)\by{\langle, \rangle} CH^{r}(B)
\end{equation}
 
For the same reason, numerical equivalence makes sense on $X$ via \eqref{eq2}, and does not depend on any choice.

\subsection{The set-up}\label{s1.2} Let now $k$ be any perfect field; we place ourselves in the situation $(B,\sX,f)$ of \S\ref{r1.1} with $f$ proper, and let $f':X\to \eta$ be the generic fibre of $f$. In particular, the observations of \S\ref{r1.1} apply to $X$.

For a subscheme  $Z$ of $B$, write $\sX_Z=f^{-1}(Z)$, $\iota:\sX_Z\inj \sX$ for the corresponding immersion and $f_Z:\sX_Z\to Z$ for the projection induced by $f$. We extend these notations to pull-backs by a morphism $Z\to B$ when there is no ambiguity in the context.

\begin{lemma}\label{l1} Suppose that $\codim_B Z>r$. Then \eqref{eq2} factors through a pairing
\[CH^i(\sX-\sX_Z)\times CH^{d+r-i}(\sX-\sX_Z)\by{\langle, \rangle} CH^{r}(B).\]
\end{lemma}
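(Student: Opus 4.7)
My plan is to combine the localization sequence for the closed–open decomposition $\sX_Z \hookrightarrow \sX \hookleftarrow \sX-\sX_Z$ with the projection formula and a dimension count. The idea is that any cycle on $\sX$ whose restriction to $\sX-\sX_Z$ vanishes comes from a cycle supported on $\sX_Z$, and once pushed forward through $f$ such contributions land in Chow groups of $Z$ that vanish for dimension reasons.

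\textbf{Reduction to supported cycles.} Let $i:\sX_Z\inj\sX$ be the closed immersion and $j:\sX-\sX_Z\inj\sX$ the complementary open immersion. The localisation sequence \cite[Prop.~1.8]{fulton}
\[CH_*(\sX_Z)\by{i_*}CH_*(\sX)\by{j^*}CH_*(\sX-\sX_Z)\to 0\]
shows that to produce the desired factorisation it suffices to verify that $\langle \alpha,\beta\rangle=0$ whenever $\alpha=i_*\alpha'$ for some $\alpha'\in CH_*(\sX_Z)$, and symmetrically for $\beta$.

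\textbf{Projection formula and dimension count.} Assume $\alpha=i_*\alpha'$. Since $\sX$ is smooth, the refined intersection product on $\sX$ (via the diagonal $\Delta:\sX\to\sX\times\sX$, a regular embedding) yields a well-defined operation $\alpha'\cdot\beta\in CH_*(\sX_Z)$, and the projection formula \cite[Thm.~8.3, Prop.~8.1.1(c)]{fulton} gives
\[(i_*\alpha')\cdot\beta=i_*(\alpha'\cdot\beta)\quad\text{in }CH_*(\sX).\]
Factoring $f\circ i$ as $\sX_Z\by{g} Z\by{j_Z} B$, we obtain
\[\langle\alpha,\beta\rangle=f_*\bigl((i_*\alpha')\cdot\beta\bigr)=(j_Z)_*\,g_*(\alpha'\cdot\beta).\]
Now count dimensions: $\sX$ is smooth of pure dimension $n=d+\dim B$, so $\alpha'\in CH_{n-i}(\sX_Z)$ and $\beta\in CH_{n-d-r+i}(\sX)$, hence $\alpha'\cdot\beta\in CH_{\dim B-r}(\sX_Z)$ and $g_*(\alpha'\cdot\beta)\in CH_{\dim B-r}(Z)$. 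By hypothesis $\dim Z=\dim B-\codim_B Z<\dim B-r$, so $CH_{\dim B-r}(Z)=0$ and $\langle\alpha,\beta\rangle=0$. The case $\beta=i_*\beta'$ is identical.

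\textbf{Main obstacle.} Conceptually the argument is a formality, so the only thing requiring care is the application of the projection formula when $\sX_Z$ is possibly singular or non-equidimensional; smoothness of $\sX$ (not of $\sX_Z$) is what makes the refined intersection $\alpha'\cdot\beta$ on $\sX_Z$ well defined, so Fulton's framework applies without modification. The dimensional vanishing on $Z$ is exactly what the hypothesis $\codim_B Z>r$ is designed to deliver.
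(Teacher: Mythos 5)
Your proof is correct, but it runs along a different track from the paper's. The paper's argument is a one-liner on the \emph{base}: since $\codim_B Z>r$ forces $CH_{\dim B-r}(Z)=0$, the restriction $CH^r(B)\to CH^r(B-Z)$ is an isomorphism (localisation on $B$), and the pairing \eqref{eq2} commutes with restriction to the open subset $B-Z$ by the flat base-change compatibilities of $f_*$ and the intersection product cited just before the lemma; hence the pairing, read in $CH^r(B-Z)\cong CH^r(B)$, only sees $\alpha|_{\sX-\sX_Z}$ and $\beta|_{\sX-\sX_Z}$. You instead apply localisation on the \emph{total space} $\sX$, identify $\Ker j^*$ with $\IM i_*$, and kill the supported contributions via the refined projection formula and the vanishing of $CH_{\dim B-r}(Z)$. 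Both proofs ultimately rest on the same dimension count on $Z$; yours is longer but more explicit, and it is in fact the exact template the paper itself deploys in Proposition \ref{p1} to handle the harder case $\codim_B Z=1$ (there the cap product $\cdot_i$ and the projection formula \eqref{eq3} play the role of your $\alpha'\cdot\beta$ and $(i_*\alpha')\cdot\beta=i_*(\alpha'\cdot\beta)$), whereas the paper's proof of Lemma \ref{l1} gets away with less because for $\codim_B Z>r$ the target group itself does not see $Z$. Your treatment of the possibly singular, non-equidimensional $\sX_Z$ (working in $CH_*$ and using only smoothness of $\sX$ to define the refined product) is the right care to take, so there is no gap.
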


\begin{proof} We have
\begin{equation}\label{eq00}
CH^r(B)\iso CH^r(B-Z).
\end{equation}
\end{proof}

We shall use the case $r=1$ of this lemma in the rest of this paper. 

\begin{rques}\label{r1} a) Let $\sZ$ be the locus of non-smoothness of $f$. If $f'$ is smooth, $f(\sZ)$ is a proper closed subset of $B$, hence contains only finitely many points of $B^{(1)}$, the set of codimension $1$ points of $B$.

b) If $\delta=1$, any proper surjective morphism $\phi$ from an irreducible $k$-variety $V$ to $B$ is flat \cite[Ch. II, Prop. 9.7]{hartshorne}; in general, this is true after base-changing to the local scheme at any point $b\in B^{(1)}$. If $F\subset V$ is the (closed) locus of non-flatness of $\phi$, the closed subset $\phi(F)$ is therefore of codimension $\ge 2$ in $B$. This shows that one may reduce to $\phi$ flat by removing a closed subset of codimension $\ge 2$ from $B$. This technique may be applied to $f$ if necessary; a variant will be used in the proof of Proposition \ref{p2.2}.
\end{rques}

Let $CH^i_\num(X)$ denote the subgroup of $CH^i(X)$ formed of cycles numerically equivalent to $0$; write $j$ for the inclusion $X\inj \sX$.

\begin{lemma}\label{l4}
For $\alpha\in CH^i(\sX)$, the following are equivalent:
\begin{enumerate}
\item $j^*\alpha\in CH^i_\num(X)$;
\item for any $\beta\in CH^{d-i}(\sX)$, $f_*(\alpha\cdot \beta)=0$.
\end{enumerate}
\end{lemma}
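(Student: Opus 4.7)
The plan is to reduce both implications to the identity
\[ f_*(\alpha\cdot \beta)\;=\;\deg_K\bigl(j^*\alpha\cdot j^*\beta\bigr)\cdot[B] \qquad \text{in } CH^0(B), \]
where $\deg_K\colon CH^d(X)\to \Z$ is the degree map on the smooth projective $K$-variety $X$. To see this, I would first note that $\alpha\cdot \beta\in CH^d(\sX)$ is a cycle of dimension $\dim B$, so $f_*(\alpha\cdot\beta)$ lies in $CH^0(B)=\Z\cdot[B]$ (using that $B$ is irreducible). Its coefficient can be read off by pulling back to $\eta$: by flat base change (\cite[Prop.~1.7]{fulton}) together with the compatibility of $j^*$ with intersection products (\cite[8.1]{fulton}), the pullback along $\eta\hookrightarrow B$ sends $f_*(\alpha\cdot\beta)$ to $(f')_*(j^*\alpha\cdot j^*\beta)=\deg_K(j^*\alpha\cdot j^*\beta)$ in $CH^0(\eta)=\Z$, and the restriction map $CH^0(B)\to CH^0(\eta)$ is the identity on $\Z$.

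Once the identity is in hand, the implication (1) $\Rightarrow$ (2) is immediate from the definition of numerical equivalence: if $j^*\alpha$ is numerically trivial on $X$, then $\deg_K(j^*\alpha\cdot\gamma)=0$ for every $\gamma\in CH^{d-i}(X)$, in particular for $\gamma = j^*\beta$.

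For (2) $\Rightarrow$ (1), I need to know that $j^*\colon CH^{d-i}(\sX)\to CH^{d-i}(X)$ is surjective, so that condition (2) really rules out all possible test cycles $\gamma$ on $X$. This is a routine application of the localization exact sequence: writing $X=\varprojlim_{\eta\in U} \sX_U$ as a filtered limit over open neighborhoods of $\eta$ in $B$, we have $CH^{d-i}(X)=\varinjlim_U CH^{d-i}(\sX_U)$, and each restriction $CH^{d-i}(\sX)\twoheadrightarrow CH^{d-i}(\sX_U)$ is surjective by \cite[Prop.~1.8]{fulton}. Combined with the identity, this yields $\deg_K(j^*\alpha\cdot\gamma)=0$ for every $\gamma\in CH^{d-i}(X)$, i.e.\ $j^*\alpha\in CH^i_\num(X)$.

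I do not anticipate a serious obstacle: the entire argument is a synthesis of standard Fulton machinery. The only mildly delicate point is the flat base change along the non-closed inclusion $\eta\hookrightarrow B$, but this is harmless because it factors as a filtered colimit of open immersions, along which base change is textbook.
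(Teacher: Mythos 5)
Your proposal is correct and is essentially the paper's own argument: your identity $f_*(\alpha\cdot\beta)=\deg_K(j^*\alpha\cdot j^*\beta)\cdot[B]$ is exactly the paper's formula $\jmath^*f_*(\alpha\cdot\beta)=f'_*(j^*\alpha\cdot j^*\beta)$ combined with the fact that $\jmath^*\colon CH^0(B)\to CH^0(\eta)$ is injective (indeed an isomorphism, since $B$ is irreducible), and both directions then follow as in the paper, using the surjectivity of $j^*$ for (2) $\Rightarrow$ (1). The only difference is that you spell out the surjectivity of $j^*$ via the localization sequence and a limit argument, which the paper takes as standard.
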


\begin{proof} (2) $\Rightarrow$ (1) because of the surjectivity of $j^*$ and the formula
\begin{equation}\label{eq5}
\jmath^*f_*(\alpha\cdot \beta) =f'_*j^*(\alpha\cdot \beta) = f'_*(j^*\alpha\cdot j^*\beta)
\end{equation}
\cite[Prop. 1.7 and 8.3 (a)]{fulton} where $\jmath:\eta\inj B$ is the inclusion, and (1) $\Rightarrow$ (2) because of \eqref{eq5} and the injectivity of $\jmath^*:CH^0(B)\to CH^0(\eta)$.
\end{proof}

\section{The refined height pairing}\label{s2.0}

We keep the set-up of \S \ref{s1.2}. 

\subsection{Review of Fulton's refined Gysin morphisms} Let $f:X\to Y$ be a morphism of algebraic $k$-schemes, of constant dimensions $d_X$ and $d_Y$ for simplicity, and let $d=d_Y-d_X$. In certain cases, Fulton associates to $f$ ``refined Gysin morphisms''
\[f^!:CH_*(Y')\to CH_{*-d}(X\times_Y Y')\]
for any $Y$-scheme $Y'$; these morphisms are compatible with push-forward, pull-back and intersection products in the sense of \cite[Def. 17.1]{fulton}. Such collections of morphisms are called \emph{orientations} in loc. cit., \S 17.4. Orientable morphisms are
\begin{itemize}
\item flat morphisms (loc. cit., Th. 1.7),
\item regular embeddings (loc. cit., \S\S 6.2, 6.4), 
\item more generally, l.c.i. morphisms (loc. cit., \S 6.5),
\item morphisms to a smooth $Y$ (loc. cit., Def. 8.1.2). 
\end{itemize}

The definitions of $f^!$ agree when $f$ is of several of these forms at the same time, e.g. loc. cit., Prop. 8.1.2. The assignment $f\mapsto f^!$ is functorial in certain cases, many of which are summarised in loc. cit., Ex. 17.4.6.

Since it is difficult to find a unified statement of all these compatibilities in \cite{fulton}, we shall strive to give precise references for all those we use; the above reminder should only be viewed as a guide to the reader.

We shall very often use the following situation, that we record as a lemma.

\begin{lemma}\label{l2.5} Let
\[\begin{CD}
S'@>f'>> T'\\
@Vg'VV @VgVV\\
S@>f>> T
\end{CD}\]
be a Cartesian square of $k$-schemes, where $g$ is proper and $f$ is an l.c.i. morphism. Then\\
a) One has
\[ f^!g_* = g'_*f^! \]
as homomorphisms from $CH_*(T')$ to $CH_*(S)$.\\
b) If $f'$ is also an l.c.i. morphism, of same codimension, then $f^!={f'}^!$.\\
c) If $f$ and $g$ are two composable l.c.i. morphisms, then $(g\circ f)^!=f^!\circ g^!$.
\end{lemma}

\begin{proof} This follows from \cite[Th. 6.6 (c)]{fulton}.
\end{proof}

\subsection{Admissible cycles} \label{s2.1} Let $b\in B^{(1)}$; write $Z=\overline{\{b\}}$. 
  Recall the cap-product \cite[p. 131]{fulton}
\begin{align*}
\cdot_\iota:CH^i(\sX)\times CH_l(\sX_Z)&\to CH_{l-i}(\sX_Z) \\
(\alpha,\beta) &\mapsto \gamma_\iota^!(\beta\times \alpha)
\end{align*}
where $\iota$ is the closed immersion $\sX_Z \inj \sX$. 

Take $l=\delta+i-1$. Composing with $(f_Z)_*$, we get a pairing
\begin{align}
\langle, \rangle_b:CH^i(\sX)\times CH_{\delta+i-1}(\sX_Z)&\to CH_{\delta-1}(Z)=CH^0(Z)=\Z\label{eq2.1}\\
\langle\alpha,\beta \rangle_b&=(f_Z)_*(\alpha\cdot_\iota \beta).\notag
\end{align}

We record two useful formulas:
\begin{equation}\label{eq3}
\alpha \cdot \iota_*\beta=\iota_*(\alpha \cdot_\iota \beta) \in CH_{l-i}(\sX)
\end{equation}
which follows from  Lemma \ref{l2.5} applied to the Cartesian diagram
\[\begin{CD}
\sX_Z @>\gamma_\iota >> \sX_Z\times \sX\\
@V \iota VV @V \iota\times 1 VV\\
\sX @>\Delta_\sX>> \sX\times \sX
\end{CD}\]
of regular embeddings of codimension $d+\delta$. Hence
\begin{equation}\label{eq6}
f_*(\alpha\cdot \iota_*\beta)=f_*\iota_*(\alpha\cdot_\iota\beta)=\iota'_*\langle\alpha,\beta\rangle_b,
\end{equation} 
where $\iota'$ is the closed immersion $Z\inj B$.

\begin{defn}\label{r2.1} With the above notation, we set
\begin{multline*}
CH^i(\sX)_b^0=\{\alpha\in CH^i(\sX)\mid j^*\alpha\in CH^i_\num(X)\\
 \text{ and } \langle\alpha,\beta\rangle_b=0\ \forall \beta\in CH_{\delta+i-1}(\sX_Z)\}
 \end{multline*}
for $b\in B^{(1)}$, and
\[CH^i(\sX)^0=\bigcap_{b\in B^{(1)}}CH^i(\sX)_b^0.\]
We call the cycles in $CH^i(\sX)^0$ \emph{admissible}.
\end{defn}

Even if it is not apparent anymore, this definition was inspired by \cite[Assumption 2]{bloch} and \cite[1.2]{beilinson}.

\begin{rques}\label{r2.2} 
a) One should be careful that $CH^i(\sX)^0$ does not contain $\Ker j^*$ in general. For example, let $B=\A^1=\Spec k[t]$ and let $\sX$ be the hypersurface in $B\times \P^2$ with (partly) homogeneous equation $tX_0^2=X_1X_2$. 
Then the pull-back of the curve $(t=X_1=0)$, viewed as a codimension $1$ cycle on $\sX$, to the curve $(t=X_2=0)$, is  the point $(0,(1:0:0))$ which is not numerically equivalent to $0$. On the other hand, if $f$ is smooth above $\Spec \sO_{B,b}$ for a $b\in B^{(1)}$, then any element of $\Ker j^*$ vanishes when restricted to $\sX_b$ thanks to \cite[\S 20.3]{fulton}. So this caveat only involves finitely many exceptional $b$'s.

b) The pairing \eqref{eq2.1} makes sense for any $b\in B$ (replacing $CH_{\delta+i-1}(Z)$ by $CH_{\delta+i-r}(Z)$ if $b\in B^{(r)}$), and defines an equivalence relation $\alpha\equiv_b 0$ if $\langle \alpha,\beta\rangle_b=0$ for any $\beta\in CH_{\delta+i-r}(Z)$. One can show that $\alpha\equiv_{b'}0\Rightarrow \alpha\equiv_b0$ if $b'$ is a specialisation of $b$; in particular,  the condition $j^*\alpha\in CH^i_\num(X)$ is superfluous in the definition of $CH^i(\sX)_b^0$, thanks to Lemma \ref{l4}. We shall not use these facts in the present paper, so the rather long proof is omitted (see \cite{pointwise}). 

c) Let $b\in B^{(1)}$. Suppose that all the irreducible components $\sX_b^\lambda$ of $\sX_b$ are of dimension $d$ and smooth over $k(b)$. Then it is easy to see that $\alpha\equiv_b 0$ if and only if $ \kappa_\lambda^!\alpha\in CH^i_\num(\sX_b^\lambda)$
for all $\lambda$, where $\kappa_\lambda:\sX_b^\lambda \inj \sX$ is the inclusion. Our initial approach to the refined height pairing was based on such models; they are not necessary anymore.
\end{rques}

We obviously have

\begin{lemma}\label{l3}  The quotient $CH^i(\sX)/CH^i(\sX)^0$ is torsion-free.\qed
\end{lemma}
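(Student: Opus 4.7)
The plan is to realize $CH^i(\sX)/CH^i(\sX)^0$ as a subgroup of a product of torsion-free abelian groups, which forces it to be torsion-free.

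First I would unpack Definition \ref{d2}. Writing $\pi^*$ for any of the Gysin pullbacks $j^*$ (to the generic fibre $X$) and $\kappa_\lambda^*$ (to the components $D^b_\lambda$ of the semi-stable fibres $\sX_b$ for $b\in B^{(1)}$), the subgroup $CH^i(\sX)^0$ is by construction the intersection over all such $\pi$ of the preimages under $\pi^*$ of $CH^i_\num(\text{target})$. Equivalently, $CH^i(\sX)^0$ is the kernel of the natural homomorphism
\[
\Phi\colon CH^i(\sX)\longrightarrow \frac{CH^i(X)}{CH^i_\num(X)}\times \prod_{b\in B^{(1)}}\prod_{\lambda}\frac{CH^i(D^b_\lambda)}{CH^i_\num(D^b_\lambda)},
\]
so $CH^i(\sX)/CH^i(\sX)^0\hookrightarrow \IM(\Phi)$ embeds into the right-hand side.

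Next I would observe that for any smooth projective variety $W$ over any field, the quotient $CH^i(W)/CH^i_\num(W)$ is torsion-free: if $m\alpha\sim_\num 0$ then for every $\beta$ of complementary dimension the degree $m\deg(\alpha\cdot\beta)=0$ in $\Z$, hence $\deg(\alpha\cdot\beta)=0$ and $\alpha\sim_\num 0$. The varieties appearing as targets of $\Phi$ are $X$ (smooth projective over $K$ by assumption on $f'$) and the components $D^b_\lambda$ of semi-stable fibres, which are smooth projective over the residue field $\kappa(b)$ by Definition \ref{d0}, so this applies to every factor.

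A product of torsion-free abelian groups is torsion-free, and a subgroup of a torsion-free group is torsion-free; combining this with the injection above gives the lemma. I do not foresee any real obstacle: the content is entirely in recognizing $CH^i(\sX)^0$ as an intersection of kernels landing in groups that are torsion-free for the standard reason about numerical equivalence.
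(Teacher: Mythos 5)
Your proof is correct, and it is exactly the argument the paper has in mind: the lemma is stated with ``We obviously have'' and a \qed, the implicit reasoning being that $CH^i(\sX)^0$ is by Definition \ref{d2} the kernel of a map into a product of quotients $CH^i(\cdot)/CH^i_\num(\cdot)$, each of which is torsion-free because numerical triviality is detected by integer-valued degree pairings. Nothing is missing.
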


\subsection{Comparison with numerical and homological equivalence}\label{s2.3}

\begin{prop}\label{p2.4} If $B$ is projective (hence $\sX$ is $k$-proper), we have $CH^i_\num(\sX)\allowbreak\subseteq CH^i(\sX)^0$.
\end{prop}

\begin{proof} Let $\alpha\in CH^i_\num(\sX)$: we want to show that $\alpha\in CH^i(\sX)^0$. Let first $\beta\in CH^{d-i}(\sX)=CH_{\delta+i}(\sX)$. Choose a $0$-cycle $z\in CH_0(B)$ of nonzero degree. Then
\[0=\deg(\alpha\cdot \beta\cdot f^*z) = \deg(f_*(\alpha\cdot \beta)\cdot z)=f_*(\alpha\cdot \beta)\deg(z)\]
hence $f_*(\alpha\cdot \beta)=0$, and we conclude that $j^*\alpha\in CH^i_\num(X)$ by Lemma \ref{l4}. 

Let now $b\in B^{(1)}$, and $Z=\overline{\{b\}}$ as above. Let $\beta\in CH_{\delta+i-1}(\sX_Z)$. We have this time
\[0=f_*(\alpha\cdot \iota_*\beta\cdot f^*z) =  f_*(\alpha\cdot \iota_*\beta)\cdot z\]
for any $z\in CH_1(B)=CH^{\delta-1}(B)$, i.e. $f_*(\alpha\cdot \iota_*\beta)=\iota'_*\langle\alpha,\beta\rangle_b\in CH^1_\num(B)$ (see \eqref{eq6}). But $\iota'_*:\Z=CH^0(Z)\to CH^1(B)/CH^1_\num(B)$ is injective since $Z$, as an irreducible divisor on a smooth projective variety, is not numerically equivalent to $0$ (compare \cite[Ch. I, Th. 1.21]{debarre}). Therefore $\langle\alpha,\beta\rangle_b=0$, as requested.
\end{proof}

Let now $l$ be a prime number invertible in $k$. We have a composition
\begin{equation}\label{eq2.6}
CH^i(\sX)\to H^{2i}(\sX_{\bar k},\Q_l(i)) \to H^0(B_{\bar k},R^{2i}f_*\Q_l(i))
\end{equation}
where the first map is the (geometric) cycle class map. Write $CH^i_l(\sX)$ (resp. $CH^i(\sX)^0_{\cB,l}$ for the kernel of the first map (resp. of their composition): the latter group is introduced by analogy to \cite[1.2]{beilinson} which is the special case $\delta=1$, $k$ algebraically closed. We obviously have $CH^i_l(\sX)\subseteq CH^i(\sX)^0_{\cB,l}$.
\bigskip

 The following is parallel to Proposition \ref{p2.4}, without assuming $B$ projective. It will be used in Remarks \ref{r2.5} and \ref{r6.1} a) and in Proposition \ref{r2.3}.

\begin{prop}\label{p2.1} At least in characteristic $0$, $CH^i(\sX)^0_{\cB,l}\allowbreak\subseteq CH^i(\sX)^0$.
\end{prop}

\begin{proof} Let $\alpha\in CH^i(\sX)^0_{\cB,l}$. Then $\alpha$ vanishes in $H^0(K\bar k,R^{2i}f_*\Q_l(i))=H^{2i}(X\otimes_k \bar k,\Q_l(i))$, hence a fortiori in $H^{2i}(X\otimes_K \bar K,\Q_l(i))$: this means that $j^*\alpha$ is $l$-adically homologically equivalent to $0$, hence also numerically equivalent to $0$. This part of the proof works in all characteristics.

We now give the sequel of the proof in characteristic $0$: to oversimplify, it follows by functoriality from the fact that the cycle class map is injective in codimension $0$ (sic). (So this argument is geometrically cheaper than the one for Proposition \ref{p2.4}.) 

We may assume $k$ finitely generated and choose an embedding of $k$ in $\C$. By Artin's comparison theorem, $CH^i(\sX)^0_{\cB,l}=\Ker(CH^i(\sX)\to H^0_B(B_\C,R^{2i}f_*\Q(i))$ where $H_B$ denotes Betti (or analytic) cohomology. Let $b\in B^{(1)}$, and let $Z$, $\iota$, $\beta$ be as in Definition \ref{r2.1}. To show that $\langle \alpha,\beta\rangle_b=0$ in $CH^0(Z)\iso CH^0(Z_\C)$, we may assume $k=\C$ and drop all Tate twists. 

In \cite[Ch. 19]{fulton}, a cycle class map $\cl$ is defined for Chow groups of complex, possibly singular, varieties, with values in their Borel-Moore homology and we have the formula
\begin{equation}\label{eq2.3a}
\cl(\alpha\cdot_\iota \beta) = {\iota'}^*(\cl(\alpha))\cap \cl(\beta)\in H_{2\delta -2}(\sX_Z)
\end{equation}
 \cite[Prop. 19.2]{fulton}, where $\iota'$ is the closed immersion $Z\inj B$ as in the previous proof, hence
 \begin{equation}\label{eq2.3}
 \cl(\langle\alpha,\beta\rangle_b) =(f_Z)_*(\iota^*(\cl(\alpha))\cap \cl(\beta))\in H_{2\delta -2}(Z)
 \end{equation}
 since $\cl$ commutes with push-forwards, by definition and \cite[Lemma 19.1.2]{fulton}. 
 
 It now suffices to show that the right hand side of  \eqref{eq2.3} vanishes since $CH_{\delta -1}(Z)\to H_{2\delta -2}(Z)$ is injective, as one sees by reducing to $Z$ smooth by removing from it a proper closed subset. For this, it suffices to show that the pairing
\begin{equation}\label{eq2.4}
H^{2i}(\sX)\times H_{2\delta -2 + 2i}(\sX_Z)
\to H_{2\delta-2}(Z),
\end{equation}
given by $(x,y)\mapsto (f_Z)_*(\iota^*x\cap y)$, factors through $H^{0}(B,R^{2i}f_*\Q)\times H_{2\delta -2 + 2i}(\sX_Z)$.

We switch by  Poincaré duality from the Borel-Moore homology of $\sX_Z$ (resp. $Z$) to the cohomology of the smooth variety $\sX$ (resp. $B$) with supports in $\sX_Z$ (resp. in $Z$). Then \eqref{eq2.4} becomes the composition
\begin{equation}\label{eq2.5}
H^{2i}(\sX)\times H^{2d +2 - 2i}_{\sX_Z}(\sX)\by{\cap}H^{2d +2}_{\sX_Z}(\sX) \by{f_*} H^{2}_Z(B),
\end{equation}
where $\cap$ is the usual cap-product. The (global) trace map $f_*$ factors as a composition
\[H^{2d+2}_{\sX_Z}(\sX)\to H^0_Z(B,R^{2d+2}f_*\Q)\by{(\Tr_f)_*} H^2_Z(B)
\]
where $\Tr_{f}$ is the local trace map in étale cohomology for the proper morphism $f$. Thus, \eqref{eq2.5} factor through the map
\[
H^{2i}(\sX)\times H^{2d +2 - 2i}_{\sX_Z}(\sX)\to H^0(B,R^{2i}f_*\Q)\times H^0_Z(B,R^{2d+2-i}f_*\Q)
\]
as requested.

In positive characteristic, the leap of faith is that \eqref{eq2.3a} and \eqref{eq2.3} hold for the cycle class maps defined in $l$-adic Borel-Moore homology \cite[\S 6]{laumon}. The commutation with push-forwards causes no problem, and  \eqref{eq2.3a}  indeed appears in \cite[Th. (7.2)]{laumon}, except that the extraordinary cap-product $\cdot_\iota$ (defined in \cite[2.1.1]{verdier} using intersection multiplicities) should be shown to agree with Fulton's. (This is suggested in the Notes and references of \cite[Ch. 19]{fulton}, see also loc. cit., p. 382.)\footnote{Using Olsson's theorem that the cycle class maps commute with refined Gysin homomorphisms \cite[Th. 2.34]{olsson}, it would suffice to show the identity $\gamma_\iota^!(x\times y) =  \iota^* y\cap x$ in Borel-Moore $l$-adic homology.}
 
This being accepted, the same argument goes through.
\end{proof}

\begin{rque}\label{r2.4} 
As a referee pointed out, there is an important conceptual difference between 
$CH^i(\sX)^0_{\cB,l}$ and $CH^i(\sX)^0$: by the smooth and proper base change, we have the equality $\Ker(H^{2i}(\sX_{\bar k},\Q_l(i)) \to H^{2i}(X_{\bar k},\Q_l(i)))=\Ker(H^{2i}(\sX_{\bar k},\Q_l(i)) \to H^0(U_{\bar k},R^{2i}f_*\Q_l(i)))$ for any open subset $U\subseteq B$ over which $f$ is smooth. Thus, the condition $\alpha\in CH^i(\sX)^0_{\cB,l}$ for $\alpha\in CH^i(\sX)$ only has to be checked at the generic fibre and at the ``bad fibres'' of $f$. This contrasts with the case of $CH^i(\sX)^0$, see Remark \ref{r2.2} a). See also Remarks \ref{r6.1} further down.
\end{rque}

\subsection{Global height pairing} The following proposition is the key point of this paper.

\begin{prop}\label{p1} Let $\alpha\in CH^i(\sX)^0$.  If $\beta\in CH^{d+1-i}(\sX)$ and  $j^*\beta=0$, then $f_*(\alpha \cdot \beta)=0$ in $CH^1(B)$.
\end{prop}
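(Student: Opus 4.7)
The strategy is: (i) use $j^*\beta=0$ to write $\beta$ as a pushforward from $\sX_V$ for a proper closed $V\subseteq B$; (ii) strip off all contributions from $\sX_{V'}$ with $\codim_B V'\ge 2$ by iterated use of Lemma \ref{l1}; (iii) what remains is supported on the SNC components $D_\lambda^b$ of the fibres at codimension-one points $b$; (iv) conclude via the projection formula, numerical triviality of $\kappa_\lambda^*\alpha$, and a degree calculation.

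Since $j^*\beta=0$ in $CH^{d+1-i}(X)$, the Chow localisation sequence gives $\beta=(\iota_V)_*\tilde\beta$ for some proper closed $V\subseteq B$ and some $\tilde\beta\in CH_*(\sX_V)$, with $\iota_V\colon \sX_V\inj \sX$. Write $V=V_1\cup V_2$ with $V_1$ the union of the codimension-one components of $V$ and $\codim_B V_2\ge 2$; decompose $\tilde\beta=\tilde\beta_1+\tilde\beta_2$ accordingly, setting $\beta_i=(\iota_{V_i})_*\tilde\beta_i$. Lemma \ref{l1} with $Z=V_2$ and $r=1$ shows $\langle\alpha,\beta_2\rangle=0$, since the pairing factors through $CH^*(\sX\setminus \sX_{V_2})$ where $\beta_2$ restricts to $0$. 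So we may assume $\beta=\sum_{b\in S}(\iota_b)_*\gamma_b$ with $S\subset B^{(1)}$ finite, $\iota_b\colon \sX_{\overline{\{b\}}}\inj \sX$, and $\gamma_b\in CH^{d-i}(\sX_{\overline{\{b\}}})$.

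Fix $b\in S$. The top-dimensional components of $\sX_{\overline{\{b\}}}$ that dominate $\overline{\{b\}}$ are exactly the closures of the SNC components $D_\lambda^b$ of $\sX_b$; any other component has $f$-image of codimension $\ge 2$ in $B$, so a second application of Lemma \ref{l1} allows us to assume $\gamma_b$ is supported on $\bigcup_\lambda D_\lambda^b$. Every prime cycle in $\gamma_b$ then lies in some $D_\lambda^b$, giving $\gamma_b=\sum_\lambda (\tilde\kappa_\lambda^b)_*\gamma_{b,\lambda}$ with $\gamma_{b,\lambda}\in CH^{d-i}(D_\lambda^b)$; hence $(\iota_b)_*\gamma_b=\sum_\lambda (\kappa_\lambda)_*\gamma_{b,\lambda}$, where $\kappa_\lambda\colon D_\lambda^b\inj \sX$ is the inclusion.

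Since $\kappa_\lambda$ is a regular closed immersion of smooth varieties, the projection formula gives
\[ f_*\bigl(\alpha\cdot (\kappa_\lambda)_*\gamma_{b,\lambda}\bigr)=(f\circ\kappa_\lambda)_*\bigl(\kappa_\lambda^*\alpha\cdot \gamma_{b,\lambda}\bigr). \]
By the hypothesis $\alpha\in CH^i(\sX)^0\subseteq CH^i(\sX)^0_b$ we have $\kappa_\lambda^*\alpha\in CH^i_\num(D_\lambda^b)$, hence $\kappa_\lambda^*\alpha\cdot \gamma_{b,\lambda}\in CH^d(D_\lambda^b)$ has zero degree over $k(b)$. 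The composition $f\circ\kappa_\lambda$ factors as $D_\lambda^b\to \overline{\{b\}}\inj B$, and the induced pushforward $CH^d(D_\lambda^b)\to CH^1(B)$ sends $\xi\mapsto \deg_{k(b)}(\xi)\cdot[\overline{\{b\}}]$; each summand therefore vanishes and $f_*(\alpha\cdot\beta)=0$. The main obstacle is the decomposition in step three: ensuring $\gamma_b$ is supported on closures of the $D_\lambda^b$ after discarding codimension-$\ge 2$ contributions. This is the only step that genuinely uses semi-stability at $b$, and it is handled by noting that any extra component of $\sX_{\overline{\{b\}}}$ maps to a proper closed subset of $\overline{\{b\}}$, hence lies over a codimension $\ge 2$ subset of $B$ to which Lemma \ref{l1} again applies.
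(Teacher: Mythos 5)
Your overall strategy coincides with the paper's: localise $\beta$ to $\sX_Z$, discard everything lying over codimension $\ge 2$ via Lemma \ref{l1}, reduce to cycles on the components $D_\lambda$ of the fibres over codimension-one points, and finish with the projection formula, numerical triviality of $\kappa_\lambda^*\alpha$, and a degree count in $CH^0(Z)\cong CH^0(b)$. The reductions in your first three paragraphs are fine.

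The gap is in the fourth paragraph, at the assertion that ``$\kappa_\lambda$ is a regular closed immersion of smooth varieties.'' You are conflating the fibre component $D_\lambda^b\subset \sX_b$ (which semi-stability makes smooth over $k(b)$) with its closure $D_\lambda\subset\sX_{\overline{\{b\}}}$, which is what you actually push forward from. Semi-stability is a condition on the fibre over the \emph{generic} point $b$ of $Z=\overline{\{b\}}$ only; the closure $D_\lambda$ can perfectly well be singular (and $\kappa_\lambda$ fail to be regular) over points of $Z$ of positive codimension. Consequently neither the pullback $\kappa_\lambda^*\alpha$ nor the intersection product $\kappa_\lambda^*\alpha\cdot\gamma_{b,\lambda}$ on $D_\lambda$ is defined as you use them, and the naive projection formula you invoke is not available. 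This is precisely the point where the paper switches to Fulton's refined machinery: it uses the cap product $x\cdot_{i_\lambda}y=\gamma_{i_\lambda}^!(y\times x)$ attached to the regular embedding $D_\lambda\inj D_\lambda\times_k\sX$ (which only needs $\sX$ smooth), applies the refined projection formula \cite[Th.~6.2~(a)]{fulton} to land in $CH_{\delta-1}(D_\lambda)$, pushes down to $CH^0(Z)$, and \emph{only then} restricts to the generic point of $Z$, where $D_\lambda^b$ is smooth and $\cdot_{i_\lambda}$ becomes the honest intersection product against the numerically trivial class $\kappa_\lambda^*\alpha$. Your argument can alternatively be repaired without refined products by first deleting from $B$ a closed subset $W$ of codimension $\ge 2$ chosen so that all the (finitely many) $D_\lambda$ and $Z$ involved become smooth over $k$ away from $\sX_W$ — this is harmless for the conclusion by Lemma \ref{l1} and the isomorphism $CH^1(B)\iso CH^1(B-W)$ — but as written the smoothness claim is unjustified and the step does not go through.
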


\begin{proof} By \cite[Prop. 1.8]{fulton}, write $\beta = \iota_* \beta'$ with $\beta'\in CH_{\delta+i-1}(\sX_Z)$ for some proper closed subset $Z\subset B$, where $\iota:\sX_Z\inj \sX$ is the inclusion.  We may assume that $\beta'$ is the class of an irreducible cycle, hence take $Z$ irreducible. If $\codim_B Z>1$, the result follows from Lemma \ref{l1}. 
If $Z=\overline{\{b\}}$ for $b\in B^{(1)}$, the conclusion follows from \eqref{eq6}.
\end{proof}

The proof of the following lemma is in the same spirit, so we include it here. It will be used in the proof of Proposition \ref{p5} (ii).

\begin{lemma} \label{l2.4} Let $b_1,\dots,b_n$ be a finite set of points on $B^{(1)}$ and let $Z=\overline{\{b_1,\dots, b_n\}}$. Then one has $(f_Z)_*(\alpha\cdot_\iota\beta)=0$ for any $\alpha\in CH^i(\sX)^0$ and any $\beta\in CH_{\delta+i-1}(\sX_Z)$, where $\iota$ is the closed immersion $\sX_Z\inj \sX$.
\end{lemma}

\begin{proof} We may asume that $\beta$ is the class of an irreducible cycle $\beta'$; then $\beta'$ is supported on $\sX_{Z_r}$ for some $r$, where $Z_r = \overline{\{b_r\}}$. Let $\kappa:\sX_{Z_r}\inj \sX_Z$ be the corresponding closed immersion, and let $\iota_r=\iota\kappa$:  by applying again Lemma \ref{l2.5} to the obvious Cartesian square involving $\kappa$, we get the identity
\[\alpha\cdot_\iota \kappa_*\beta'=\kappa_*(\alpha\cdot_{\iota_r} \beta')\]
etc.
 \end{proof}

\begin{defn}\label{d2.1} Let $CH^i(X)^0_f$ be the image of $CH^i(\sX)^0$ in $CH^i(X)$. By Proposition \ref{p1}, \eqref{eq2} induces a pairing
\[CH^i(\sX)^0\times CH^{d+1-i}(X)\to CH^{1}(B)\]
hence, swapping $i$ with $d+1-i$, a ``height'' pairing
\begin{equation}\label{eq4}
\langle,\rangle_f:CH^i(X)^0_f\times CH^{d+1-i}(X)^0_f\to CH^{1}(B).
\end{equation}
\end{defn}

We shall see in the next section (Propositions \ref{l6} and \ref{r4}) that neither $CH^i(X)^0_f$ nor $\langle,\rangle_f$ depends in the choice of $f$.

\subsection{Comparison with the pairing of Rössler-Szamuely}

\begin{prop}\label{r2.5} The pairing \eqref{eq4} is compatible with the pairing \eqref{eq01} of the introduction on the subgroups $CH^i(\sX)^0_{\cB,l}$ and $CH^{d+1-i}(\sX)^0_{\cB,l}$ of Proposition \ref{p2.1}.
\end{prop}

\begin{proof}
Using cup-product and push-forward in $l$-adic cohomology:
\begin{multline}\label{eq2.7}
 H^{2i}(\sX_{\bar k},\Q_l(i))\otimes H^{2(d+1-i)}(\sX_{\bar k},\Q_l(d+1-i))\\\by{\cup} H^{2(d+1)}(\sX_{\bar k},\Q_l(d+1))
 \by{f_*} H^2(B_{\bar k},\Q_l(1))
 \end{multline}
we get from \eqref{eq2.6} a pairing
\begin{equation}\label{eq2.8}
CH^i(\sX)\otimes CH^{d+1-i}(\sX)\to H^2(B_{\bar k},\Q_l(1))
\end{equation}
which is evidently compatible with \eqref{eq2} (for $r=1$). On the other hand, the Leray spectral sequence
\begin{equation}\label{eq2.13}
H^r(B_{\bar k},R^sf_*\Q_l(i))\Rightarrow H^{r+s}(\sX_{\bar k},\Q_l(i))
\end{equation}
 yields Abel-Jacobi maps
\begin{equation}\label{eq2.11}
\AJ^i_B:CH^i(\sX)^0_{\cB,l}\to   H^1(B_{\bar k},R^{2i-1}f_*\Q_l(i)).
\end{equation}

We have a pairing parallel to \eqref{eq2.7}:
\begin{multline}\label{eq2.12}
H^1(B_{\bar k},R^{2i-1}f_*\Q_l(i))\otimes H^1(B_{\bar k},R^{2(d-i+1)-1}f_*\Q_l(d-i+1))\\\by{\cup} H^2(B_{\bar k},R^{2d}f_*\Q_l(i)))
 \by{\Tr_f} H^2(B_{\bar k},\Q_l(1))
 \end{multline}
which is compatible with the former via \eqref{eq2.13}. This implies that the restriction of \eqref{eq2.8} to $CH^i(\sX)^0_{\cB,l}\otimes CH^{d+1-i}(\sX)^0_{\cB,l}$ is \emph{compatible with} \eqref{eq4} via Proposition \ref{p2.1}, i.e. that the diagram
\begin{equation}\label{eq2.9}
\begin{CD}
CH^i(\sX)^0_{\cB,l}\otimes CH^{d+1-i}(\sX)^0_{\cB,l}@>>> H^2(B_{\bar k},\Q_l(1))\\
@VVV @AAA\\
CH^i(\sX)^0\otimes CH^{d+1-i}(\sX)^0@>>> CH^1(B)
\end{CD}
\end{equation} 
commutes. 

On the other hand, the height pairing of \cite{RS} is defined on $CH^i_l(X)\otimes CH^{d+1-i}_l(X)$, also with values in $H^2(B_{\bar k},\Q_l(1))$. More precisely, by loc. cit., Th. Prop. 2.3, if $\alpha\in CH^i_l(X)$, $j:U\inj B$ is an open subset over which $f$ is smooth and $\alpha_U$ is a lift of $\alpha$ to $CH^i(\sX_U)$, then $\AJ^i_U(\alpha_U)\in H^1(U_{\bar k},R^{2i-1}(f_U)_*\Q_l(i))$ lies in the subgroup $H^{1-\delta}(B_{\bar k},j_{!*}R^{2i-1}(f_U)_*\Q_l(i))$ (loc. cit., Prop. 2.1), and the height pairing of Rössler and Szamely is defined by \eqref{eq2.12} on these subgroups. Let $\sF=R^{2i-1}(f_U)_*\Q_l(i)\break =j^*R^{2i-1}f_*\Q_l(i)$. Since $j^*j_{!*} \sF=\sF$ \cite[Rem. 1.4.14.1]{bbd}, the image of $H^1(B_{\bar k},R^{2i-1}f_*\Q_l(i))$ in $H^1(U_{\bar k},R^{2i-1}(f_U)_*\Q_l(i))$ is contained in \break $H^{1-\delta}(B_{\bar k},j_{!*}R^{2i-1}(f_U)_*\Q_l(i))$.
\end{proof}

\section{Independence from the (smooth) model}

\subsection{Review of the Corti-Hanamura category}\label{s2.2}

A morphism $f:\sX\to B$ as in \S \ref{s1.1} defines an object in the Corti-Hanamura category $\CHC(B)$ of  \cite[Def. 2.8]{ch}\footnote{Except that $f$ is assumed projective in \cite{ch}; proper is sufficient to apply its formalism.}. Given two such objects $f_i:\sX_i\to B$ ($i=1,2$), morphisms in $\CHC(B)$ are defined by relative correspondences:
\[\CHC(B)(\sX_1,\sX_2)= CH_{\dim \sX_2}(\sX_1\times_B \sX_2)=CH^{\dim X_1}(\sX_1\times_B \sX_2)\]
where $X_1$ is the generic fibre of $\sX_1$.

If $f_3:\sX_3\to B$ is a third object, the composition of two such correspondences $u:\sX_1\to \sX_2$ and $v:\sX_2\to \sX_3$ is defined as
\begin{equation}\label{eq03}
v\bullet u = (p^{1,2,3}_{1,3})_*\delta_2^!(u\times_k v) 
\end{equation} 
where $\delta^!$ is the refined Gysin morphism from \cite[\S 6.2]{fulton} associated to the (regular immersion) diagonal $\delta_2:\sX_2\to \sX_2\times_k \sX_2$ in the (augmented) Cartesian square
\begin{equation}\label{eq02}\Small
\begin{CD}
(\sX_1\times_B \sX_3) @<p^{1,2,3}_{1,3}<< \sX_1\times_B\sX_2\times_B \sX_3 @>\Delta>>  (\sX_1\times_B \sX_2)\times_k (\sX_2\times_B \sX_3)\\
&&@VVp^{1,2,3}_{2} V @VV p^{1,2}_{2} \times p^{2,3}_{1}V\\
&& \sX_2@>\delta_2>> \sX_2\times_k \sX_2 
\end{CD}
\end{equation}
and the notation for the projections is self-evident.

As usual, one can generalise this to ``graded correspondences''
\[\CHC(B)_r(\sX_1,\sX_2)= CH_{\dim \sX_2-r}(\sX_1\times_B \sX_2)= CH^{\dim X_1+r}(\sX_1\times_B \sX_2)\] 
and reduce these graded correspondences to ordinary ones if one wishes, by using the projective bundle formula \cite[Th. 3.3 (b)]{fulton}.

Since $\Delta$ is also a regular immersion of the same codimension as $\delta$ (namely, $\dim \sX_2$), we may apply Lemma \ref{l2.5} b) which gives 
\begin{equation}\label{eq2.10}
\delta_2^!(v\times_k u)=\Delta^!(v\times_k u).
\end{equation} 

If the $f_i$ are smooth, we also have a ``classical'' composition of correspondences à la Deninger-Murre \cite{den-murre}:
\[v\circ u = (p^{1,2,3}_{1,3})_*\big((p^{1,2,3}_{2,3})^*v\cdot (p^{1,2,3}_{1,2})^*u\big).\]

\begin{lemma}\label{l2.1} a) In the above case, $v\circ u = v\bullet u$.\\
b) The category $\CHC(B)$ is contravariant for smooth $k$-morphisms $\phi:C\to B$.\\
c) The pro-open immersion $j$ defines a functor to the category of Chow correspondences over $K$ from the full subcategory of $\CHC(B)$ consisting of those $f:\sX\to B$ whose generic fibre is smooth. 
\end{lemma}

\begin{proof} a) We use \eqref{eq2.10}. We have the Cartesian square
\[\begin{CD}
\sX_1\times_B\sX_2\times_B \sX_3 @>\Delta_1>> (\sX_1\times_B\sX_2\times_B \sX_3)\times_k  (\sX_1\times_B\sX_2\times_B \sX_3) \\
@V||VV @VV p^{1,2,3}_{2,3}\times p^{1,2,3}_{1,2} V\\
\sX_1\times_B\sX_2\times_B \sX_3 @>\Delta>> (\sX_2\times_B \sX_3)\times_k (\sX_1\times_B \sX_2)
\end{CD}\]
in which all morphisms are l.c.i. morphisms, hence
\[\Delta^!(v\times_k u)=\Delta_1^!(p^{1,2,3}_{2,3}\times p^{1,2,3}_{1,2})^!(v\times_k u)\]
by Lemma \ref{l2.5} c), 
\[(p^{1,2,3}_{2,3}\times p^{1,2,3}_{1,2})^!(v\times_k u)=(p^{1,2,3}_{2,3}\times p^{1,2,3}_{1,2})^*(v\times_k u) = (p^{1,2,3}_{2,3})^*v \times_k (p^{1,2,3}_{1,2})^*u\]
by \cite[Prop. 6.6 (b)]{fulton}, and finally
\[\Delta_1^!\big((p^{1,2,3}_{2,3})^*v \times_k (p^{1,2,3}_{1,2})^*u\big)= (p^{1,2,3}_{2,3})^*v \cdot (p^{1,2,3}_{1,2})^*u\]
by definition of the intersection product on smooth varieties \cite[p. 131]{fulton}.

In b), the statement means that $\phi$ defines a functor $\phi^*:\CHC(B)\allowbreak\to \CHC(C)$, given by fibre product. It is defined on objects by the smoothness of $\phi$, and on morphisms because smooth morphisms are flat. To check that it respects composition involves chasing in the Cartesian cube obtained by pulling back the square of $B$-schemes \eqref{eq02} along the morphism $C\times_B C\to B$, and then further pulling back along the diagonal $\delta':C\to C\times_B C$; this latter operation is unnecessary if $C$ is an open subset of $B$. The first step involves \cite[Prop. 6.6]{fulton} as in the proof of a), to take care of the flat l.c.i morphisms $C\times_B(\sX_i\times_B \sX_j)\to \sX_i\times_B \sX_j$; the second step uses the fact that $\delta'$ is a regular immersion.

c) follows from a), b) and  \cite[Lemma IA.1]{bloch2}, since $U\times_B\sX$ is smooth over $U$ for a suitable open subset $U$ of $B$ for $\sX$ as in the statement.
\end{proof}

\begin{rque} The associativity of the composition $\bullet$ is not proven in \cite{ch}. It will not be used here and is left to the reader. See nevertheless Remark \ref{r11}.
\end{rque}

As a special case of \eqref{eq03}, take $\sX_3=B$: we get pairings
\begin{gather*}
CH^{\dim X_2+r}(\sX_1\times_B \sX_2)\otimes CH^i(\sX_2)\to CH^{i+r}(\sX_1)\\
(\psi,\alpha)\mapsto \psi^*\alpha:= (p_{1})_*\delta_2^!( \psi\times_k\alpha)
\end{gather*}
compatible via $j^*$ with the usual action of correspondences over $K$, by Lemma \ref{l2.1} c). For clarity, we repeat \eqref{eq03} in this special case:
\begin{equation}\label{eq18}
\begin{CD}
\sX_1@<p_1<< \sX_1\times_B \sX_2@>\gamma_{p_2}>> (\sX_1\times_B \sX_2)\times_k \sX_2\\
&&@Vp_2 VV @V p_2\times 1VV\\
&&\sX_2@>\delta_2>> \sX_2\times_k \sX_2
\end{CD}
\end{equation}
where $\gamma_{p_2}$ is the graph of $p_2:=p_2^{1,2}$.

We also write $\psi_*$ for $(^t\psi)^*$.

As an even more special case, when $\sX_1=B$: writing $\beta$ rather than $\psi$, we recover the pairing \eqref{eq2}
\begin{equation}\label{eq34}
\langle \alpha,\beta\rangle=(f_2)_*(\alpha\cdot \beta)=(f_2)_*\delta_2^!(\alpha\times_k \beta)=\beta^*\alpha\in CH^*(B).
\end{equation}

\begin{lemma}\label{l3.3} Let $(\alpha,\beta)\in CH^i(\sX_1)\times CH^{d_1-i+1}(\sX_2)$ and $\psi\in CH^{d_2}(\sX_1\times_B \sX_2)$. Then
\[\langle \psi^*\alpha, \beta\rangle=\langle \alpha,\psi_*\beta\rangle.\]
\end{lemma}

\begin{proof} For clarity, write $\delta_i$ for the diagonal map $\sX_i\to \sX_i\times_k \sX_i$. As in the proof of Proposition \ref{l6}, let $p_i$ be the projection $\sX_1\times_B \sX_2\to \sX_i$. Developing, the identity to be proven is
\begin{equation}\label{eq19bis}
(f_1)_*((p_1)_*\delta_2^!(\psi \times  \alpha)\cdot  \beta)  = (f_2)_*( \alpha\cdot (p_2)_*\delta_1^!(^t\psi \times  \beta)).
\end{equation}

Let $\lambda =\delta_2^!(\psi \times  \alpha)$. We have
\[(p_1)_*\lambda \cdot  \beta = \delta_1^!((p_1)_*\lambda \times  \beta) =  \delta_1^!(p_1\times 1)_*(\lambda\times \beta)= (p_1)_* \delta_1^!(\lambda\times \beta)\]
by Lemma \ref{l2.5} a). Similarly, if $\lambda'=\delta_1^!(^t\psi \times  \beta)$ and $\lambda'':=\delta_1^!(\psi \times  \beta)$:
\[  \alpha\cdot (p_2)_*\lambda' = (p_2)_* \delta_2^!(\alpha\times \lambda')=(p_2)_* \delta_2^!( \lambda''\times\alpha).\]

Since $f_1p_1=f_2p_2$, to show \eqref{eq19bis} it suffices to show that
\[ \delta_1^!(\lambda\times \beta)= \delta_2^!(\lambda''\times\alpha).\]

We now observe that since $\sX_2$ is smooth, $\gamma_{p_2}$ is also a regular embedding in \eqref{eq18}, hence $\delta_2^!=\gamma_{p_2}^*$ (non-refined Gysin map) by Lemma \ref{l2.5} b) (see also \eqref{eq2.10}); similarly, $\delta_1^!=\gamma_{p_1}^*$. The expression $\gamma_{p_i}^*(x\times y)$ is also written $x\cdot_{p_i} y$ in \cite[Def. 8.1.1]{fulton} (cf. proof of Proposition \ref{p1}). The formula to be proven therefore becomes
\[ (\psi\cdot_{p_2}  \alpha)\cdot_{p_1}  \beta = (\psi\cdot_{p_1}  \beta)\cdot_{p_2}  \alpha\]
which is \cite[Prop. 8.1.1 (b)]{fulton}.
\end{proof}

\begin{rque}\label{r11} There is a much more conceptual proof by interpreting both sides as compositions of correspondences:
 we then have
\[
\langle\psi^*\alpha,  \beta\rangle=(\alpha\bullet \psi)\bullet \beta=\alpha\bullet (\psi\bullet \beta)=\langle\alpha,\psi_*\beta\rangle
\]
by the associativity of $\bullet$.
\end{rque}

\subsection{Independence from the model and functoriality}

\begin{lemma}\label{l3.4} Let $b\in B^{(1)}$ and $Z=\overline{\{b\}}$ as usual. For $\psi\in CH^*(\sX_1\times_B \sX_2)$ and $\beta\in CH_*(\sX_{1,Z})$, let
\[\psi_! \beta = (p_{2,Z})_*\delta_1^!(\psi\times \beta)\in CH_*(\sX_{2,Z}).\]
Then\\
a) $(\iota_2)_*\psi_! \beta = \psi_* ((\iota_1)_*\beta)$.\\
b) For any $\alpha\in CH^i(\sX_2)$, $\beta\in CH_{\delta+i-1}(\sX_Z)$ and $\psi\in CH^{d_2}(\sX_1\times_B \sX_2)$, we have $\langle \alpha, \psi_! \beta\rangle_b = \langle \psi^*\alpha,\beta\rangle_b$.
\end{lemma}

\begin{proof} a) Let us first draw the diagram of Cartesian squares underlying the coming computation:
\begin{equation}\label{eq18b}
\begin{CD}
\sX_{2,Z} @<p_{2,Z}<< \sX_{1,Z}\times_Z \sX_{2,Z}@>(\kappa,p_{1,Z})>> (\sX_1\times_B \sX_2)\times \sX_{1,Z}\\
@V\iota_2 VV @V\kappa VV @V 1\times \iota_1VV\\
\sX_2 @<p_2<<\sX_1\times_B \sX_2@>\gamma_{p_1}>> (\sX_1\times_B \sX_2)\times \sX_1\\
&&@Vp_1 VV @V p_1\times 1VV\\
&&\sX_1@>\delta_1>> \sX_1\times \sX_1.
\end{CD}
\end{equation}

It already explains the use of $\delta_1^!$ in the definition of $\psi_!$. Now
\begin{multline*}
(\iota_2)_*\psi_! \beta =(\iota_2)_*(p_{2,Z})_*\delta_1^!(\psi\times \beta)=(p_2)_*\kappa_*\delta_1^!(\psi\times \beta)\\
=(p_2)_*\delta_1^!(1\times\iota_1)_*(\psi\times \beta) =(p_2)_*\delta_1^!(\psi\times (\iota_1)_*\beta) \\
=:({}^t\psi)^* ((\iota_1)_*\beta)=: \psi_* ((\iota_1)_*\beta)
\end{multline*}
where the third equality follows as usual from Lemma \ref{l2.5} a).

b) First
\begin{multline*}
\alpha\cdot_{\iota_2} \psi_! \beta := \gamma_{\iota_2}^!((p_{2,Z})_*\delta_1^!(\psi\times\beta)\times \alpha)\overset{(a)}{=} \gamma_{\iota_2}^!((p_{2,Z})_*\gamma_{p_1}^!(\psi\times\beta)\times \alpha)\\
\overset{(b)}{=}(p_{2,Z})_*\gamma_{\iota_2}^!(\gamma_{p_1}\times 1)^!(\psi\times\beta\times \alpha)\\
\overset{(c)}{=}(p_{2,Z})_*\gamma_{p_2}^!(\gamma_{p_1}\times 1)^!(\psi\times\beta\times \alpha)\overset{(d)}{=}(p_{2,Z})_*\gamma_\lambda^!(\psi\times\beta\times \alpha)
\end{multline*}
where $\lambda$ is the regular embedding $\sX_1\times_B \sX_2\inj \sX_1\times \sX_2$, so that $\gamma_\lambda$ is the composition of the bottom row in the diagram of Cartesian squares
\begin{equation}\label{eq18c}\tiny
\begin{CD}
 \sX_{2,Z}@>\gamma_{\iota_2}>> \sX_{2,Z}\times \sX_2\\
 @Ap_{2,Z}AA @Ap_{2,Z}\times 1AA\\
 \sX_{1,Z}\times_Z \sX_{2,Z}@>\gamma_{p_{2,Z}\iota_2}>>(\sX_{1,Z}\times_Z \sX_{2,Z})\times \sX_2@>(\kappa,p_{1,Z})\times 1>> (\sX_1\times_B \sX_2)\times \sX_{1,Z}\times \sX_2\\
@V\kappa VV @V\kappa\times 1 VV @V 1\times \iota_1\times 1VV\\
\sX_1\times_B \sX_2@>\gamma_{p_2}>>(\sX_1\times_B \sX_2)\times \sX_2@>\gamma_{p_1}\times 1>> (\sX_1\times_B \sX_2)\times \sX_1\times \sX_2.
\end{CD}
\end{equation}

Here (a) follows from Lemma \ref{l2.5} b) applied to \eqref{eq18b}, (b) from Lemma \ref{l2.5} a), (c) from Lemma \ref{l2.5} b) again (applied twice), and (d) from Lemma \ref{l2.5} c).

Next
\begin{multline*}
\psi^*\alpha\cdot_{\iota_1} \beta:=\gamma_{\iota_1}^!(\beta\times (p_1)_*\delta_2^!(\psi \times \alpha))\overset{(a)}{=} \gamma_{\iota_1}^!(\beta\times (p_1)_*\gamma_{p_2}^!(\psi \times \alpha))\\
\overset{(b)}{=}(p_{1,Z})_*\gamma_{\iota_1}^!(1\times \gamma_{p_2})^!(\beta\times\psi \times \alpha)\\
\end{multline*}
where (a) follows from Lemma \ref{l2.5} b) applied to \eqref{eq18} and (b) follows from Lemma \ref{l2.5} a) applied to the Cartesian square
\[\begin{CD}
\sX_{1,Z}\times_Z \sX_{2,Z}@>(p_{1,Z},\kappa)>>\sX_{1,Z}\times (\sX_1\times_B \sX_2)\\
@Vp_{1,Z}VV @V1\times p_1VV\\
\sX_{1,Z}@>\gamma_{\iota_1}>> \sX_{1,Z}\times \sX_1.
\end{CD}\]

Since $f_{1,Z}p_{1,Z}=f_{2,Z}p_{2,Z}$, we are left to prove the equality
\[\gamma_\lambda^!(\psi\times\beta\times \alpha)=\gamma_{\iota_1}^!(1\times \gamma_{p_2})^!(\beta\times\psi \times \alpha).\]

For this we draw the diagram of Cartesian squares, similar to \eqref{eq18c}:
\begin{equation*}\label{eq18d}\Small
\begin{CD}
 \sX_{1,Z}@>^t \gamma_{\iota_1}>> \sX_1\times \sX_{1,Z}\\
 @Ap_{1,Z}AA @Ap_1\times 1AA\\
 \sX_{1,Z}\times_Z \sX_{2,Z}@>(\kappa,p_{1,Z})>>(\sX_{1}\times_B \sX_{2})\times \sX_{1,Z}@>\gamma_{p_2}\times 1_{\sX_{1,Z}}>> (\sX_1\times_B \sX_2)\times \sX_2\times \sX_{1,Z}\\
@V\kappa VV @V1\times \iota_1 VV @V 1\times 1\times \iota_1VV\\
\sX_1\times_B \sX_2@>\gamma_{p_1}>>(\sX_1\times_B \sX_2)\times \sX_1@>\gamma_{p_2}\times 1_{\sX_1}>> (\sX_1\times_B \sX_2)\times \sX_2\times \sX_1.
\end{CD}
\end{equation*}

Here the composition of the bottom row is $\gamma_\lambda$, up to permuting $\sX_1$ and $\sX_2$. By Lemma \ref{l2.5} b), 
$(^t \gamma_{\iota_1})^!$ and $\gamma_{p_1}^!$ both compute the refined Gysin map corresponding to the arrow $(\kappa,p_{1,Z})$, and also $(\gamma_{p_2}\times 1_{\sX_{1,Z}})^!=(\gamma_{p_2}\times 1_{\sX_{1}})^!$; we conclude by applying Lemma \ref{l2.5} c) to the bottom row once again.
\end{proof}

\begin{prop}\label{l6} Let $f_1:\sX_1\to B$, $f_2:\sX_2\to B$ be two proper morphisms with generic fibres $X_1,X_2$ of dimensions $d_1,d_2$, where $\sX_1$ and $\sX_2$ are smooth; let $r\in \Z$ and let $\gamma\in CH^{d_2+r}(X_1\times_K X_2)$ be a Chow correspondence of degree $r$. Then 
\begin{equation}\label{eq16a}
\gamma^*CH^i(X_2)^0_{f_2}\subseteq CH^{i+r}(X_1)^0_{f_1}
\end{equation} 
for any $i\ge 0$. In particular,
\begin{thlist}
\item if $r=0$, we also have $\gamma_*CH_i(X_1)^0_{f_2}\subseteq CH_i(X_2)^0_{f_1}$;
\item the group $CH^i(X)^0_f$ does not depend on $f$.
\end{thlist}
\end{prop}

\begin{proof} 
First, (i) (resp. (ii)) follows from \eqref{eq16a} by considering ${}^t\gamma$ (resp. by taking $X_1=X_2=X$, $\gamma=\Delta_X$). To prove \eqref{eq16a}, we may assume that $\gamma$ is the class of an integral cycle $\Gamma\subset X_1\times_K X_2$. 

Let $j_i:X_i\inj \sX_i$ be the corresponding immersions, and $\psi$ be the closure of $\Gamma$ in $\sX_1\times_B \sX_2$. 
By Lemma \ref{l2.1} c), 
\begin{equation}\label{eq15a}
\gamma^*\circ j_2^*=j_1^*\circ \psi^*, 
\end{equation} 
and it suffices to show that $\psi^*\alpha\in CH^{i+r}(\sX_1)^0$ for any $\alpha\in CH^i(\sX_2)^0$.  Formula \eqref{eq15a} shows that  $j_1^*(\psi^*\alpha)\in CH^{i+r}_\num(X_1)$; the other condition follows from Lemma \ref{l3.4} b).
\end{proof}

\begin{rque} If $B$ is projective, Lemma \ref{l3.4} a) is sufficient for the proof of Proposition \ref{l6} by using \eqref{eq6}, as in the proof of Proposition \ref{p2.4}.
\end{rque}

\begin{prop}\label{r4} The pairing \eqref{eq4} does not depend on the choice of  $f$ (we drop $f$ from its notation from now on). Moreover, in the situation of Proposition \ref{l6} with $r=0$, we have the identity
\begin{equation}\label{eq19a}
\langle \gamma^*\alpha,\beta\rangle = \langle \alpha,\gamma_* \beta\rangle
\end{equation}
for $(\alpha,\beta)\in CH^i(X_2)^0\times CH_{i-1}(X_1)^0$.
\end{prop}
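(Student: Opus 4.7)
Part (i) will follow from part (ii) by specialization: take $X_1=X_2=X$ with two semi-stable models $\sX_1,\sX_2$ and $\gamma=[\Delta_X]\in CH^d(X\times_K X)$, which has degree $r=0$. Since the diagonal correspondence acts as the identity on Chow groups, $\gamma^*\alpha=\alpha$ and $\gamma_*\beta=\beta$, and \eqref{eq19a} reads $\langle\alpha,\beta\rangle_{\sX_1}=\langle\alpha,\beta\rangle_{\sX_2}$, which is the required independence.

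For part (ii), retain the notation of the proof of Proposition \ref{l6}: let $\psi\subset W:=\sX_1\times_B\sX_2$ be the closure of an integral cycle representing $\gamma$, let $p_i\colon W\to\sX_i$ be the projections, and set $q:=f_1\circ p_1=f_2\circ p_2\colon W\to B$. Choose admissible lifts $\tilde\alpha\in CH^i(\sX_2)^0$ of $\alpha$ and $\tilde\beta\in CH_{i-1}(\sX_1)^0$ of $\beta$. By Proposition \ref{l6} (applied to $\gamma$ and to ${}^t\gamma$), $\psi^*\tilde\alpha$ and $\psi_*\tilde\beta$ are admissible lifts of $\gamma^*\alpha$ and $\gamma_*\beta$ respectively, so Proposition \ref{p1} gives
\[\langle\gamma^*\alpha,\beta\rangle=(f_1)_*\bigl((\psi^*\tilde\alpha)\cdot\tilde\beta\bigr),\qquad \langle\alpha,\gamma_*\beta\rangle=(f_2)_*\bigl(\tilde\alpha\cdot(\psi_*\tilde\beta)\bigr).\]

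The plan is then to exhibit a single class $c\in CH_*(W)$ whose $q_*$-pushforward equals both sides. Using formula \eqref{eq17} for $\psi^*$, the transpose analogue of \eqref{eq17} for $\psi_*$, and Fulton's projection formula \cite[Thm.~6.2(a)]{fulton} for the proper maps $p_1,p_2$, one rewrites
\[(\psi^*\tilde\alpha)\cdot\tilde\beta=(p_1)_*\bigl(\delta_2^!(\psi\times\tilde\alpha)\cdot p_1^*\tilde\beta\bigr),\qquad \tilde\alpha\cdot(\psi_*\tilde\beta)=(p_2)_*\bigl(p_2^*\tilde\alpha\cdot\delta_1^!(\tilde\beta\times\psi)\bigr),\]
where $\delta_i\colon\sX_i\to\sX_i\times_k\sX_i$ is the (regular) diagonal embedding of the smooth scheme $\sX_i$. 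The desired class is the common value
\[c=\delta_2^!(\psi\times\tilde\alpha)\cdot p_1^*\tilde\beta=p_2^*\tilde\alpha\cdot\delta_1^!(\tilde\beta\times\psi)\in CH_*(W),\]
after which $q_*(c)$ computes both sides via $q=f_1p_1=f_2p_2$ and the projection formulas for $p_1,p_2$.

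The main obstacle is justifying the last equality, because $W$ itself is generally not smooth. However, since $B$ is smooth, $W$ is regularly embedded in the smooth scheme $\sX_1\times_k\sX_2$ as the pullback of the diagonal $B\inj B\times_k B$. Working in this ambient smooth scheme, both expressions for $c$ represent the same ``refined triple intersection'' of $\psi$, $p_2^*\tilde\alpha$ and $p_1^*\tilde\beta$ on $W$, and the equality reduces, via the commutativity and associativity of refined Gysin maps \cite[Thm.~6.4]{fulton} applied to the regular embeddings $\delta_1,\delta_2$, to a standard identity of refined intersection theory in the same spirit as the diagram-chase at the end of the proof of Proposition \ref{l6}. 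Applying $q_*$ to the equality of the two expressions for $c$ then yields \eqref{eq19a}.
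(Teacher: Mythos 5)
Your proposal is correct and follows essentially the same route as the paper: both sides are rewritten via \eqref{eq17} and the projection formula as pushforwards from $\sX_1\times_B\sX_2$ of a refined triple intersection of $\psi$, $\tilde\alpha$, $\tilde\beta$, and the key identity is the commutativity of Fulton's refined products $\cdot_{p_1}$, $\cdot_{p_2}$ (the paper quotes \cite[Prop.~8.1.1~(b)]{fulton}, which is the packaged form of the commutativity of refined Gysin maps you invoke); your expressions $\delta_2^!(\psi\times\tilde\alpha)\cdot p_1^*\tilde\beta$ must indeed be read as the refined products $(\psi\cdot_{p_2}\tilde\alpha)\cdot_{p_1}\tilde\beta$ rather than literal products on the non-smooth $W$, exactly as you note. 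Your deduction of model-independence by specializing \eqref{eq19a} to $\gamma=\Delta_X$ is the same as the paper's, which phrases it via $\psi_*\psi^*=1$.
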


\begin{proof}  As in the proof of Proposition \ref{l6}, the first claim follows from the second by taking $X_1=X_2=X$, $\gamma=\Delta_X$. For the second claim, we take $\gamma$ and $\psi$ as in the proof of Proposition \ref{l6}. Then \eqref{eq19a} follows from  Lemma \ref{l3.3} applied to  lifts $\tilde\alpha$ and $\tilde\beta$ of $\alpha$ and $\beta$ in $CH^i(\sX_2)^0$  and $CH^{d_1-i+1}(\sX_1)^0$ respectively.
\end{proof}

\subsection{Base change} 

\begin{prop}\label{p5} Consider a commutative diagram
\begin{equation}\label{eq10}
\begin{CD}
\sX_1@>g>>\sX_2\\
@V{f_1}VV @V{f_2}VV\\
B_1@>\bar g>> B_2
\end{CD}
\end{equation}
where $f_1,f_2$ satisfy the hypotheses of \S \ref{s1.1}, $\bar g$ is finite surjective and $g$ proper; we assume that the diagram of generic fibres:
\[\begin{CD}
X_1@>g'>>X_2\\
@V{f'_1}VV @V{f'_2}VV\\
\eta_1@>\bar g'>> \eta_2
\end{CD}\]
is Cartesian (in particular, $g$ is generically finite). Then, for all $i\ge 0$, one has 
\begin{thlist}
\item $g^*CH^i(\sX_2)^0\subseteq CH^i(\sX_1)^0$, hence ${g'}^*CH^i(X_2)^0\allowbreak\subseteq CH^i(X_1)^0$.
\item $g_*CH^i(\sX_1)^0\subseteq CH^i(\sX_2)^0$, hence $g'_*CH^i(X_1)^0\allowbreak\subseteq CH^i(X_2)^0$. 
\item $(g^*)^{-1}CH^i(\sX_1)^0=CH^i(\sX_2)^0$.
\item One has the identities 
\begin{align}
\bar g_*\langle {g'}^*\alpha,\beta'\rangle &= \langle \alpha,g'_*\beta'\rangle\label{eq20a}\\
\langle {g'}^*\alpha,{g'}^*\beta\rangle &= \bar g^*\langle \alpha,\beta\rangle\label{eq20}
\end{align}
for any $i\ge 0$ and any $(\alpha,\beta,\beta')\in CH^i(X_2)^0\times CH^{d+1-i}(X_2)^0\times CH^{d+1-i}(X_1)^0$.
\end{thlist}
\end{prop}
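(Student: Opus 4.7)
The plan is to establish (i) and (ii) by directly verifying, fibre by fibre, the two defining conditions of $CH^i(\sX_j)^0$; then to deduce (iii) from them via Lemma \ref{l3}; and finally to derive (iv) from projection formulas combined with a flat base change argument. For (i), let $\alpha \in CH^i(\sX_2)^0$. The generic-fibre condition reduces to $j_1^* g^*\alpha = g'^* j_2^*\alpha \in CH^i_\num(X_1)$, since $g'^*$ preserves numerical equivalence for smooth projective varieties. For $b \in B_1^{(1)}$ with image $b' := g_B(b)$, each component $D$ of $\sX_{1,b}$ maps into a unique component $E$ of $\sX_{2,b'}$ (as $D$ is irreducible), and the commutative square $D\to E$, $\sX_1\to \sX_2$ gives $\kappa_D^* g^*\alpha = (g|_D)^*\kappa_E^*\alpha \in CH^i_\num(D)$. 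For (ii), the generic-fibre condition on $g_*\alpha$ comes from flat base change along the open immersion $j_2$: $j_2^*g_*\alpha = g'_*j_1^*\alpha$, numerically trivial since $g'$ is finite between smooth projective varieties. For the fibre condition, fix $b'\in B_2^{(1)}$ and a component $E$ of $\sX_{2,b'}$. Applying \cite[Th. 6.2 (a)]{fulton} to the cartesian square given by $g$ and the codimension-one regular embedding $\kappa_E$ (regular by semi-stability of $f_2$) yields $\kappa_E^*g_*\alpha = \tilde g_*\kappa_E^!\alpha$, where $\tilde g: g^{-1}(E)\to E$; I would then decompose $\kappa_E^!\alpha$ via the restrictions of $\alpha$ to the components of the fibres $\sX_{1,b}$ for $b \in g_B^{-1}(b')$, each numerically trivial by admissibility, so that the result is numerically trivial on $E$.

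Statement (iii) is then formal: the inclusion $\supseteq$ is (i); conversely, if $g^*\alpha \in CH^i(\sX_1)^0$ then (ii) gives $g_*g^*\alpha = \deg(g)\cdot\alpha \in CH^i(\sX_2)^0$ (projection formula for $g$ generically finite proper), and torsion-freeness of $CH^i(\sX_2)/CH^i(\sX_2)^0$ (Lemma \ref{l3}) yields $\alpha \in CH^i(\sX_2)^0$. For (iv), pick lifts $\tilde\alpha, \tilde\beta \in CH^*(\sX_2)^0$ of $\alpha, \beta$ and $\tilde\beta' \in CH^*(\sX_1)^0$ of $\beta'$. Identity \eqref{eq20a} will follow from the projection formula $g_*(g^*\tilde\alpha\cdot\tilde\beta') = \tilde\alpha\cdot g_*\tilde\beta'$ combined with $(g_B)_*f_{1*} = f_{2*}g_*$. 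For \eqref{eq20}, I factor $g = p\circ h$ through $\sX_2':=\sX_2\times_{B_2} B_1$, with projection $p:\sX_2'\to\sX_2$ and structure morphism $f_2':\sX_2'\to B_1$. Since $g_B$ is a finite surjective morphism between smooth $k$-varieties of equal dimension, it is flat; hence so is $p$. A dimension argument using the flatness of $p$ and the irreducibility of the generic fibre $X_1 = X_2\times_{\eta_2}\eta_1$ should show that $\sX_2'$ is integral, and the cartesian assumption on generic fibres then makes $h:\sX_1\to\sX_2'$ proper birational, so $h_*h^* = \mathrm{id}$ on $CH^*(\sX_2')$. Hence
\[f_{1*}g^*(\tilde\alpha\cdot\tilde\beta) = f_{2*}'h_*h^*p^*(\tilde\alpha\cdot\tilde\beta) = f_{2*}'p^*(\tilde\alpha\cdot\tilde\beta) = g_B^*f_{2*}(\tilde\alpha\cdot\tilde\beta),\]
the last equality by flat base change along $g_B$; combined with $g^*(\tilde\alpha\cdot\tilde\beta) = g^*\tilde\alpha\cdot g^*\tilde\beta$ (both $\sX_i$ smooth), this gives \eqref{eq20}.

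The main technical obstacle will be the fibre condition in (ii): when components of $\sX_{1,b}$ intersect non-trivially, I expect making the decomposition of $\kappa_E^!\alpha$ explicit to require careful use of Fulton's refined Gysin machinery (most notably \cite[Th. 6.2 (a) and (c)]{fulton}), exploiting the strict normal crossings hypothesis built into semi-stability to control excess intersections. A secondary delicate point is verifying that $\sX_2'$ is actually integral in the argument for \eqref{eq20}, which in positive characteristic may require an additional generic étaleness argument or the reduction $(\sX_2')_\mathrm{red}$.
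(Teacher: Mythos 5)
Your overall route coincides with the paper's: verify the two defining conditions of $CH^i(-)^0$ fibre by fibre for (i) and (ii), deduce (iii) from $g_*g^*=\deg(g)$ and Lemma \ref{l3}, and get (iv) from projection formulas after factoring $g$ through $\sX_2\times_{B_2}B_1$. Parts (i) and (iii) and identity \eqref{eq20a} are essentially the paper's argument (for (i) the paper routes the generic-fibre condition through Lemma \ref{l4} and $g_{B*}$ rather than through ``$g'^*$ preserves numerical equivalence'', but both work). Two points, however, are genuine gaps rather than routine details.

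First, the fibre condition in (ii), which you correctly single out as the main obstacle, is left unresolved, and the smoothness issues you wave at in (i) (the identity $\kappa_D^*g^*\alpha=(g|_D)^*\kappa_E^*\alpha$ needs the intermediate schemes to be smooth over $k$ for Fulton's functoriality to apply) recur there in a more serious form. The paper's resolution in both cases is the same device: work with the closures $D,E,Z,T$ (resp.\ $Z$, $g_B^{-1}(Z)$, $g^{-1}(D)$) rather than with the fibres over the points, and delete closed subsets of codimension $\ge 2$ from $B_1$ and $B_2$ --- harmless since $CH^0(Z)\cong CH^0(b)$ and $CH^1(B)$ is unchanged --- so that these subvarieties become smooth over $k$. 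Then $\kappa_E^!$ becomes an honest Gysin pullback to a smooth variety, $\kappa^*g_*x=(g_D)_*\tilde\kappa^*x$ is Fulton's Th.\ 6.2 (a) for the cartesian square over the regular embedding $\kappa$, and the vanishing follows from the projection formula together with the admissibility of $x$ at each point of $g_B^{-1}(b)$. Without some such reduction your ``decomposition of $\kappa_E^!\alpha$'' has no starting point.

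Second, in your argument for \eqref{eq20} the step ``$h_*h^*=\mathrm{id}$ on $CH^*(\sX_2')$'' would fail as written: $\sX_2'=\sX_2\times_{B_2}B_1$ is in general singular (wherever $f_2$ is not smooth and $g_B$ ramifies) and $h$ is not flat, so $h^*$ is simply not defined on $CH^*(\sX_2')$. (Your worry about integrality is the lesser one: since $g_B$ is finite flat by miracle flatness and $X_1=X_2\times_{\eta_2}\eta_1$ is irreducible, one can in fact see that $\sX_2'$ is integral; but integrality does not give you a pullback.) The paper circumvents this precisely by never forming $h^*$: it factors $g^*=(g,f_1)^!(1\times g_B)^!$ through refined Gysin maps attached to regular embeddings into the \emph{smooth} ambient schemes $\sX_2\times_k B_1$ and $\sX_2\times_k B_2$ (Fulton Th.\ 6.5), and replaces ``$h_*h^*=\mathrm{id}$'' by the identity $(g,f_1)^B_*(g,f_1)^!y=(g,f_1)^B_*[\sX_1]\cdot y=y$ of \eqref{eq22}, obtained from Fulton Ex.\ 8.1.7 and Prop.\ 8.1.2 (b) plus the birationality of $\sX_1\to\sX_2'$ onto the dominant component. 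Your final flat base change $f'_{2*}p^*=g_B^*f_{2*}$ is fine; it is the middle of the chain that must be rebuilt with the refined Gysin formalism.
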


\begin{proof} (i) Write $j_i:X_i\inj \sX_i$ for the inclusions. Let $\alpha\in CH^i(\sX_2)^0$: then $j_1^*g^*\alpha={g'}^*j_2^*\alpha\in CH^i_\num(X_1)$.
Next, let $b\in B_1^{(1)}$ and $Z=\overline{\{b\}}$. Let $\beta\in CH_{\delta+i-1}(\sX_{1,Z})$, $f_{1,Z}:\sX_{1,Z}\to Z$ be the restriction of $f_1$ and $\iota_1:\sX_{1,Z}\inj \sX_1$ be the closed immersion : we need to prove that $(f_{1,Z})_*(g^* \alpha\cdot_{\iota_1} \beta)=0$.
Let $T=\bar g(Z)$ and $\bar h:Z\to T$ be the (finite surjective) projection: it suffices to show that $\bar h_*(f_{1,Z})_*(g^* \alpha\cdot_{\iota_1} \beta)=0\in CH^0(T)$. This follows from the computation
\begin{multline*}
0\overset{(a)}{=}(f_{2,T})_*(\alpha\cdot_{\iota_2} h_*\beta)=(f_{2,T})_*\gamma_{\iota_2}^!(h_*\beta\times \alpha)\\
\overset{(b)}{=}(f_{2,T})_*h_*\gamma_{g\iota_1}^!(\beta\times \alpha)=\bar h_*(f_{1,Z})_*\gamma_{g\iota_1}^!(\beta\times \alpha)\\\overset{(c)}{=}\bar h_*(f_{1,Z})_*\gamma_{\iota_1}^!(1\times g)^!(\beta\times \alpha)=\bar h_*(f_{1,Z})_*(g^*\alpha\cdot_{\iota_2}\beta)
\end{multline*}
where $h:\sX_{1,Z}\to \sX_{2,T}$ is the restriction of $g$ and $\iota_2$ is the inclusion $\sX_{2,T}\inj \sX_2$, in which (a) is by hypothesis, (b) follows from Lemma \ref{l2.5}, and (c) follows from  \cite[Prop. 8.1.1 (a)]{fulton} (see comment in op. cit., mid p. 134).

(ii) The inclusion $j_2^*g_*CH^i(\sX_1)^0\subseteq CH^i_\num(X_2)$ is obtained this time from the identity $j_2^*g_*=g'_*j_1^*$.
Next, let $b\in B_2^{(1)}$ $Z=\overline{\{b\}}$ and $\iota_2:\sX_{2,Z}\inj \sX_2$, $f_{2,Z}:\sX_{2,Z}\to Z$ be the inclusion and the projection. 
Let $\alpha\in CH^i(\sX_1)^0$ and 
$\beta\in CH_{\delta+i-1}(\sX_{2,Z})$: we need to prove that $(f_{2,Z})_*(g_*\alpha \cdot_{\iota_2} \beta)=0\in CH^0(Z)$. 

Let $T=\bar g^{-1}(Z)$. Then $\sX_{1,T}\iso \sX_1\times_{\sX_2} \sX_{2,Z}$, hence refined Gysin morphisms $g^!:CH_j(\sX_{2,Z})\to CH_j(\sX_{1,T})$.  By Lemma \ref{l2.4}, we have $(f_{1,T})_*(\alpha \cdot_{\iota_1} g^!\beta)=0$ where $\iota_1$ is the inclusion $\sX_{1,T}\inj \sX_1$. The commutative square
\[\begin{CD}
\sX_{1,T}@>h>> \sX_{2,Z}\\
@Vf_{1,T}VV @Vf_{2,Z}VV\\
T@>\bar h>> Z,
\end{CD}\]
where $h$ and $\bar h$ are the restrictions of $g$ and $\bar g$, gives the identity of push-forwards
\[\bar h_* (f_{1,T})_* =(f_{2,Z})_*h_*. \]

Therefore, it suffices to prove the identity (projection formula)
\begin{equation}\label{eq2.2}
 g_*\alpha \cdot_{\iota_2} \beta = h_*(\alpha\cdot_{\iota_1} g^!\beta).
\end{equation}

For this, consider the commutative diagram of Cartesian squares

\[\begin{CD}
\sX_1@>\delta_{\sX_1}>> \sX_1\times \sX_1@>g\times 1>> \sX_2\times \sX_1\\
@A\iota_1 AA @A\iota_1\times 1AA @A\iota_2\times 1AA \\
\sX_{1,T}@>\gamma_{\iota_1}>> \sX_{1,T}\times \sX_1@>h\times 1>> \sX_{2,Z}\times \sX_1\\
@VhVV&& @V1\times g VV\\
\sX_{2,Z}&@>\gamma_{\iota_2}>>& \sX_{2,Z}\times \sX_2\\
@V\iota_2 VV&& @V\iota_2\times 1 VV\\
\sX_{2}&@>\delta_{\sX_2}>>& \sX_{2}\times \sX_2.
\end{CD}\]

Applying Lemma \ref{l2.5} to the two bottom squares yields first
\[g_*\alpha \cdot_{\iota_2} \beta := \gamma_{\iota_2}^!(1\times g)_*(\beta\times \alpha) = h_*\gamma_{\iota_2}^!(\beta\times \alpha) = h_*\delta_{\sX_2}^!(\beta\times \alpha).\]

We are now left to show the identity
\[\delta_{\sX_2}^!(\beta\times \alpha)=\alpha\cdot_{\iota_1} g^!\beta:= \gamma_{\iota_1}^!(g\times 1)^!(\beta\times \alpha)\]
where the right hand side stems from the top part of the diagram (with vertical arrows pointing  upwards). But $\gamma_{\iota_1}^!(g\times 1)^!(\beta\times \alpha)=\delta_{\sX_1}^!(g\times 1)^!(\beta\times \alpha)$  and $\delta_{\sX_1}^!(g\times 1)^!(\beta\times \alpha)=[(g\times 1)\delta_{\sX_1}]^!(\beta\times \alpha)={}^t\gamma_g^!(\beta\times \alpha)$, both by Lemma \ref{l2.5}. Here, $^t\gamma$ denotes the transpose of a graph (graph composed with the switch of factors). Finally, ${}^t\gamma_g^!(\beta\times \alpha)=\delta_{\sX_2}^!(\beta\times \alpha)$ by applying once again Lemma \ref{l2.5} b) to the diagram of Cartesian squares
\[\begin{CD}
\sX_{1,T}@>(h,\iota_1)>> \sX_{2,Z}\times \sX_1\\
@V\iota_1 VV @V\iota_2\times 1VV\\
\sX_1@>{}^t\gamma_g>> \sX_2\times \sX_1\\
@Vg VV @V1\times gVV\\
\sX_2@>\delta_{\sX_2}>> \sX_2\times \sX_2.
\end{CD}\]

(iii) follows from (i) and (ii) by the projection formula $g_*{g}^*=\deg(g)$ (generic degree), and Lemma \ref{l3}.

(iv) follows from the special case $B_1=B_2$, $\bar g=1_B$ in (i) or (ii).

In (iv), the identities can be checked on the level of $\sX_1$ and $\sX_2$. The first, \eqref{eq20a}, is an easy consequence of the projection formula. Let us prove \eqref{eq20}. The diagram of Cartesian squares
\[\begin{CD}
\sX_1@>(g,f_1)^B>> \sX_2\times_{B_2} B_1@>1\times_{B_2} \bar g>> \sX_2\\
@V||VV @V\text{inj}VV @V(1,f_2) VV\\
\sX_1@>(g,f_1)>>\sX_2\times_k B_1@>1\times \bar g>> \sX_2\times_k B_2
\end{CD}\]
together with Lemma \ref{l2.5} b) and c) gives a factorisation of $g^*$ into a composition of refined Gysin morphisms
\begin{equation}\label{eq21}
g^* =  (g,f_1)^!(1\times \bar g)^!.
\end{equation}

Next, \cite[Ex. 8.1.7]{fulton} applied to the left square with $x=[\sX_1]$ and $y= (1\times \bar g)^!z$ for some $z\in CH^*(\sX_2)$ yields via \cite[Prop. 8.1.2 (b)]{fulton} the identity
\begin{equation} \label{eq22}
(g,f_1)^B_*(g,f_1)^! y =  (g,f_1)^B_*[\sX_1] \cdot y=y;
\end{equation}
indeed, $(g,f_1)^B$ maps $\sX_1$ birationally onto an irreducible component of $\sX_2\times_{B_2} B_1$, and the other irreducible components have support away from $\eta_2$, hence have smaller dimensions. Taking $z=\alpha\cdot \beta$ for $(\alpha,\beta)\in CH^i(\sX_2)^0\times CH^{d+1-i}(\sX_2)^0$, we get
\begin{multline*}
\langle g^*\alpha,g^*\beta\rangle = (f_1)_* (g^*\alpha\cdot g^*\beta) = (f_1)_* g^*(\alpha\cdot \beta)\\
\overset{\eqref{eq21}}{=} (f_2\times 1)_*(g,f_1)^B_* (g,f_1)^!(1\times \bar g)^!(\alpha\cdot \beta)\\
\overset{ \eqref{eq22}}{=} (f_2\times 1)_*(1\times \bar g)^!(\alpha\cdot \beta)\\ 
= \bar g^* (f_2)_*(\alpha\cdot \beta) = \bar g^* \langle \alpha,\beta \rangle
\end{multline*}
where the last but one equality follows once again from Lemma \ref{l2.5}. This readily implies \eqref{eq20}. 
\end{proof}

\begin{rque}\label{r5} In Proposition \ref{p5}, suppose that $\bar g$ is only an alteration. I cannot prove (i). On the other hand, (ii) holds with the same proof, as well as (iv) for $(\alpha,\beta)\in CH^i(X_2)^0\times CH^{d+1-i}(X_2)^0$ such that $(g^*\alpha,g^*\beta)\in CH^i(X_1)^0\times CH^{d+1-i}(X_1)^0$. This is not very important, in view of Remark \ref{r1} b) (see proof of Proposition \ref{p2.2}).
\end{rque}

\subsection{Structure of $CH^i(X)/CH^i(X)^0$}

\begin{prop}\label{p2.2}  
The groups $CH^i(\sX)/CH^i(\sX)^0$ and $CH^i(X)/CH^i(X)^0$ are finitely generated.
\end{prop}

\begin{proof} It suffices to show the first claim. We proceed in several steps. 

1) Suppose $B'$ is an open subset of $B$, let $\sX'=\sX\times_B B'$ and let $\lambda:\sX'\to \sX$ be the corresponding open immersion. Then $CH^i(\sX)^0\subseteq (\lambda^*)^{-1}CH^i(\sX')^0$, with equality if $B-B'$ has codimension $\ge 2$. Therefore the claim for $\sX$ implies the claim for $\sX'$, and conversely in the latter case.

2) $B$ is projective: this follows from Proposition \ref{p2.4}.

3) In general, let $\bar B$ be a compactification of $B$ and $\bar \sX\by{\bar f}\bar B$ a projective morphism extending $f$ (in the sense that $\sX=\bar \sX\times_{\bar B} B$). 

\begin{itemize}
\item By \cite[Th. 4.1]{dJ}, alter $\bar B$ into a smooth projective $k$-variety $\bar B_1$.
\item Let $K_1=k(\bar B_1)$ (a finite extension of $K$), and let $\bar \sX'$ be the closure of $X\otimes_K K_1$ in $\bar \sX\times_{\bar B} \bar B_1$. Again by \cite[Th. 4.1]{dJ}, alter $\bar \sX'$ into a smooth projective $k$-variety $\bar \sX_1$. We are now in the situation of 2).
\item Let $B_1=B\times _{\bar B} \bar B_1$ and $\sX_1=B_1\times_{\bar B_1} \bar \sX_1$. 
\item By Remark \ref{r1} b), the alteration $B_1\to B$ becomes flat, hence finite, after removing from $B$ a closed subset $F$ of codimension $\ge 2$. Let $B'=B-F$ and $\sX'$, $B'_1$, $\sX'_1$ be the corresponding base changes of $\sX$, $B_1$ and $\sX_1$.
\end{itemize}

By 2), the claim is true for $\bar \sX_1$; therefore it is also true for $\sX'_1$ by 1). By Proposition \ref{p5} (i) and (ii), the projection $\sX'_1\to \sX'$ induces maps between $CH^i(\sX')/CH^i(\sX')^0$ and $CH^i(\sX'_1)/CH^i(\sX'_1)^0$, whose composition is multiplication by $[K_1:K]$. Since $CH^i(\sX')/CH^i(\sX')^0$ is torsion-free by Lemma \ref{l3}, it is finitely generated, and so is $CH^i(\sX)/CH^i(\sX)^0$ by reapplying 1).
\end{proof}

\begin{rque}\label{r2.3} Proposition \ref{p2.1} gives a more direct proof of Proposition \ref{p2.2} in characteristic $0$, by the comparison theorem between Betti and $l$-adic cohomology. 
\end{rque}

\subsection{A vanishing result} 

Let $l$ be a prime number invertible in $k$. For any smooth $k$-variety $V$, there are cycle class maps with values in Jannsen's continuous étale cohomology
\[\cl^i:CH^i(V)\to H^{2i}_\cont(V,\Z_l(i))\]
which are compatible with pull-backs, push-forwards and products \cite[(3.25) and (6.14)]{jannsen}.\footnote{Strangely, \cite[(3.25)]{jannsen} only mentions push-forwards for closed immersions, but the case of a general proper morphism is proven in the same way.} 

\begin{lemma}\label{l2.3} Suppose $k$ finitely generated. Then the composition of $\cl^1$ with the projection $H^{2}_\cont(V,\Z_l(1))\to H^{2}_\cont(V,\Z_l(1))/H^{2}_\cont(k,\Z_l(1))$ has finite kernel.
\end{lemma}

\begin{proof} By construction of $\cl^i$, there is a commutative diagram
\[\begin{CD}
CH^1(V)@>\alpha >> CH^1(V)^\wedge\\
@V\cl^1 VV @V(\cl^1)^\wedge VV\\
H^2_\cont(V,\Z_l(1))@>>> \lim H^2(V,\mu_l^n)
\end{CD}\]
where the bottom map is part of the Milnor exact sequence of \cite[(3.16)]{jannsen} and $CH^1(V)^\wedge$ is the $l$-adic completion of $CH^1(V)$. The Kummer exact sequences imply the injectivity of $(\cl^1)^\wedge$. Since $k$ is finitely generated, $CH^1(V)$ is a finitely generated abelian group, which implies that $\alpha$ has finite kernel of order prime to $l$. Hence the same holds for $\cl^1$.

On the other hand, the choice of a $0$-cycle of nonzero degree on $V$ (e.g. a closed point), plus transfer, provide a map $\rho: H^2_\cont(V,\Z_l(1))\to H^2_\cont(k,\Z_l(1))$ such that the composition 
\[H^2_\cont(k,\Z_l(1))\to H^2_\cont(V,\Z_l(1))\by{\rho} H^2_\cont(k,\Z_l(1))\] 
is multiplication by some integer $m>0$. Since $CH^1(k)=0$, the naturality of the cycle class map implies that $\rho\circ \cl^1=0$. Hence the lemma.
\end{proof}

The following proposition will be used in the proof of Proposition \ref{p5.4}.

\begin{prop}\label{p5.1} Let $(\alpha,\beta)\in CH^i(\sX)\times CH^{d+1-i}(\sX)$. Consider the pairing \eqref{eq2}.
As in \S \ref{s2.3}, let $CH^i_l(\sX)$ be the kernel of the \emph{geometric} cycle class map. If $(\alpha,\beta)\in CH^i_l(\sX)\times CH^{d+1-i}_l(\sX)$, then  $\langle \alpha,\beta\rangle$ is torsion. 
\end{prop}

\begin{proof} We may assume $k$ to be the perfect closure of a finitely generated field. We use the spectral sequences  of \cite[Th. (3.3)]{jannsen}
\[E_2^{p,q} = H^p_\cont(k,H^q(V_{\bar k},\Z_l(n))\Rightarrow H^{p+q}_\cont(V,\Z_l(n)).\]

They are compatible with the action of correspondences, in particular with products and push-forwards. Thus, if $F^\bullet H_\cont$ is the filtration on $H_\cont$ induced by the spectral sequence, we have
\begin{multline*}
\cl^1(f_*(\alpha\cdot \beta))=f_*\cl^{d+1}(\alpha\cdot \beta) = f_*(\cl^i(\alpha)\cup \cl^{d+1-i}(\beta))\\
 \in F^2H^2_\cont(B,\Z_l(1))=\IM(H^2_\cont(k,\Z_l(1))\to H^2_\cont(B,\Z_l(1)))
\end{multline*}
if $(\alpha,\beta)\in CH^i_l(\sX)\times CH^{d+1-i}_l(\sX)$. We conclude by Lemma \ref{l2.3}.
\end{proof}

\begin{qn}\label{q1} When $B$ is projective, can one prove Proposition \ref{p5.1} with $CH_l$ replaced by $CH_\num$, without assuming the standard conjectures?
\end{qn}

\subsection{Local height pairing}

In this context, there is not much to say. Let $f$ be as in \S \ref{s1.1}. Let $C_1\in \sZ^i(X)$, $C_2\in \sZ^{d+1-i}(X)$ be two integral cycles with disjoint supports. Let $\sC_i$ be the closure of $C_i$ in $\sX$; then $\sC_1\times_\sX \sC_2$ has support in $\sX_Z$ for some proper closed subset $Z$ of $B$, whence a refined intersection product \cite[\S 8.1]{fulton}:
\[\sC_1\cdot \sC_2\in CH_{\delta-1}(\sX_Z).\]

Given the isomorphism
\[CH_{\delta-1}(Z)\iso \bigoplus_{b\in Z\cap B^{(1)}} \Z, \]
the class $(f_Z)_*(\sC_1\cdot \sC_2)$ defines a divisor on $B$, whose class in $\Pic(B)=CH^1(B)$ is obviously $\langle \sC_1,\sC_2\rangle$ (cf. \cite[Lemma 2.0.1]{beilinson}). One may then extend by bilinearity and get an expression of $\langle,\rangle$ as the class of a divisor.

We leave it to the interested reader to refine Lemma \ref{l3.3} to this local height pairing in the style of \cite[(A.2)]{bloch}.

\section{Extension to the general case}\label{s4a}

Let $X$ be regular, connected and proper over $K$. In the previous section, we defined subgroups $CH^i(X)^0\subset CH^i(X)$ and pairings \eqref{eq0} assuming the existence of a $k$-smooth model $\sX$ of $X$, proper over $B$.

\subsection{Characteristic $0$}\label{s4.1}

\begin{prop}\label{p3.1} Assuming resolution of singularities à la Hironaka, a smooth model always exists. This is the case in particular if $\car k=0$, or if $d+\delta\le 3$ \cite{cp}.
\end{prop}

\begin{proof} Start from an integral proper model $f:\sX\to B$ of $X/K$. Let $U\subseteq \sX$ be the regular locus of $\sX/k$: it is open (EGA IV$_2$, Cor. 6.12.6) and since $X$ is regular, we have $X\subset U$. By hypothesis, we may find $\sX_1$ regular over $k$ and a projective morphism $\pi:\sX_1\to \sX$ such that $\pi_{| \pi^{-1}(U)}:\pi^{-1}(U)\to U$ is an isomorphism. Then the immersion $X\inj \sX$ lifts to $X\inj \sX_1$, and $\sX_1$ is the desired smooth model of $X$ (since $k$ is assumed to be perfect).
\end{proof}

\subsection{Positive characteristic}\label{s4.2} Here we cannot directly use de Jong's theorem \cite{dJ} to replace Hironaka resolution, because there is no control in this theorem on the centre of the alteration. Instead we must proceed more carefully.

\begin{defn}\label{d3.2} Let  $X$ be an integral proper $K$-scheme.\\
a) $X$ is \emph{good} (relatively to $B$) if it admits a $k$-regular proper model $\sX\by{f}B$. (In particular, $X$ is then regular.)\\
b)  A $K$-morphism $\pi:X_1\to X$ is \emph{admissible} if $X_1$ is good.\\
c) We set
\[CH^i(X)^0 =  \{\alpha\in CH^i(X)\mid \ \pi^* \alpha\in CH^i(X_1)^{0}\ \forall\ \pi \text{ admissible}\}.\]
\end{defn}

\begin{lemma}\label{l4.3} a) For any $X$ as in Definition \ref{d3.2}, admissible alterations exist; in particular $CH^i(X)^0\neq \emptyset$.\\
b) If $X$ is good, $CH^i(X)^0$ agrees with the subgroup of Proposition \ref{l6} (ii).\\
c) Given two admissible morphisms $\pi_i:X_i\to X$, there exists an admissible morphism $\pi_3:X_3\to X$ factoring through $\pi_1$ and $\pi_2$.\\
d) If $X$ is regular, we have $CH^i(X)^0\subseteq CH^i_\num(X)$.
\end{lemma}

\begin{proof} a) follows from \cite[Th. 4.1]{dJ} applied to a (not necessarily smooth) model. b) follows from Proposition \ref{l6}. c) Let $\sX,\sX_1, \sX_2$ be $B$-proper models of $X,X_1$ and $X_2$. Taking the graphs of the rational maps $\pi_i:\sX_i\tto \sX$, we may assume these to be $B$-morphisms.  Applying \cite[Th. 4.1]{dJ} again to an irreducible component of $\sX_1\times_\sX\sX_2$ dominant over $B$, we get a $k$-smooth $\sX$-scheme $\sX_3$, projective over $B$ and mapping to $\sX_1$ and $\sX_2$, whose generic fibre $X_3$ maps to $X_1$ and $X_2$ over $X$. 

 For d), let  $\alpha\in CH^i(X)^0$ and $\beta\in CH^{d-i}(X)$. Choose an admissible $\pi$. Writing $[,]$ for the intersection product, we have $[\pi_\eta^*\alpha,\pi_\eta^*\beta]=0$ by definition of $CH^i(X_1)^0$, hence $[\alpha,\beta]=0$. 
\end{proof}

To go further, we need to invert $p$ in characteristic $p$; this is the object of the next subsections.

\subsection{The category $\Ab\otimes R$, where $R$ is a subring of $\Q$} \label{s3.2} (See also \cite[App. B]{BVK}). This category has two equivalent descriptions:

\begin{itemize}
\item It is the localisation of the category $\Ab$ of abelian groups with respect to the Serre subcategory of abelian groups killed by some integer invertible in $A$; in particular, $\Ab\otimes R$ is abelian and the localisation functor $\Ab\to \Ab\otimes R$ is exact.
\item It has the same objects as $\Ab$, while morphisms are those of $\Ab$ tensored with $A$.
\end{itemize}

If $R=\Z[1/p]$, we shall abbreviate $\Ab\otimes R$ to $\Ab[1/p]$.

\begin{lemma} \label{l3.1} The tensor product of $\Ab$ induces a tensor structure on $\Ab\otimes R$, still denoted by $\otimes$.
\end{lemma}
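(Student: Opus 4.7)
The plan is to use the second description of $\Ab\otimes\Q$, in which the objects coincide with those of $\Ab$ and $\Hom_{\Ab\otimes\Q}(A,B)=\Hom_\Ab(A,B)\otimes_\Z\Q$. On objects, $\otimes$ will simply be the usual tensor product of abelian groups; no definition is needed there, so the entire content of the lemma lies in defining $\otimes$ on morphisms and checking the various compatibilities.

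For morphisms, I would exploit the fact that in $\Ab$ the classical assignment $(f,g)\mapsto f\otimes g$ is a $\Z$-bilinear map
\[\mu\colon\Hom_\Ab(A,A')\times\Hom_\Ab(B,B')\longrightarrow\Hom_\Ab(A\otimes B,A'\otimes B').\]
Since any $\Z$-bilinear map between abelian groups extends uniquely to a $\Q$-bilinear map once its sources and target are tensored with $\Q$, one obtains
\[\bar\mu\colon\Hom_{\Ab\otimes\Q}(A,A')\times\Hom_{\Ab\otimes\Q}(B,B')\longrightarrow\Hom_{\Ab\otimes\Q}(A\otimes B,A'\otimes B'),\]
which we take as the action of $\otimes$ on morphisms of $\Ab\otimes\Q$. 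Bifunctoriality, i.e.\ the identities $(f_2f_1)\otimes(g_2g_1)=(f_2\otimes g_2)(f_1\otimes g_1)$ and $1_A\otimes 1_B=1_{A\otimes B}$, reduces by $\Q$-multilinearity in each morphism argument to the case of morphisms coming from $\Ab$, where it is the classical bifunctoriality of the tensor product.

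Finally, for the symmetric monoidal data I would push forward the associator, the left/right unit isomorphisms (with unit object $\Z$), and the symmetry of $\Ab$ through the localisation functor $L\colon\Ab\to\Ab\otimes\Q$, which is the identity on objects and sends these already-invertible components to isomorphisms in $\Ab\otimes\Q$. Naturality and the coherence diagrams (pentagon, triangle, hexagon) commute in $\Ab\otimes\Q$ simply because they commute in $\Ab$ and each instance is the $L$-image of an instance there. No substantive obstacle arises: the only delicate point is the well-definedness of $\otimes$ on morphisms of $\Ab\otimes\Q$, which is handled by the universal property of $-\otimes_\Z\Q$ applied to the bilinear map $\mu$.
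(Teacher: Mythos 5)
Your proof is correct, but it takes a different route from the paper's. You work with the second description of $\Ab\otimes\Q$ (same objects, hom-sets $\Hom_\Ab(A,B)\otimes_\Z\Q$) and simply extend the assignment $(f,g)\mapsto f\otimes g$ $\Q$-bilinearly, after which bifunctoriality and all coherence constraints reduce by multilinearity to the classical statements in $\Ab$; this is a purely formal argument with no computation. The paper instead works with the first description ($\Ab\otimes\Q$ as the localisation of $\Ab$ at the morphisms whose kernel and cokernel have finite exponent) and verifies the descent criterion: if $f$ becomes invertible in $\Ab\otimes\Q$ then so does $f\otimes 1_C$. That verification is not formal — it splits $f$ through its image and uses the key fact that $G\otimes C$ and $\Tor(G,C)$ have finite exponent whenever $G$ does, fed into the $\Tor$ exact sequence. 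The trade-off is that the paper's argument is intrinsic to the localisation and makes visible exactly which property of the Serre subcategory is being used (stability of finite-exponent groups under $-\otimes C$ and $\Tor(-,C)$), whereas yours is shorter but leans on the asserted equivalence of the two descriptions of $\Ab\otimes\Q$ to transport the structure back to the localisation; since the paper states that equivalence explicitly in the same subsection, this is a legitimate dependence and your argument stands.
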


(This allows us to talk of a ``pairing in $\Ab\otimes R$''.)

\begin{proof} It suffices to show that, if $f\in\Ab(A,B)$ becomes invertible in $\Ab\otimes R$ (i.e. $\Ker f$, $\Coker f$ have $p$-power exponent), the same holds for $f\otimes 1_C$ for any $C\in \Ab$. By considering the image of $f$, we may treat separately the cases where $f$ is injective and $f$ is surjective. Both hold because, if $G\in \Ab$ has $p$-power exponent, so do $G\otimes C$ and $\Tor(G,C)$ for any $C\in \Ab$.
\end{proof}

\begin{rques}\label{r10} a) Let $A,B$ be two abelian groups. By definition, a morphism in $(\Ab\otimes R)(A,B)=\colim_{N\ne 0} \Ab(A,B)$ is represented by a pair $(\phi,N)$ with $\phi:A\to B$ and $N$ an integer invertible in $R$; two pairs $(\phi_1,N_1)$ and $(\phi_2,N_2)$ are equivalent if there exist two such integers $d_1,d_2$ such that $d_1N_1=d_2N_2=:N_3$ and $(d_1\phi_1,N_3)=(d_2\phi_2,N_3)$. We get a well-defined homomorphism $\rho:(\Ab\otimes R)(A,B)\to \Ab(A,B\otimes R)$ by sending a pair $(\phi,N)$ to $\psi ;= N^{-1}\phi$; its image is contained in the subgroup formed of those homomorphisms $\psi:A\to B\otimes R$ such that $\psi(A)\subseteq N^{-1} \bar B$ for some $N\ne 0$, with $\bar B=B/\text{torsion}$. If $B$ is torsion-free, $\rho$ is injective with the above image.\\
b) In any category, the commutativity of a diagram (i.e. the equality of two arrows) is equivalent to the commutativity of  a family of diagrams of sets, thanks to Yoneda's lemma. In the category of modules over a ring $R$, one can test such commutativity on elements, because the $R$-module $R$ is a generator.  
  
In the sequel, we shall extend identities such as   \eqref{eq19a}, \eqref{eq20a} and \eqref{eq20} to $\Ab\otimes R$.  However this category is not Grothendieck (note that abelian groups with finite exponent are not closed under infinite direct sums), so reasoning with ``elements'' is abusive. Writing out the above identities as commutative diagrams in $\Ab$ is straightforward, but cumbersome. (For example, \eqref{eq19a} means that two homomorphisms from $CH^{d_2}(X_1\times_K X_2)\otimes CH^i(X_2)^0\otimes CH_{i-1}(X_1)^0$ to $CH^1(B)$ agree.) We shall therefore sometimes make the abuse of talking of such identities in $\Ab\otimes R$ when we mean the corresponding commutative diagrams.
\end{rques}
 
 In Theorem \ref{p3}, we shall use a local-to-global result for these localisations (Corollary \ref{t4.1} below). 
 
 \begin{thm}\label{t4.0} Let $H$ be a module over an integral domain $R$ with quotient field $Q$. 
Suppose given, for each maximal ideal $\fm\subset R$, an element $f_\fm\in H_\fm$, all of which become equal in $Q\otimes_R H$. Then there exists at most one element $f\in H$ which becomes equal to $f_\fm$ in $H_\fm$ for every $\fm$; $f$ exists provided 
\begin{thlist}
\item $H$ is torsion free, or
\item $R$ is Noetherian and $S=\Supp(M_\tors)$ is a finite set of maximal ideals.
\end{thlist}
 \end{thm}
 
(Counterexample without Hypothesis (ii): $R=\Z$, $H=\bigoplus_\fm \Z/\fm$, $f_\fm=1_\fm$.)
 
\begin{proof} \emph{Uniqueness.} Let $f,f'$ verifying the condition. Then $f$ and $f'$ become equal in $H_\fm$ for all $\fm$. This means that, for every $\fm$, there exists $M_\fm\in R-\fm$ such that $M_\fm(f-f')=0$. Since the $M_\fm$ generate $R$ as an ideal, we get $f=f'$.

\emph{Existence.}  
We may write $f_\fm=r_\fm^{-1}\tilde f_\fm$  with $\tilde f_\fm\in H$ and $r_\fm\in R- \fm$; again, the $r_\fm$'s generate the unit ideal of $R$. In case (i), if $g\in Q\otimes_R H$ is the common value of the $f_\fm$, then $r_\fm g\in H$ for  all $\fm$; if $(a_\fm)$ is a family of elements of $R$ with finite support such that $\sum a_\fm r_\fm=1$, then $g=\sum a_\fm r_\fm g\in H$.

In case (ii), write $T=H_\tors$ for notational simplicity.  Considering $H/T$, we find $f_0$ such that $1_\fm \otimes f_0 -f_\fm$ is torsion for all $\fm$, hence is $0$ for $\fm\notin S$. 

\begin{claim} The monomorphism $T\mapsto \prod_{\fm\in S} T_\fm$ is surjective.
\end{claim}

\begin{proof} For each $\fm\in S$, let $T^\fm=\Ker(T\to \prod_{\fm'\ne \fm} T_{\fm'})$: we must show that $T=\sum T^\fm$. Let $t\in T$; by assumption, the radical of $\Ann(t)$ (the annihilator of $t$) is of the form $\prod_{\fm\in S'}\fm$ for a subset $S'$ of $S$. By \cite[IV.2.5, Prop. 9]{bbki}, $R(t)=R/\Ann(t)$ is Artinian, hence $R(t) \iso \prod_{\fm\in S'} R(t)_\fm$ (ibid., Cor. 1); equivalently, $Rt\iso \prod_{\fm\in S'} (Rt)_\fm$, which shows that $t\in \sum T^\fm$.
\end{proof}

Coming back to the proof of Case (ii), the claim yields an element $t\in T$ such that $t_\fm=1_\fm \otimes f_0 -f_\fm$ for all $\fm\in S$; then $f=f_0-t$ yields the desired element.
 \end{proof}

 \begin{cor}\label{t4.1} Let $A,B\in \Ab$ and $R$ be a subring of $\Q$. Suppose given, for each prime number $l$ not invertible in $R$, a morphism $f_l:A\to B$ in $\Ab\otimes\Z_{(l)}$, all of which become equal in $\Ab\otimes \Q$. Then there exists at most one morphism $f:A\to B$ in $\Ab\otimes R$ which becomes equal to $f_l$ in $\Ab\otimes\Z_{(l)}$ for every $l$; $f$ exists provided $B$ is $l$-torsion free for almost all $l$ not invertible in $R$.
 \end{cor}

\begin{proof} Apply Theorem \ref{t4.0} to  $H=\Hom(A,B)\otimes R$, noting that the hypothesis on $B$ implies the hypothesis on $H$.
\end{proof}

\subsection{$p$-covers}

\begin{defn}\label{d3.3a} Let  $X$ be an integral proper $K$-scheme. A \emph{$p$-cover} of $X$ is a finite family $(\pi_l:X_l\to X)$, indexed by prime numbers $l\ne p$ and such that
\begin{thlist}
\item for each $l$,  $\pi_l$ is an admissible alteration of generic degree $d_l$ prime to $l$;
\item $\gcd_l(d_l)$ is a power of $p$.
\end{thlist}
\end{defn}

\begin{prop}\label{p3.3} a) $p$-covers exist. \\
b) Given two $p$-covers $(\pi_l)$, $(\pi'_l)$, there exists a third $p$-cover $(\pi''_l)$ such that, for each $l$, $\pi''_l$ factors through $\pi_l$ and $\pi'_l$.\\
c) Given a $p$-cover $(\pi_l)$ and an admissible morphism $f_1:X_1\to X$, there exists a $p$-cover $(\pi_{1,l})$ of $X_1$ such that the composition $X_{1,l}\to X_1\to X$ factors through $X_l$ for each $l$.
\end{prop}

\begin{proof} a) We use Gabber's refinement of de Jong's alteration theorem \cite[Exp. X, th. 2.1]{gabber}: given a model $\sX$ of $X$ and a prime number $l\ne p$, we may find an alteration $\sX_l\to \sX$ with $\sX_l$ regular (hence smooth over $k$) and of generic degree $d_l$ prime to $l$; the induced alteration $\pi_l:X_l\to X$ is then admissible of generic degree $d_l$.   Considering the other prime divisors of $d_l$ different from $p$, we may find a finite number of $l$'s and $\pi_l$'s such that the gcd of the $d_l$ is a power of $p$. 

b) and c) are proven similarly to a).
\end{proof}

\subsection{The refined height pairing (characteristic $p$)}

\begin{defn}\label{d3.3} We set
\[CH^i(X)^{[0]}= \{\alpha\in CH^i(X)\mid \ \exists s\ge 0: p^s \alpha\in CH^i(X)^0\}.\]
\end{defn}

\begin{prop}\label{p3.4}
a) If $X$ is regular, $CH^i(X)/CH^i(X)^0$ is an extension of a finitely generated abelian group by a torsion group of $p$-power exponent, and $CH^i(X)/CH^i(X)^{[0]}$ is finitely generated with prime-to-$p$ torsion.\\
b) Let $(\pi_l)$ be a $p$-cover of $X$, and let $\alpha\in CH^i(X)$. Then $\alpha\in CH^i(X)^{[0]}$ if and only if $\pi_l^*\alpha\in CH^i(X_l)^{[0]}$ for each $l$.\\
c) Propositions \ref{l6} and \ref{p5} (i), (ii), (iii) extend to all regular $X$'s after replacing $CH^i(X)^0$ by $CH^i(X)^{[0]}$.
\end{prop}

\begin{proof} 
a) Given a $p$-cover $(\pi_l)$, since $(\pi_l)_*\pi_l^*$ is multiplication by $d_l$ for each $l$, $\Ker(CH^i(X)/CH^i(X)^0\to \prod_l CH^i(X_l)/CH^i(X_l)^0)$ is killed by a power of $p$, say $p^s$, and the first claim follows from Proposition \ref{p2.2}. The second follows by definition of $CH^i(X)^{[0]}$.

b) The condition is necessary by definition; the converse follows from Proposition \ref{p3.3} c), as in a). 

c) Let $X_1,X_2,\gamma$ be as in Proposition \ref{l6}. To prove \eqref{eq16a}, we must show that $\pi^*\gamma^*CH^i(X_2)^0\subseteq CH^{i+r}(X'_1)^0$ for any admissible $\pi:X'_1\to X_1$; replacing $\gamma$ by $\gamma\circ \pi$, we may assume that $X_1$ is good and $\pi=1_{X_1}$. Choose a $p$-cover $(\pi_l)$ of $X_2$. For $\alpha\in CH^i(X_2)^{[0]}$, we have $\pi_l^*\alpha\in  CH^i(X_l)^{[0]}$, hence
\[d_l\gamma^*\alpha = (\gamma^*(\pi_l)_*)\pi_l^*\alpha\in CH^{i+r}(X_1)^{[0]}\]
for all $l$ thanks to Proposition \ref{l6}, hence $p^s\gamma^*\alpha\in CH^{i+r}(X_1)^{[0]}$ and finally $\gamma^*\alpha\in CH^{i+r}(X_1)^{[0]}$ as desired. The cases in Proposition \ref{p5} are treated similarly.
\end{proof}

\begin{lemma}\label{l4.2} Let $\pi:X_1\to X$ be an admissible alteration, of generic degree $d$ prime to $l$, where $l\ne p$. Then the morphism in $\Ab\otimes \Z_{(l)}$
\begin{multline*}
\langle, \rangle_{(l)}:CH^i(X)^{[0]}\otimes CH^{d+1-i}(X)^{[0]}\by{(\pi^*\otimes \pi^*)}\\CH^i(X_1)^{[0]}\otimes CH^{d+1-i}(X_1)^{[0]}
\by{d^{-1}\langle,\rangle} CH^1(B)
\end{multline*}
does not depend on the choice of $\pi$, and coincides with $\langle,\rangle$ if $X$ is good. For two prime numbers $l,l'\ne p$, we have $\langle, \rangle_{(l)}=\langle, \rangle_{(l')}$ in $\Ab\otimes \otimes\Q$.
\end{lemma}

\begin{proof} Let $\pi':X'_1\to X$ another such alteration, with generic degree $d'$. By Proposition \ref{p3.3} c) applied to an irreducible component of $X_1\times X X'_1$ dominating $X$, we can find admissible alterations $X''_1\by{\rho} X_1$, $X''_1\by{\rho'} X'_1$ of generic degrees $\delta,\delta'$ such that $\pi\rho=\pi'\rho'$, hence $\delta d = \delta'd'$. Using elements to clarify the argument, we have for $(\alpha,\beta)\in CH^i(X)^{[0]}\times CH^{d+1-i}(X)^{[0]}$
\begin{multline*}
d^{-1}\langle \pi^*\alpha,\pi^*\beta\rangle=d^{-1}\delta^{-1}\langle \rho^*\pi^*\alpha,\rho^*\pi^*\beta\rangle\\ 
 ={d'}^{-1}{\delta'}^{-1}\langle {\rho'}^*{\pi'}^*\alpha,{\rho'}^*{\pi'}^*\beta\rangle={d'}^{-1}\langle {\pi'}^*\alpha,{\pi'}^*\beta\rangle
\end{multline*}
where we used \eqref{eq19a} and the identities $\rho_*\rho^*=\delta$, $\rho'_*{\rho'}^*=\delta'$. The second claim follows by taking $\pi=1_X$. For the third claim, we argue similarly by using an admissible alteration covering two admissible alterations of generic degrees prime to $l$ and $l'$.
\end{proof}

 \begin{thm}\label{p3} a) There exists a unique pairing
 \begin{equation}\label{eq9}
\langle,\rangle:CH^i(X)^{[0]}\otimes CH^{d+1-i}(X)^{[0]}\to CH^1(B)
\end{equation}
in $\Ab[1/p]$ which coincides with $\langle, \rangle_{(l)}$ in $\Ab\otimes \Z_{(l)}$ for each $l$.\\
b) The identities of Propositions   \ref{r4} (see Remark \ref{r10} b)) and \ref{p5} (iv) extend to these pairings.
\end{thm}

\begin{proof} a) Suppose first that $k$ is the perfect closure of a field $k_0$ finitely generated over $\F_p$, and that $B=B_0\otimes_{k_0} k$ for some smooth $k_0$-variety $B_0$. Then $CH^1(B_0)$ is a finitely generated abelian group \cite{picfini}, and $CH^1(B_0) \otimes \Z[1/p]$ does not change under purely inseparable extensions; in particular,  $CH^1(B) \otimes \Z[1/p]$ has finite torsion and a fortiori verifies the hypothesis of Corollary \ref{t4.1}. The result then follows from this theorem and Lemma \ref{l4.2}.

In general, the situation is defined over such a subfield of $k$, so reduces to the first case.

b) Let $X_1,X_2$ be (proper) regular, and let $\gamma\in CH^{\dim X_2}(X_1\times_K X_2)$. We need to prove the analogue of \eqref{eq19a}:
\[\langle, \rangle_1\circ \gamma^*\otimes 1 = \langle, \rangle_2\circ 1\otimes\gamma_*\]
where $\langle, \rangle_i$ is the height pairing of $X_i$. By the uniqueness statement of Corollary \ref{t4.1}, it suffices to prove this identity after localising at $l$ for all $l\ne p$. Let $\pi_i:X_{i,l}\to X_i$ ($i=1,2$) be two admissible alterations of generic degrees $d_i$ prime to $l$, and let $\gamma_l=\pi_2^*\circ \gamma \circ (\pi_1)_*\in CH^{\dim X_2}(X_{1,l}\times_K X_{2,l})$, so that $d_2\gamma\circ (\pi_1)_*=(\pi_2)_*\gamma_l$ and $\gamma_l\circ \pi_1^*=d_1\pi_2^*\circ \gamma$.   By Lemma \ref{l4.2}, we have, with obvious notation:
\begin{multline*}
\langle, \rangle_1\circ \gamma^*\otimes 1 = d_1^{-1} \langle, \rangle_{1,l}\circ \pi_1^*\gamma^*\otimes \pi_1^*
= d_1^{-1}d_2^{-1} \langle, \rangle_{1,l}\circ \gamma_l^* \pi_2^*\otimes \pi_1^*\\
\overset{(a)}{=}d_1^{-1}d_2^{-1} \langle, \rangle_{2,l}\circ \pi_2^*\otimes (\gamma_l)_* \pi_1^*=d_2^{-1} \langle, \rangle_{2,l}\circ \pi_2^*\otimes \pi_2^* \gamma_*\\
=\langle, \rangle_2\circ 1\otimes  \gamma_*
\end{multline*}
where (a) used \eqref{eq19a} for $\gamma_l$.

The identity of Proposition \ref{p5} (iv) is extended in similar fashion.
\end{proof}

We shall use the following fact in the proof of Theorem \ref{t2}:

\begin{ex}\label{ex1} Suppose that $X$ is an abelian variety. For $a\in X(K)$, write $\tau_a$ for the translation by $a$. It yields a self-correspondence of degree $0$ still denoted by $\tau_a$, and we have the obvious formula ${}^t\tau_a=\tau_{-a}$. This yields the identity (see Remark \ref{r10} b))
\[\langle\tau_a^*\alpha,\beta\rangle =\langle\alpha,\tau_{-a}^*\beta\rangle\]
for $(\alpha,\beta)\in CH^i(X)^{[0]}\times CH^{d+1-i}(X)^{[0]}$.
\end{ex}

\begin{rque}\label{r8} The functoriality of Proposition \ref{p3.4} c) means that the subgroups $CH^i(X)^{[0]}$, for varying $X$ and $i$, define an \emph{adequate equivalence relation} on algebraic cycles with integral coefficients on smooth projective $K$-varieties. This adequate relation a priori depends on the choice of $B$, but see Conjecture \ref{c1} and Remark \ref{r6.1} below.
\end{rque}

\subsection{Extension to imperfect fields}\label{s3.1} Let $X,K,B$ be as in the introduction, but relax the assumption that $k$ is perfect; specifically, we assume $k$ imperfect of characteristic $p$. Write $k^p$ (resp. $K^p$, $B^p$, $X^p$ for the perfect closure of $k$ (resp. for $K\otimes_k k^p$, $B\otimes_k k^p$, $X\otimes_K K^p$). 

We define $CH^i(X)^{[0]}$ as the inverse image of $CH^i(X^p)^{[0]}$ under the pull-back morphism $CH^i(X)\to CH^i(X^p)$. We claim that the pairing \eqref{eq9} for $X^p$ induces a similar pairing for $X$, with the same properties.

Since the homomorphism $\lambda: CH^1(B)\to CH^i(B^p)$ has $p$-primary torsion kernel and cokernel, this is trivial if we accept to replace $CH^1(B)$ by $CH^1(B)\otimes\Z[1/p]$ (note that $\Ker \lambda$ and $\Coker\lambda$ do not have finite exponent, so $\lambda$ is not an isomorphism in $\Ab[1/p]$). We can avoid this, however, by observing that all constructions involved in constructing \eqref{eq9} for $X^p$ and proving its properties are defined over some finite subextension of $k^p/k$.

\section{Homologically and algebraically trivial cycles} \label{s4}

From now on, we write
\[CH^i(X)^{(0)}= \{\alpha\in CH^i(X)\mid \ \exists n\ne 0: n \alpha\in CH^i(X)^0\}\]
for the saturation of $CH^i(X)^0$. We have the inclusion
\begin{equation}\label{eq5.1}
CH^i(X)^{(0)}\subseteq CH^i_\num(X)
\end{equation}
by Lemma \ref{l4.3} d) and the fact that $CH^i(X)/CH^i_\num(X)$ is torsion-free.

\subsection{Conjectures}  The following is a numerical analogue to \cite[Conj. 2.2.5]{beilinson}.

\begin{conj}\label{c1} The inclusion \eqref{eq5.1} is an equality.
\end{conj}

Let the index $l$ denote homological equivalence for $l$-adic cohomology, $l\ne \car k$. Conjecture \ref{c1} implies

\begin{conj} \label{c2} One has the inclusion $CH^i_l(X)\allowbreak \subseteq CH^i(X)^{(0)}$.
\end{conj}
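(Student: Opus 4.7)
The plan is essentially to reduce Conjecture \ref{c2} to the classical inclusion ``homological equivalence refines numerical equivalence''. More precisely, assuming Conjecture \ref{c1}, one has $CH^i(X)^{(0)}=CH^i_\num(X)$, so it suffices to prove
\[CH^i_l(X)\subseteq CH^i_\num(X),\]
a statement independent of the rest of the paper and valid for any smooth projective $K$-variety $X$ of dimension $d$. The hard part is already dispatched in Proposition~\ref{p7} b), which gives the reverse inclusion $CH^i(X)^{(0)}\subseteq CH^i_\num(X)$ unconditionally; here we only need the elementary half.

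To prove $CH^i_l(X)\subseteq CH^i_\num(X)$, I would take $\alpha\in CH^i(X)$ with $\cl_l^i(\alpha)=0$ in $H^{2i}(X_{\bar K},\Z_l(i))$ and an arbitrary $\beta\in CH^{d-i}(X)$. By multiplicativity of the cycle class map,
\[\cl_l^d(\alpha\cdot\beta)=\cl_l^i(\alpha)\cup \cl_l^{d-i}(\beta)=0\in H^{2d}(X_{\bar K},\Z_l(d)).\]
After passing to the algebraic closure (which preserves both homological and numerical equivalence), one may assume $X$ geometrically connected; then the composition
\[CH_0(X_{\bar K})\by{\cl_l^d} H^{2d}(X_{\bar K},\Q_l(d))\iso \Q_l\]
is the $\Q_l$-linear extension of the degree map. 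Hence $\deg(\alpha\cdot\beta)=0$, showing $\alpha\sim_\num 0$. The case where $X$ is not geometrically connected is handled componentwise.

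Combining the two ingredients gives $CH^i_l(X)\subseteq CH^i_\num(X)=CH^i(X)^{(0)}$, which is Conjecture~\ref{c2}. I do not expect any genuine obstacle: the whole argument uses only the compatibility of the $l$-adic cycle class map with intersection products and the comparison between degree and top-degree $l$-adic cohomology. The remark ``compare Proposition~\ref{p2.1}'' in the statement is consistent with this reading, since that proposition proves the analogous inclusion $CH^i_l(\sX)\subseteq CH^i(\sX)^0$ unconditionally on a semi-stable model, which is the \emph{source} of the general inclusion $CH^i(X)^{(0)}\subseteq CH^i_\num(X)$; Conjecture~\ref{c1} is exactly what is needed to close the loop.
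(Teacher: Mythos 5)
Your argument does not prove the statement: it is conditional on Conjecture \ref{c1}, which is itself open. The paper establishes $CH^i(X)^{(0)}=CH^i_\num(X)$ only for $i=1$ and $i=d$ (Proposition \ref{p2} b)); for general $i$ only the inclusion $CH^i(X)^{(0)}\subseteq CH^i_\num(X)$ is known (Proposition \ref{p7} b)), which goes in the wrong direction for your purposes. So what you have written is precisely the implication ``Conjecture \ref{c1} implies Conjecture \ref{c2}'' that the paper already records in the sentence introducing the conjecture. The classical half of your argument ($l$-adic homological equivalence refines numerical equivalence, via multiplicativity of $\cl_l$ and the identification of the top-degree cycle class with the degree) is correct but easy; the step that converts $CH^i_\num(X)$ into $CH^i(X)^{(0)}$ is exactly the unavailable one. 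Note also that the paper points out the reverse implication (Conjecture \ref{c2} implies Conjecture \ref{c1} under Grothendieck's standard conjecture D), so the two conjectures are of essentially the same depth and Conjecture \ref{c2} cannot be reduced to classical facts in this way.

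For comparison, what the paper proves unconditionally is much weaker: Proposition \ref{p2.1} gives the model-level inclusion $CH^i_l(\sX)\subseteq CH^i(\sX)^0$, and Proposition \ref{l7a} reduces Conjecture \ref{c2} to the semi-stable case with $k$ perfect and settles it when $f$ is smooth (potentially good reduction), by the smooth and proper base change theorem; Remark \ref{r7} states that the full conjecture would follow from the Hodge or Tate conjectures via the diagram of Proposition \ref{p8}. The genuine difficulty, which your reduction bypasses by assuming Conjecture \ref{c1}, is to show that a homologically trivial cycle on $X$ admits a lift to a semi-stable model $\sX$ whose restrictions to the components $D_\lambda^b$ of the fibres over all $b\in B^{(1)}$ are numerically trivial; your proposal says nothing about this. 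A small side correction: Proposition \ref{p2.1} is not the ``source'' of the inclusion $CH^i(X)^{(0)}\subseteq CH^i_\num(X)$; the latter follows from the generic-fibre condition in Definition \ref{d2} together with the projection formula, as in the proof of Proposition \ref{p7} b).
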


Conversely, Conjecture \ref{c2} implies Conjecture \ref{c1} under Gro\-then\-dieck's standard conjecture D, by Propositions \ref{p2.2} (and \ref{p3.4} a) in characteristic $p$).

\begin{prop}\label{l7a} Conjecture \ref{c2} is true if $X$ admits a model $f:\sX\to B$ with $f$ smooth. 
\end{prop}

\begin{proof} 
This follows from the smooth and proper base change theorem (see Remark \ref{r2.4}).
\end{proof}

\begin{rques}\label{r6.1} a) More generally, Proposition \ref{p2.1} shows that $CH^i(X)^0$ contains the image of $CH^i(\sX)^0_{\cB,l}$ for any model $f:\sX\to B$ of $X$ with $\sX$ smooth.\\
b) Suppose $X$ smooth (not just regular). For clarity, let us write $CH^i(X)^{(0)}_B$ to mark the dependence of $CH^i(X)^{(0)}$ on the model $B$. If $U$ is an open subset of $B$, we obviously have $CH^i(X)^{(0)}_B\subseteq CH^i(X)^{(0)}_U$, and this direct system is \emph{essentially constant} by Proposition \ref{p3.4} b). For $U$ small enough, Proposition \ref{l7a} thus gives inclusions
\[CH^i_l(X) \subseteq CH^i(X)^{(0)}_U\subseteq CH^i_\num(X)\]
where the middle group does not change when $U$ gets smaller (note that equality on the right is \emph{not} clear: see Remark \ref{r2.4}). In view of Remark \ref{r8}, this defines a new adequate equivalence on smooth projective $K$-varieties, this time independent of the choice of $B$ (and which conjecturally agrees with numerical equivalence). 
\end{rques}

\begin{thm}\label{t5.1} Conjecture \ref{c1} is true in the following cases:
\begin{thlist}
\item $i=1,d$.
\item $\car K=0$, $f$ is smooth and 
\begin{itemize}
\item either $i\in \{2,d-1\}$
\item or $X$ is ``of abelian type'' (i.e. its homological motive is isomorphic to a direct summand of the motive of an abelian variety).
\end{itemize}
\end{thlist}
\end{thm}

\begin{proof} For (i), see Proposition \ref{p2} b) below. For (ii), homological and numerical equivalences agree in the said cases by Lieberman \cite{lieb}. Therefore, the statement follows from Proposition \ref{l7a}.
\end{proof}

\subsection{Algebraic equivalence}

\begin{thm}\label{p2} a) One has $CH^i_\alg(X)\subseteq CH^i(X)^{(0)}$.\\
b) Conjecture \ref{c1} is true for $i=1,d$.
\end{thm}

Of course, b)  follows from a) (using Matsusaka's theorem \cite{mats} in case $i=1$).

To prove a), we first reduce to the case where $X$ has a smooth model $\sX$ as in Section \ref{s4a}: this is automatic if $\car k=0$ by Proposition \ref{p3.1}, and if $\car k>0$ we first reduce to $k$ perfect as in Subsection \ref{s3.1}, then we can use Proposition \ref{p3.3} a) and a transfer argument.

We now give ourselves a model $f:\sX\to B$ of $X$ with $\sX$ smooth. The proof is in two steps.

\subsubsection*{Step 1} $d=1$ and two sections $\tilde c_0,\tilde c_1$ of $f$ are given. Let $c_0,c_1$ be their generic fibres and $\alpha=[c_0]-[c_1]$.

\begin{lemma}\label{l5.1} There exists an integer $N>0$ such that $N\alpha\in CH^1(X)^0$.
\end{lemma}

\begin{proof} Let $\tilde \alpha= [\tilde c_0(B)]-[\tilde c_1(B)]\in CH^1(\sX)$. Then $j^*\tilde \alpha\in CH^1_\alg(X)\subseteq CH^1_\num(X)$. We now need to find $N>0$ and $\xi\in \Ker j^*$ such that $N\tilde \alpha+\xi\in CH^1(\sX)^0$.  We shall look for $\xi$ in the form
\[\xi=\sum_{b\in B^{(1)}} (\iota_b)_* \xi_b\]
where $\iota_b:\sX_{Z_b}\inj \sX$ is the inclusion (with $Z_b=\overline{\{b\}}$ as usual) and each $\xi_b$ is a linear combination of classes of irreducible $\delta$-dimensional components $\sX_{Z_b}^\lambda$ of $\sX_{Z_b}$ (almost all $\xi_b$ will be $0$). For this, I claim that the method of \cite[III.8]{silverman} extends to this case:

The first thing to check is that the hypothesis of loc. cit., Proposition III.8.3 is verified, namely that $\langle \tilde \alpha,[\sX_{Z_b}]\rangle_b=0$ for all $b\in B^{(1)}$. For simplicity, write $Z$ and $\iota$ instead of $Z_b$ and $\iota_b$. 
Up to removing a proper closed subset from $Z$, we may assume it smooth. In the Cartesian square of the diagram
\[\begin{CD}
&&Z@>g_i=(d_i,\iota')>> \sX_Z\times B\\
&&@Vd_i VV @V1\times \tilde c_i VV\\
Z @<f_Z<< \sX_Z@>\gamma_\iota>> \sX_Z\times \sX
\end{CD}\]
where $d_i=(\tilde c_i)_{|Z}$ and $\iota'$ is the inclusion $Z\inj B$, the top horizontal map $g_i$ is a regular embedding of codimension $\delta+1$ as the composite of the two regular embeddings
\[Z\by{\delta} Z\times Z\by{d_i\times \iota'}\sX_Z\times B.\]

Here we use that the embedding $d_i$ is regular (EGA IV$_4$, Prop. 19.1.1). Then
\begin{multline*}
\langle [\tilde c_i(B)],[\sX_Z]\rangle_b= (f_Z)_*\gamma_\iota^!([\sX_Z]\times (\tilde c_i)_*[B])
= (f_Z)_*\gamma_\iota^!((1\times \tilde c_i)_*[\sX_Z\times B])\\
\overset{(a)}{=} (f_Z)_*(d_i)_*\gamma_\iota^![\sX_Z\times B] = \gamma_\iota^![\sX_Z\times B]\\
\overset{(b)}{=}g_i^![\sX_Z^\lambda\times B]\overset{(c)}{=}\delta^!(d_i^![\sX_Z]\times {\iota'}^![B])
=[Z]
\end{multline*}
where (a) (resp. (b), (c)) is once again Lemma \ref{l2.5} a) (resp. b), c)).

Now 
\[CH_{\delta+i-1}(\sX_Z)=CH_\delta(\sX_Z)\osi \bigoplus_\lambda \Z [\sX_Z^\lambda]\] 
where the $\sX_Z^\lambda$ are the irreducible components of $\sX_Z$ of dimension $\delta$: this follows from \cite[Ex. 1.8.1]{fulton} by induction on the number of components. The second thing to observe is that the statement and proof of \cite[Prop. III.8.2]{silverman} apply verbatim, namely that the quadratic form $\alpha \mapsto \langle \iota_*\alpha, \alpha\rangle_b$ on $CH_{\delta}(\sX_Z)$ is negative, with kernel generated by $[\sX_Z]$. Indeed, this is a local computation so we can consider the fibre of $\sX$ over $\Spec \sO_{B,b}$ and simply apply the said proposition. (The fact that $f_*\sO_\sX=\sO_B$, which is used in its proof, follows from the fact that $X$ is geometrically connected since it has rational points, and that $B$ is normal.)

We can now find $N$ and $\xi$ just as in \cite[Prop. III.8.3]{silverman}.
\end{proof}

\subsubsection*{Step 2}The general case. Let $\alpha \in CH^i_\alg(X)$. By \cite[Lemma 3.8]{achteretal}, there exist an integer $s\ge 0$, a smooth projective $K$-curve  $C$, two rational points $c_0,c_1\in C(K)$ and an element $y\in CH^i(C\times X)$ such that $p^s\alpha =(c_0^* - c_1^*)y$ (recall that $p$ is the exponential characteristic of $k$). 

\begin{lemma}[Q. Liu, cf. \protect{\cite{liu-tong}}]\label{l-liu} There exists a closed subset $F\subset B$ of codimension $> 1$ such that $C$ lifts to a regular proper $(B - F)$-scheme $\sC$ and the $c_i$'s lift to sections $\tilde c_i$ of $\sC\to B - F$.
\end{lemma}

\begin{proof} See Proposition \ref{pa} of the appendix.
\end{proof}

By Step 1 and Lemma \ref{l-liu}, there exists an integer $N>0$ such that $N([c_1]-[c_1])\in CH^1(C)^0$. Then  $Np^s\alpha =N(c_0^* - c_1^*)y= y^*N([c_0]-[c_1]) \in CH^i(X)^0$ by Proposition \ref{l6}, where $y$ is considered as a correspondence (Weil-Bloch trick). This concludes the proof of Theorem \ref{p2}.

\begin{rque}\label{r9} There is a statement parallel to Theorem \ref{p2} in \cite[Lemma 2.2.2 b)]{beilinson}, with a similar proof.
\end{rque}

\subsection{Example: elliptic curves}\label{s5.3} In Step 1 of the proof of Theorem \ref{p2}, suppose $\delta=1$, $B$ projective and that $X$ is an elliptic curve. Applying $\deg:CH^1(B)\to \Z$, we get an integral pairing $\langle,\rangle$ on the finite index subgroup $CH^1(X)^0\cap X(K)$ of $CH^1_\alg(X)=\Pic^0(X)=X(K)$; this pairing coincides with the Néron-Tate height pairing by the description in \cite[Th. III.9.3]{silverman}. 

\begin{thm}\label{t5.2} Assume $k$ algebraically closed. Then\\
a) $CH^1(X)^0\cap X(K)$ contains the subset denoted by $X(K)_0$ in \cite[Rem. III.9.4.2]{silverman}.\\ 
b) If $\sX$ is a minimal model, $X(K)_0$ is a subgroup and the pairing
\begin{equation}\label{eq5.11}
X(K)_0\times X(K)_0\to \Pic(B)
\end{equation}
of loc. cit., Th. III.9.5 (b) equals $-\langle,\rangle$.
\end{thm}

\begin{proof} a) Let $P\in X(K)$. As in  Lemma \ref{l5.1}, write $\tilde P:B\to \sX$ for the section of $f$ extending $P$ (here its existence as a morphism is automatic since $\delta=1$, by the valuative criterion of properness). What is written $[\tilde P(B)]=\tilde P_*[B]$ in its proof of is denoted by $(P)$ in \cite{silverman}.  Since
\[(P)\cdot [\sX_b]=\tilde P_*[B]\cdot f^*b=[B]\cdot \tilde P^*f^*b=\deg(b)=1\]
(projection formula), and the intersection numbers of $(P)$ with the components of $\sX_b$ are $\ge 0$, this implies that $P$ meets exactly one component of $\sX_b$, with multiplicity $1$. 

By definition, $X(K)_0$ is the set of $P$'s such that $(P)$ meets the same component of $\sX_b$ as $(0)$ for all $b\in B^{(1)}$. Equivalently, $\deg(((P)-(0))\cdot [\sX_b^\lambda])=0$ for all $b$ and all such components. By \eqref{eq6}, this degree is none else than $\langle (P)-(0),\sX_b^\lambda\rangle_b$, so we get that $P\in X(K)_0$ $\Rightarrow$ $(P)-(0)\in CH^1(\sX)^0$ $\Rightarrow$ $P-0\in CH^1(X)^0$.

b)  What we use here is that 
\begin{equation}\label{eq5.12}
\tilde P =\tau_P\circ \tilde 0
\end{equation}
for all $P\in X(K)$, where $\tau_P$ is the translation by $P$ \cite[Prop. III.9.1]{silverman}. This already implies that $X(K)_0$ is a subgroup of $X(K)$.

We start with a convenient description of \eqref{eq5.11} by reformulating part (a) of \cite[Th. III.9.5]{silverman}. For $P,Q\in X(K)$, we have $j^*((P+Q)-(P)-(Q)+(0))=0$ in $\Pic^0(X)$; the sequence
\[0\to \Pic(B)\by{f^*} \Pic(\sX)\by{j^*} \Pic(X)\to 0\]
is exact except at $\Pic(\sX)$ where its homology is given by $\bigoplus_b CH_1(\sX_b)/[\sX_b]$ (see \cite[3.2 a)]{langneron}). If now $P,Q\in X(K)_0$, then
\begin{multline*}
(P+Q)-(P)-(Q)+(0)= ((P+Q)-(0)) -((P)-(0)) -((Q)-(0))\\
\in CH^1(\sX)^0
\end{multline*}
which implies that its homology class is $0$ by the non-degeneracy of the intersection pairings on $CH_1(\sX_b)/[\sX_b]$. Thus $(P+Q)-(P)-(Q)+(0)=f^*[P,Q]$ for a unique $[P,Q]\in \Pic(B)$. In particular,
\begin{equation}\label{eq5.13}
[P,Q]=\tilde R^*((P+Q)-(P)-(Q)+(0)) \quad \forall\ R\in X(K).
\end{equation}

For convenience, we now write
\[ P*Q = f_*((P)\cdot (Q))\in \Pic(B)\]
for $P,Q\in X(K)$.

\begin{lemma} We have the identities $P*Q =\tilde Q^*(P)$ and $P*Q = 0*(P-Q)$.
\end{lemma}

\begin{proof} For the first identity,
\[f_*((P)\cdot (Q))=f_*(\tilde P_*[B]\cdot \tilde Q_*[P])=f_*(\tilde Q_*\tilde Q^*\tilde P_*[B])=\tilde Q^*(P) \]
by the projection formula. For the second one,
\begin{multline*}
\tilde Q^*(P)=\tilde 0^*\tau_Q^*(\tau_P)_*\tilde 0_*[B] =\tilde 0^*(\tau_Q)_*^{-1}(\tau_P)_*\tilde 0_*[B]\\
=\tilde 0^*(\tau_{P-Q})_*\tilde 0_*[B]=\tilde 0^*(P-Q).
\end{multline*}
\end{proof}

\begin{rque} Since $P*Q=Q*P$, we also get the intriguing identity $0*P = 0*(-P)$.
\end{rque}

To prove the claim of Theorem \ref{t5.2} b), we now apply \eqref{eq5.13} with $R=Q$:
\begin{multline*}
[P,Q]=\tilde Q^*((P+Q)-(P)-(Q)+(0))\\
= (P+Q)*Q -P*Q -Q*Q +0*Q\\
= 0*P  -P*Q-0*0 +0*Q\\
= P*0  -P*Q-0*0 +0*Q=-\langle P,Q\rangle.
\end{multline*}
\end{proof}

\begin{rques}\label{r5.1} a) We have $X(K)_0=\sN^0(B)$, where $\sN^0$ is the identity component of the Néron model $\sN$ of $X$. Indeed, $\sN$ is isomorphic to the smooth locus $\sX_\sm$ of $\sX$ \cite[Prop. 1.15]{artin} and $\sN^0$ contains the $0$-section $(0)$. For any $P\in X(K)$, $(P)\subset \sX_\sm$ (see end of Step 2 in the proof of Proposition \ref{pa}), and  $(P)\in \sN^0$ if and only if $P\in X(K)_0$ since $\sN^0_b$ is the identity component of $\sN_b$ for all $b\in B^{(1)}$ by definition of $\sN^0$. \\
b) Suppose that $P-0\in CH^1(X)^0$. We can find a fibral divisor $\xi$ such that $(P)-(0)-\xi$ is orthogonal to all fibral divisors (as in the proof of Lemma \ref{l5.1}, with $N=1$), and this divisor is unique modulo $\IM f^*$ by \cite[Prop. III.8.3]{silverman}. By loc. cit., Lemma III.9.4 (or by a)), $X(K)/X(K)_0=X(K)/\sN^0(B)$ is finite, so the class of $\xi$ is torsion in each $CH_1(\sX_b)/[\sX_b]$. Thus $CH^1(X)^0/\sN^0(B)\inj \bigoplus_{b\in B^{(1)}} (CH_1(\sX_b)/[\sX_b])_\tors$.
\end{rques}

\section{The pairing in codimension $1$}\label{s5}

In this section, we assume $X$ projective and geometrically irreducible. 
Recall that $\delta=\trdeg(K/k)=\dim B$. We shall study the height pairing \eqref{eq9} for $i=1$, in $\Ab\otimes\Q$; note that $CH^i(X)^{(0)}=CH^i_\num(X)$ for $i=1,d$ by  Proposition \ref{p2}.

\subsection{A general result} We write $T(X)\subset CH^{d}_\num(X)=CH^{d}(X)_0$ for the Albanese kernel. For an abelian $K$-variety $A$, write $\Tr_{K/k} A$ for its $K/k$-trace and 
\[\LN(A,K/k) = A(K)/(\Tr_{K/k} A)(k)\]
for its \emph{Lang-Néron group}: it is finitely generated by the Lang-Néron theorem \cite{LN}. We shall need the following classical fact:

\begin{lemma}\label{l5.2} The Albanese map $a_X:CH^d(X)_0\to \Alb_X(K)$ has a cokernel of finite exponent.
\end{lemma}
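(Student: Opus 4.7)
I will first reduce to the case where $X$ has a $K$-rational point, and then exploit the natural morphism $X^g\to \Alb_X$ to produce a uniform exponent for the cokernel.

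For the reduction, pick a finite extension $L/K$ with $X(L)\neq\emptyset$ (for instance, the residue field of any closed point of $X$). Since the Albanese construction commutes with base change and with norms, one has $a_X\circ \pi_* = N_{L/K}\circ a_{X_L}$ for $\pi\colon X_L\to X$, while the composition $\Alb_X(K)\hookrightarrow \Alb_X(L)\xrightarrow{N_{L/K}} \Alb_X(K)$ is multiplication by $[L:K]$. Therefore any finite-exponent bound on $\Coker(a_{X_L})$ yields one on $\Coker(a_X)$ at the cost of a factor $[L:K]$. So assume $X(K)\neq\emptyset$ and fix $x_0\in X(K)$; let $a\colon X\to A:=\Alb_X$ be the Albanese morphism with $a(x_0)=0$.

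For the main step, I would use the classical fact that $A(\bar K)$ is generated as a group by $a(X(\bar K))$, so that the morphism
\[a_g\colon X^g\longrightarrow A,\qquad (x_1,\dots,x_g)\mapsto \sum\nolimits_i a(x_i),\qquad g:=\dim A,\]
is surjective. I then build a quasi-section as follows: take a closed point of the generic fiber $a_g^{-1}(\eta_A)$ of minimal degree $N$ over $K(A)$; its closure in $X^g$ is an integral closed subscheme $S\subset X^g$ whose projection $S\to A$ is finite surjective of generic degree $N$. After normalizing $S$, miracle flatness (applicable because $A$ is regular) makes $S\to A$ flat away from a closed subset of $A$ of codimension $\geq 2$. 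For $y\in A(K)$ in the flat locus, the scheme-theoretic fiber $S_y$ is a $0$-cycle of degree $N$ on $X^g$; summing its pushforwards along the $g$ coordinate projections $X^g\to X$ yields a $0$-cycle $w_y$ on $X$ of degree $gN$ satisfying $a_X(w_y-gN[x_0]) = N\cdot y$. For $y$ in the non-flat locus, a translation by a $K$-rational point of $A$ reduces to the previous case (using that $K$ is infinite, since $\delta=\operatorname{trdeg}(K/k)\geq 1$). Hence $N$ kills $\Coker(a_X)$.

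The hard part will be controlling the quasi-section $S\to A$ together with the behaviour over the non-flat locus: the translation argument needs $A(K)$ to meet every non-empty open of $A$, which can fail in degenerate cases where $A(K)$ lies in a proper closed subvariety. In such cases one must either pass to an isogeny factor of $A$ containing $A(K)$ and iterate, or replace $S\to A$ by a finite flat cover obtained after a further alteration; this is the technical point I expect to require the most care.
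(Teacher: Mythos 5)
Your reduction to the case $X(K)\neq\emptyset$ and the construction over the flat locus are essentially sound, modulo a small imprecision: the closure $S$ of a closed point of the generic fibre of $a_g$ is only \emph{proper} and generically finite over $A$, not finite; finiteness (hence, over codimension-$1$ points of $A$, flatness by torsion-freeness over a DVR — no normalization or miracle flatness needed) holds only away from a closed subset $Z\subset A$ of codimension $\ge 2$, by the argument of Proposition \ref{p6}. The genuine gap is the one you flag yourself: the points $y\in A(K)$ lying in $Z$. Your translation argument needs some $t\in A(K)$ with $t\notin Z$ and $y-t\notin Z$, i.e.\ it needs $A(K)\not\subseteq Z\cup(y-Z)$. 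There is no general reason for this: the Zariski closure of $A(K)$ is a finite union of cosets of an abelian subvariety $A'$, which may have dimension $\le g-2$ and may well be contained in $Z$ even when $A(K)$ is infinite (recall $A(K)$ contains $(\Tr_{K/k}\Alb_X)(k)$, which can be huge without being dense in $A$; no Mordell--Lang-type statement is available here). Your two fallback suggestions do not close this: passing to an isogeny factor changes the group whose cokernel must be controlled and does not restore density, and a flattening modification of $A$ destroys the group structure and need not admit lifts of the bad $K$-points. So, as written, the proof is incomplete at exactly the step you identified.

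The step is repairable inside your framework without any flatness or density input: since $A$ is smooth, $y:\Spec K\to A$ is a regular embedding of codimension $g$, so the refined Gysin class $y^![S]\in CH_0(S_y)$ is defined for \emph{every} $y\in A(K)$ and has degree $N$ (push forward to $A$ and use $p_*[S]=N[A]$); pushing it to $X^g$ and summing over the coordinate projections yields the cycle $w_y$ with $a_X(w_y-gN[x_0])=Ny$ in all cases. You should also note that the paper's own proof sidesteps the whole construction: it takes a smooth linear section curve $i:C\inj X$ through a rational point, uses that $a_C$ is bijective, and that $\Pic^0_X\to\Alb_C\by{i_*}\Alb_X$ is an isogeny by Murre's lemma, whence $\Coker(i_*(K))$, and a fortiori $\Coker(a_X)$, has finite exponent — a considerably shorter route to the same uniform bound.
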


\begin{proof} This could be deduced from \cite[Prop. A.1]{griff}; here is a different and more direct proof. Choose a smooth irreducible multiple hyperplane section of dimension $1$ $i:C\inj X$. By the usual transfer argument, we may assume that $X$ has a rational point lying on $C$. Then $a_C$ is bijective. By \cite[Lemma 2.3]{murre}, the composition
\begin{equation}\label{eq5.10}
\Pic^0_X\by{i^*}\Pic^0_C=\Alb_C\by{i_*} \Alb_X
\end{equation}
is an isogeny, hence $\Coker i_*(K)$ has finite exponent and so does its quotient $\Coker a_X$.
\end{proof}

\begin{thm}\label{t2} a) The pairing $\langle, \rangle$ vanishes on  $CH^1_\num(X)\times T(X)$. \\
b) This induces a pairing (in $\Ab\otimes \Q$)
\[\langle, \rangle:\Pic^0(X)\times \Alb_X(K)\to CH^1(B).\]
c) Suppose $B$ projective. Composing this pairing with the projection $CH^1(B)\allowbreak\to N^1(B)$ (where $N^1(B)$ is the group of cycles of codimension $1$ modulo numerical equivalence) induces a pairing
\begin{equation}\label{eq5.2}
\langle, \rangle_\num:\LN(\Pic^0_X,K/k)\times \LN(\Alb_X,K/k)\to N^1(B).
\end{equation}
\end{thm}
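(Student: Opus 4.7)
The plan is to reduce each assertion to the case where $X=A=\Alb_X$ is an abelian variety via the Albanese morphism $a\colon X\to A$, then to exploit translation-invariance and, for (c), explicit computations on an isotrivial product model.

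For part (a), I would use the classical fact that $a^*\colon\Pic^0(A)\to\Pic^0(X)$ is an isomorphism in $\Ab\otimes\Q$, together with Matsusaka's identification $CH^1_\num(X)\simeq\Pic^0(X)$ in the same category. Writing $\alpha=a^*\alpha'$ and applying the correspondence functoriality of Theorem \ref{p3} yields $\langle\alpha,\beta\rangle_X=\langle a^*\alpha',\beta\rangle_X=\langle\alpha',a_*\beta\rangle_A$. Since $a_X$ factors as the Albanese summation composed with $a_*$, the hypothesis $\beta\in T(X)$ gives $a_*\beta\in T(A)$, reducing the problem to the abelian case. There, the theorem of the square gives $\tau_c^*\alpha'=\alpha'$ in $\Pic(A)=CH^1(A)$ for all $c\in A(K)$, so Example \ref{ex1} yields $\langle\alpha',\beta'\rangle=\langle\alpha',\tau_{-c}^*\beta'\rangle$. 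Applied to $\beta'=[c_1]-[c_2]$, this forces $h(c):=\langle\alpha',[c]-[0]\rangle$ to be a group homomorphism $A(K)\to CH^1(B)$ in $\Ab\otimes\Q$ through which $\langle\alpha',-\rangle_A$ factors, and so it vanishes on $T(A)=\Ker(\text{Albanese summation})$. Part (b) is then formal from (a), Lemma \ref{l5.2} and the identification $CH^1_\num(X)\simeq\Pic^0(X)$ modulo torsion.

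For part (c), the Albanese reduction still applies, as the Chow $K/k$-trace is functorial under the isogeny $\Alb_X^\vee\to\Pic^0_X$, so it suffices to show that the abelian-case pairing $\Pic^0(A)\times A(K)\to CH^1(B)$ lands in $\Pic^0(B)\subseteq\Ker(CH^1(B)\to N^1(B))$ whenever either argument is a trace element. After a finite cover $B'\to B$ with constant field $k'$, Proposition \ref{p5}(iv) combined with the injectivity up to torsion of $N^1(B)\to N^1(B')$ allows me to assume that this trace element is genuinely constant, coming from $A_0(k')$ or $\Pic^0(A_0)(k')$ for some $k'$-abelian variety $A_0$ with product model $\sA=A_0\times_{k'}B'$. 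In the ``constant $\alpha$'' case, writing $\alpha=p_1^*\alpha_0$ and noting $s_0^*p_1^*\alpha_0=0$ by the projection formula, pairing against the graph of $\beta$ yields $\langle\alpha,\beta\rangle=\beta^*\alpha_0\in\Pic^0(B')$. In the symmetric ``constant $\beta$'' case, the see-saw decomposition
\[\Pic(A_0\times_{k'}B')=p_1^*\Pic(A_0)\oplus p_2^*\Pic(B')\oplus(1\times h)^*\sP\]
(with $\sP$ the Poincaré bundle on $A_0\times A_0^\vee$ and $h\colon B'\to A_0^\vee$) shows that each summand contributes either zero or an element of $\Pic^0(B')$ to the relevant difference $s_{\beta_0}^*\tilde\alpha-s_0^*\tilde\alpha$.

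The main obstacle will be the isotrivial-reduction step in (c): while the Chow trace abstractly produces constant sections, organising a semi-stable model of $A$ over $B'$ compatible with the height-pairing machinery of Theorem \ref{p3}, checking compatibility with the admissible alterations used to define \eqref{eq9}, and descending the vanishing from $N^1(B')$ back to $N^1(B)$ is where the real technical work lies; the remaining reductions and explicit computations are relatively formal.
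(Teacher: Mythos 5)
Your parts a) and b) follow essentially the paper's route: extend scalars (a monomorphism on $CH^1(B)$ in $\Ab\otimes\Q$ by transfer) to reduce to $X=\Alb_X$ via the Albanese correspondence and \eqref{eq19a}, then use Example \ref{ex1} and the theorem of the square; your repackaging of the computation as ``$c\mapsto\langle\alpha',[c]-[0]\rangle$ is a homomorphism'' is the same argument the paper runs directly on $[a+b]-[a]-[b]+[0]$. One point to make explicit: an element of $T(A)$ is not a priori a combination of $K$-rational points, so the factorisation of $\langle\alpha',-\rangle$ through $A(K)$ only makes sense after writing $\beta_{\bar K}$ in terms of $\bar K$-points and descending to a finite extension $L/K$ — the same scalar-extension step you already use to get a rational point, but it must also be invoked here.

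Part c) is where there is a genuine problem. You set out to prove something strictly stronger than what is asserted — that the pairing against a trace element lands in $\Pic^0(B)$ — via an isotrivial model, the see-saw decomposition and the Poincar\'e bundle, and you yourself concede that the key reduction (producing a compatible semi-stable product model and descending from $N^1(B')$ to $N^1(B)$) is ``where the real technical work lies,'' i.e.\ it is not done. None of this is needed. The statement only requires that the composite pairing into $N^1(B)$ kill $(\Tr_{K/k}\Pic^0_X)(k)$ and $(\Tr_{K/k}\Alb_X)(k)$. After reducing to $k$ algebraically closed, these are groups of $k$-points of abelian varieties, hence divisible, whereas $N^1(B)$ is finitely generated; a homomorphism from a divisible group to a finitely generated abelian group is zero. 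This soft argument is the paper's entire proof of c). (The stronger statement you are aiming at is essentially Proposition \ref{p5.4}, which is treated separately and rests on Proposition \ref{p5.1}; it is not available at this stage by the elementary means you sketch.)
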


\begin{proof} a) Up to extending scalars to the perfect closure of $k$, we may assume $k$ perfect. Let $L/K$ be a finite extension. Let $B_L$ be the normalisation of $B$ in $L$; up to removing from $B$ a closed subset $F$ of codimension $\ge 2$ and from $B_L$ the inverse image of $F$ (which does not affect $CH^1(B)$ or $CH^1(B_L)$), we may assume $B_L$ smooth. In $\Ab[1/p]$, the map $CH^1(B)\to CH^1(B_L)$ is a monomorphism (transfer argument). In view of  the functoriality in Theorem \ref{p3} b), to prove the vanishing we may thus increase scalars as much as we wish. In particular, we may assume that $X(K)\neq \emptyset$.

Let $x\in X(K)$ and let $a:X\to \Alb_X$ be the corresponding Albanese map. Then $a$ induces an isomorphism $a^*:\Pic^0(\Alb_X)\iso \Pic^0(X)$, and $a_*:CH_0(X)\to CH_0(\Alb_X)$ sends $T(X)$ into $T(\Alb_X)$. Still by functoriality, we are reduced to the case $X=\Alb_X=:A$.

The sequel is inspired by Néron's proof of \cite[Prop. 7]{neron}. In order to reason with elements, pick a representative of $\langle, \rangle$ in $\Ab$ as in Remark \ref{r10} a). Let $\beta\in T(A)$, and let $\bar K$ be an algebraic closure of $K$. In $T(A_{\bar K})$, we may write $\beta_{\bar K}= \sum_i ([a_i+b_i]-[a_i]-[b_i]+[0])$, with $a_i,b_i\in A(\bar K)$. Choose $L/K$ finite such that all $a_i$'s are rational over $L$. As above, we may extend scalars from $K$ to $L$, and thus reduce to $\beta=[a+b]-[a]-[b]+[0]$ for $a,b\in A(K)$. The vanishing now follows from Example \ref{ex1} and the theorem of the square \cite[II.6, Cor. 4]{mumford}.

b) follows immediately from a) and  Lemma \ref{l5.2}, which implies that \allowbreak $CH^d(X)_0/T(X)\to \Alb_X(K)$ is an isomorphism in $\Ab\otimes\Q$. 

c) We may assume $k$ algebraically closed; then the claim follows from the divisibility of $Y(k)$ for an abelian $k$-variety $Y$ and the finite generation of $N^1(B)$.
\end{proof}

\subsection{Another conjecture} For the needs of Theorem \ref{t3} below, we introduce a new conjecture. From now on, $B$ is projective as in Theorem \ref{t2} c).

Let $R$ be a discrete valuation ring with quotient field $K$ and residue field $E$. Suppose that an abelian $K$-variety  $A$ has good reduction with respect to $R$; then its Néron model $\sA$ is an abelian scheme over $\Spec R$, whose special fibre $A_s$ is an abelian $E$-variety. We have a specialisation homomorphism
\begin{equation}\label{eq5.8}
A(K)=\sA(R)\to A_s(E).
\end{equation}

Suppose now that $R$ contains $k$. The notion of $K/k$-trace readily extends to a notion of $R/k$-trace for abelian $R$-schemes; viewing these traces as right adjoints shows that
\begin{itemize}
\item $\Tr_{R/k} \sA$ exists and equals $\Tr_{K/k} A$;
\item the `special fibre' functor yields a canonical morphism $\Tr_{K/k} A\to 
\Tr_{E/k} A_s$. 
\end{itemize}

It follows that \eqref{eq5.8} induces a homomorphism of Lang-Néron groups
\begin{equation}\label{eq5.9}
\LN(A,K/k)\to \LN(A_s,E/k).
\end{equation}

\begin{conj}\label{co4} Assume that $A$ has semi-stable reduction at every point of $B^{(1)}$, and that $\delta>1$. For any projective embedding $B\inj \P^N$, there exists a smooth, geometrically connected hyperplane section $h$ of $B$ such that $A$ has good reduction at $h$ and the kernel of \eqref{eq5.9} is finite, with  $E=k(h)$.
\end{conj}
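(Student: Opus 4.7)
The plan is to combine Bertini's theorem with a specialization theorem of Silverman--Wazir type for Lang--Néron groups. Up to replacing $k$ by its perfect closure (Subsection \ref{s3.1}) and further by $\bar k$, I may assume $k$ algebraically closed, since both the existence of $h$ and the finiteness of the kernel descend (for the latter, via the transfer argument used in the proof of Theorem \ref{t2}).

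Let $U \subseteq B$ be the open locus where $A$ has good reduction; by the semi-stability hypothesis, $B\setminus U$ has only finitely many codimension-$1$ components $\Sigma_1,\dots,\Sigma_m$, namely the codim-$1$ points where $A$ is semi-stable but not good, and $A$ extends to an abelian scheme $\sA\to U$. A generic hyperplane section $h$ in a sufficiently high Veronese re-embedding $B\inj \P^M$ is smooth, geometrically connected (Bertini, in the form of Jouanolou), and distinct from each $\Sigma_i$; its generic point then lies in $U$, giving good reduction of $A$ at $h$ and defining \eqref{eq5.9}.

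The crux is that for such generic $h$, \eqref{eq5.9} has rank-zero kernel. Reduce to the case $\Tr_{K/k} A = 0$ by passing to $G = A/\Tr_{K/k} A$, which has vanishing trace; up to finite groups, $\LN(A,K/k)\otimes\Q \simeq G(K)\otimes\Q$. The argument then splits into (a) \emph{trace stability}: for generic $h$, $\Tr_{k(h)/k} G_h$ still vanishes; and (b) \emph{zero-locus avoidance}: given (a), the kernel on the trace-free side consists of sections $\tilde P \in \sG(U)$ whose restriction $\tilde P|_h$ is identically zero. But for $\tilde P$ not the zero section, the equality locus $Z_{\tilde P} = \{u \in U : \tilde P(u) = 0\}$ is a proper closed subset of $U$ (two sections to a separated scheme over an integral base agreeing on a dense open agree everywhere). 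Since $\LN(G,K/k)$ is finitely generated, finitely many $Z_{\tilde P}$ — one per generator, together with the loci $Z_{N\tilde P}$ controlling torsion — suffice, and a generic $h$ (dense-open in the parameter space of hyperplane sections, provided $k$ is uncountable) avoids all of them.

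The principal obstacle is (a), the stability of the vanishing trace: one must rule out the appearance of new isotrivial factors on $G_h$ that absorb the image of $G(K)$. In characteristic zero this should follow from the Lefschetz hyperplane theorem for $\pi_1$ (SGA 2) applied to the local system $R^1 f_* \Q_l$ on $U$: non-triviality of the monodromy on $\pi_1(U)$ encodes $\Tr_{K/k} G = 0$, and the surjection $\pi_1(h\cap U) \twoheadrightarrow \pi_1(U)$ propagates it to $h$. In positive characteristic, wild ramification at $B\setminus U$ and the subtlety of Lefschetz-type theorems for $\pi_1^{\et}$ make this considerably harder and likely require input from the theory of semisimple $l$-adic sheaves (Deligne, Drinfeld). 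A secondary subtlety in (b) is the countable-union issue when $k$ itself is countable, which I propose to handle either by an initial base change to a transcendental extension or via effective bounds on the torsion orders of $\tilde P|_h$ from $l$-adic monodromy.
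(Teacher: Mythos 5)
The statement you are addressing is labelled Conjecture \ref{co4} in the paper and is left \emph{open} there: the author only offers evidence in the constant case, where \eqref{eq5.9} becomes $\Hom_k(\Alb_B,A)\to\Hom_k(\Alb_h,A)$ and injectivity follows from the surjectivity of $\Alb_h\to\Alb_B$, and even adds a remark worrying that the hypotheses may be too weak. So your text must be judged as an attack on an open problem, and as it stands it is a strategy outline rather than a proof — partly by your own admission (you leave point (a) open in positive characteristic and defer the countable-field issue) and partly because of gaps you do not flag.

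The unflagged gaps are these. First, the reduction to $\Tr_{K/k}A=0$ via $G=A/\Tr_{K/k}A$ is false as stated: the kernel of $\LN(A,K/k)\to G(K)$ contains $(\Tr_{K/k}A)(K)/(\Tr_{K/k}A)(k)\cong\Hom_k(\Alb_B,\Tr_{K/k}A)$, which has positive rank in general, so $\LN(A,K/k)\otimes\Q$ and $G(K)\otimes\Q$ need not agree; the constant part must be handled separately (this is exactly the case the paper settles) and the two pieces spliced. Second, in step (b) avoiding $Z_{\tilde P_i}$ for one generator $P_i$ at a time only guarantees $P_i|_h\neq 0$; it does not prevent a nontrivial combination $\sum n_iP_i$ from dying on $h$, so the kernel can still have positive rank. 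Controlling the whole kernel forces you to consider all primitive integer combinations — countably many conditions, i.e.\ precisely the countability problem you postpone — unless you replace this by a height or monodromy argument giving injectivity modulo torsion in one stroke; nothing in your sketch does that. Third, even in characteristic zero, point (a) needs more than the surjection $\pi_1(h\cap U)\twoheadrightarrow\pi_1(U)$: passing from ``no new monodromy invariants'' to ``no new trace'' uses the theorem of the fixed part and should be written out; in characteristic $p$ you concede the step is open. Finally, the conjecture demands a hyperplane section for the \emph{given} embedding $B\inj\P^N$; your Veronese re-embedding yields a higher-degree hypersurface section, hence a weaker statement (and is unnecessary once $k$ is algebraically closed, where classical Bertini applies to the original embedding). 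In short: the skeleton is sensible, but none of the three load-bearing steps (trace stability, injectivity on the free part, treatment of the constant part) is actually established.
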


Suppose $A$ constant. Then \eqref{eq5.9} may be rewritten as
\[
\Hom_k(\Alb_B,A)\to \Hom_k(\Alb_h,A),
\]
and Conjecture \ref{co4} follows from the surjectivity of $\Alb_h\to \Alb_B$ (see \eqref{eq5.10}). This gives some evidence for this conjecture.

\begin{rque} Perhaps the hypotheses of Conjecture \ref{co4} are too weak. In any case, we only need it in the special case $A=\Pic^0_X$, when $X$ satisfies the conclusion of Lemma \ref{ldJ} (or any suitable variant of it); it may be easier to prove in such a case.
\end{rque}

\subsection{A technical lemma} This lemma will be needed in the proofs of Theorem \ref{t3} and Proposition \ref{p5.4} below.

\begin{lemma}\label{ldJ} Suppose that $d=1$. Then there exists an alteration $\tilde B\to B$, with $\tilde B$ smooth, such that $X\otimes_K k(\tilde B)$ has a projective model $f:\sX\to \tilde B$ where $\sX$ is smooth over $k$ and, for all $b\in \tilde B^{(1)}$, the irreducible components of $\sX_b$ are smooth over $k(b)$.
\end{lemma}

\begin{proof} Start from a projective  embedding $X\inj \P^N_K$ and consider its closure $\sX_0$ in $\P^N_B$. In the following reasoning using results of \cite{dJ2}, we always take the group $G$ appearing there equal to $1$. By \cite[Th. 5.9]{dJ2} (or just \cite[Th. 2.4 and Lemma 5.7]{dJ2}), we may (projectively) alter $f_0:\sX_0\to B$ into $f_1:\sX_1\to B_1$  so that $f_1$ is a projective quasi-split semi-stable curve in the sense of \cite[\S\ after Lemma 5.6]{dJ2}. This condition is stable under base change, hence, by the reasoning at the end of the proof of \cite[Th. 5.13]{dJ},  we may alter $B_1$ into $B_2$ so that $B_2$ is smooth and $f_2:\sX_2:=\sX_1\times_{B_1} B_2\to B_2$ verifies the hypotheses of \cite[Prop. 5.11]{dJ2} (note that varieties over a field verify \cite[(5.12.1)]{dJ2} by \cite[Th. 4.1]{dJ}); in particular, $\tilde B:=B_2$ is smooth.  Next, the beginning of the proof of  \cite[Prop. 5.11]{dJ2} yields a modification $\pi:\sX_3\to \sX_2$ such that the singular locus $\Sigma$ of $\sX_3$ is smooth of codimension $\ge 3$ and $f_3:\sX_3\to \tilde B$ is still a quasi-split semi-stable curve. The end of this proof then yields a desingularisation $\sX_4$ of $\sX_3$ by blowing up the components of $\Sigma$. Since they lie over points of codimension $\ge 2$ in $\tilde B$, this does not affect the fibres of $f_3$ at points of codimension $1$, so $f_4:\sX_4\to \tilde B$ is ``quasi-split semi-stable in codimension $1$''.

We are left to desingularise the singular components of $(\sX_4)_b$ for all $b\in \tilde B^{(1)}$. Let $D$ be such a component, and let $x$ be a singular point of $D$. Note that $x$ does not lie on any other component, since all singular points of $(\sX_4)_b$ are quadratic by the ``semi-stable'' condition. By the ``quasi-split'' one, the completion of $\sO_{\sX_4,x}$ is isomorphic to $k[[u,v,t_1,\dots, t_\delta]]/(uv-t_1)$, where $t_1$ is a local equation of $D$ (compare \cite[2.16]{dJ}). The ideal of $x$ is $(u,v,t_1)$. Blowing up this ideal retains the regularity of $\sX_4$, separates the two branches of $D$ at $x$ (making its strict transform regular at the two corresponding points) and adds a smooth irreducible exceptional divisor. We have therefore decreased by $1$ the total number of singular points of the irreducible components of $(\sX_4)_b$. Since only finitely many $b$'s are involved, we end the process after a finite number of steps.
\end{proof}

\subsection{A negativity theorem}

\begin{thm}\label{t3} Let $L\in \Pic(X)$ and $\ell\in \Pic(B)-\{0\}$. Consider the quadratic form   
\[q=q(X,B,L,\ell):\LN(\Pic^0_X,K/k)\ni \alpha\mapsto \deg\left(\langle \alpha,L^{d-1}\alpha\rangle_\num\cdot \ell^{\delta-1}\right)\]
obtained from the pairing of Theorem \ref{t2} c). If $L$ is ample and $\delta=1$ (hence $\ell^{\delta-1}=1$),  then $q(X,B,L,\ell)$ is negative definite (in particular, non-degenerate).  If  Conjecture \ref{co4} holds for $\Pic^0_X$ when $d=1$ and in the situation of Lemma \ref{ldJ}, this extends to $\delta>1$ for $\ell$ ample. 
\end{thm}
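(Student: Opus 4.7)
The plan is to deduce negative-definiteness from the Hodge index theorem (Khovanskii--Teissier form) applied to $\sX$, after reducing to the semi-stable case via Proposition \ref{p3.2} and Lemma \ref{ldJ}.

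\textbf{Case $\delta=1$.} Fix a lift $\tilde L\in\Pic(\sX)$ of $L$ and, using Lemma \ref{l2}, a lift $\tilde\alpha\in CH^{1}(\sX)^{0}$ of $\alpha$; by Proposition \ref{p1}, $q(\alpha)=(\tilde\alpha^{2}\cdot\tilde L^{d-1})_\sX$. After replacing $\tilde L$ by $\tilde L+Nf^{*}H'$ for $H'$ ample on $B$ and $N$ sufficiently large, we may assume $\tilde L$ is ample on $\sX$: since $\alpha$ is numerically trivial on $X$ and $(f^{*}H')^{k}=0$ for $k\ge 2$ (as $\dim B=1$), all corrections in the expansion vanish and the value $q(\alpha)$ is unaffected. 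Next, I adjust $\tilde\alpha$ by a rational multiple of $f^{*}[\mathrm{pt}]$ to arrange $(\tilde\alpha\cdot\tilde L^{d})_\sX=0$; this does not alter $j^{*}\tilde\alpha=\alpha$ and preserves the $CH^{1}(\sX)^{0}$-condition, because $f^{*}[\mathrm{pt}]$ is trivial in $\Pic(\sX_{(b)})$ and so restricts to zero on each component of the fibre above $[\mathrm{pt}]$. The Hodge index theorem for the smooth projective $(d+1)$-fold $(\sX,\tilde L)$ then gives $q(\alpha)\le 0$, with equality iff $\tilde\alpha$ is torsion in $\NS(\sX)$; in the equality case some multiple $N\tilde\alpha$ lies in $\Pic^{0}(\sX)=\Pic^{0}_\sX(k)$, and the canonical restriction morphism $\Pic^{0}_\sX\otimes_{k}K\to\Pic^{0}_X$ of abelian $K$-varieties factors through $\Tr_{K/k}\Pic^{0}_X\otimes_{k}K$ by the universal property of the $K/k$-trace, so $N\alpha$ comes from $(\Tr_{K/k}\Pic^{0}_X)(k)$ and is therefore zero in $\LN(\Pic^{0}_X,K/k)$.

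\textbf{Case $\delta>1$.} I first reduce $d$ to $1$. Taking $H_1,\ldots,H_{d-1}$ general in $|L|$ yields a smooth complete intersection curve $C\subset X$ such that $\Pic^{0}(X)\to\Pic^{0}(C)$ has finite kernel (weak Lefschetz). After a further alteration provided by Lemma \ref{ldJ}, the closure $\sC\subset\sX$ gives a semi-stable fibration $\sC\to B$ with $1$-dimensional fibres; writing $\tilde L^{d-1}=[\sC]+\beta$ with $\beta$ supported on the fibres of $f$, the term $(\tilde\alpha^{2}\cdot\beta)$ vanishes by the $CH^{1}(\sX)^{0}$-condition, which yields $q_X(\alpha)=q_C(\alpha|_C)$. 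I then reduce $\delta$ to $1$: by Conjecture \ref{co4} applied iteratively to $A=\Pic^{0}_C$ (in the setup of Lemma \ref{ldJ}), there exist smooth transverse hyperplane sections $h_1,\ldots,h_{\delta-1}$ of $B$ whose intersection $C_B$ is a smooth curve representing $\ell^{\delta-1}$, and the composite restriction $\LN(\Pic^{0}_C,K/k)\to\LN(\Pic^{0}_{C_{k(C_B)}},k(C_B)/k)$ has finite kernel. The base-change formula \eqref{eq20} identifies $q_C(\alpha|_C)$ with the corresponding form on the $\delta=1$ fibration $\sC\times_{B}C_B\to C_B$, which is negative-definite by the previous case; the finiteness of the kernels of the two restriction maps then transfers negative-definiteness back to $q_{X,B,L,\ell}$.

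\textbf{Main obstacle.} The most delicate step is the equality case of Hodge index in Case $\delta=1$: one must exploit the well-definedness and universal property of the canonical morphism $\Pic^{0}_\sX\otimes_{k}K\to\Pic^{0}_X$ to match cycles numerically trivial on $\sX$ with classes coming from the $K/k$-trace. A secondary (and more bookkeeping) difficulty is the compatibility of the reduction $d\to 1$ in Case $\delta>1$ with the semi-stable setup and the decomposition $\tilde L^{d-1}=[\sC]+\beta$, which is what forces the recourse to Lemma \ref{ldJ} and to Conjecture \ref{co4} for $\Pic^{0}_X$ in the curve-fibre setting.
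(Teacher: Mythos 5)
Your overall strategy coincides with the paper's: the negativity comes from the Hodge index theorem, the codimension is lowered by linear sections of $X$ together with the isogeny \eqref{eq5.10}, and Conjecture \ref{co4} is used to cut $\delta$ down to $1$. But there is a genuine gap in your $\delta=1$ argument, precisely at the point where you diverge from the paper by treating general $d$ directly on the $(d+1)$-fold $\sX$.

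The gap is the claim that ``after replacing $\tilde L$ by $\tilde L+Nf^{*}H'$ \dots we may assume $\tilde L$ is ample on $\sX$.'' For a projective $f:\sX\to B$ with $B$ projective and $H'$ ample on $B$, the class $\tilde L+Nf^{*}H'$ is ample for $N\gg0$ \emph{if and only if} $\tilde L$ is $f$-ample; and an arbitrary lift of an ample $L\in\Pic(X)$ need not be $f$-ample. It is relatively ample over a neighbourhood of the generic point of $B$, but it can have degree $\le 0$ on components of the degenerate fibres (for instance on exceptional curves introduced by the resolutions and alterations that produce the model of Lemma \ref{ldJ}); since $f^{*}H'$ is numerically trivial on every fibre, no twist by $f^{*}H'$ can repair this. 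Without an ample (or at least nef and big) class, the signature statement for $\phi(x,y)=x\cdot y\cdot\tilde L^{d-1}$ on $N^1(\sX)$ that your argument rests on is simply not available. (Modifying $\tilde L$ by vertical divisors would not change $q(\alpha)$, but producing an $f$-ample such modification is itself a nontrivial claim you would have to prove.) The paper sidesteps this entirely by reducing to $d=1$ \emph{first, for all $\delta$}, using the correspondence functoriality of Theorem \ref{p3} ($\langle i^{*}\alpha,i^{*}\alpha\rangle=\langle\alpha,L^{d-1}\alpha\rangle$ for a smooth linear-section curve $i:C\inj X$, with $\LN(\Pic^0_X,K/k)\to\LN(\Pic^0_C,K/k)$ mono by \eqref{eq5.10}); once $d=1$ and $\delta=1$, $\sX$ is a surface and the unpolarized Hodge index theorem applies, with $N^1(\sX)^0$ the orthogonal of the isotropic vector $f^{*}t$. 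Your treatment of the equality case (numerical triviality on $\sX$, hence a multiple in $\Pic^0(\sX)$, hence landing in the $K/k$-trace) is fine and matches the mechanism of the paper.

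Two further points are glossed over. First, even for $\delta=1$ a semi-stable (or Lemma \ref{ldJ}-type) model need not exist over $B$ itself; one must pass to an alteration $\tilde B\to B$ and descend the value of $q$, which for $\delta>1$ also turns $\ell$ into the merely nef class $\pi^{*}\ell$ — the paper handles this with the $\sL+n\pi^{*}\ell$ polynomial trick in step b) of its proof, which has no counterpart in your write-up. Second, ``Conjecture \ref{co4} applied iteratively'' hides the real work of the $\delta>1$ case: after cutting $B$ by one hyperplane section $\Gamma$, the surface $\sX_\Gamma$ is in general singular and not semi-stable over $\Gamma$, so one must resolve it, pass to a semi-stable model over a finite cover $\Gamma'$, and verify the compatibility $I^{!}=\pi_{*}\tilde I^{*}$ identifying the two computations before the next application of Conjecture \ref{co4} is even meaningful; this is why the paper's step d) is a genuine induction on $\delta$ rather than a single application of \eqref{eq20}.
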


\begin{rque} As pointed out in Remark \ref{r10} b), the notation using elements  is abusive in  $\Ab\otimes \Q$. Theorem \ref{t3} could  be converted into an arrow-theoretic statement; similarly, the notion ``negative definite'' for a quadratic form with values in $\Z$ is unambiguous in $\Ab\otimes \Q$, by using Remark \ref{r10} a).

However, converting the proof below into arrow-theoretic notation would be cumbersome at best. Since the source and target of the quadratic form $q$ are finitely generated abelian groups, we can tensor everything with $\Q$ (i.e. apply the natural functor from $\Ab\otimes \Q$ to $\Q$-vector spaces) without losing information, and reason with honest elements. This is what we do in this proof.
\end{rque}

\begin{proof}  a) We first reduce to $d=1$ as follows. Suppose $d>1$. We may assume $L$ very ample. Let $i:C\inj X$ be a  smooth irreducible curve given by successive hyperplane sections from the projective embedding determined by $L$. By the functoriality of Theorem \ref{p3}, we have
\[\langle i^*\alpha,i^*\alpha\rangle_\num = \langle \alpha,i_*i^*\alpha\rangle_\num = \langle \alpha,L^{d-1}\cdot \alpha\rangle_\num,\]
hence $q(X,B,L,\ell)(\alpha) = q(C,B,i^*L,\ell)(i^*\alpha)$.  By the isogeny \eqref{eq5.10}, \allowbreak $\LN(\Pic^0_X,K/k)\to \LN(\Pic^0_C,K/k)$ is mono in $\Ab\otimes \Q$.

We now assume $d=1$. 

b) We reduce to the situation of Lemma \ref{ldJ}. Let $\sX\by{f}\tilde B$ be as in loc. cit.  Since $\pi:\tilde B\to B$ is projective, pick a very ample divisor $\sL$ relative to $\pi$. By EGA II, Prop. 4.4.10 (ii), $\sL+ n\pi^*\ell$ is then very ample (relative to $\tilde B\to \Spec k$) for all $n\gg 0$. Let $\alpha\in \LN(\Pic^0_X,K/k)-\{0\}$. Assuming the theorem true over $\tilde B$, we have
\[\deg\left(\langle \pi^*\alpha,\pi^*L^{d-1}\pi^*\alpha\rangle_\num\cdot (\sL+ n\pi^*\ell)^{\delta-1}\right)<0\]
for all $n\gg 0$. This is a polynomial in $n$, with dominant term 
\[
\deg\left(\langle \pi^*\alpha,\pi^*L^{d-1}\pi^*\alpha\rangle_\num\cdot \pi^*\ell^{\delta-1}\right)
=\deg\left(\langle \alpha,L^{d-1}\alpha\rangle_\num\cdot \ell^{\delta-1}\right)
\]
by \eqref{eq20}, which must be negative.

We now assume that we are in the situation of Lemma \ref{ldJ}.

c) Assume $\delta=1$. Observe that the pairing \eqref{eq2}, composed with the degree, is then the intersection pairing. By the Hodge index theorem, this pairing has signature $(1,\rho-1)$ where $\rho=\rk N^1(\sX)$. Since  $N^1(\sX)^0$ is the orthogonal of the  isotropic vector $f^*t$ for $t\in N^1(B)-\{0\}$,  the restriction of the intersection pairing to this subspace is negative with kernel generated by $f^*t$. Since $f^*t$ also generates the kernel of $N^1(\sX)^0\to \LN(\Pic^0_X,K/k)$, the quadratic form $q$ is negative definite, as requested.

d) Assume finally $\delta>1$. Similarly to a), we may assume $\ell$ very ample. We may also assume  $k$ algebraically closed (in particular, infinite). Let $Z\subset B$ be the locus of non-smoothness of $f$. In the family of hyperplane sections of $B$ relative to the projective embedding given by $\ell$, only finitely many may be contained in $Z$, therefore we can pick a smooth hyperplane section $h\not\subset Z$. By induction, there exists a smooth ample curve $i:\Gamma\subset B$ determined by $\ell$ such that the generic fibre $X(E)$ of $\sX_\Gamma=f^{-1}(\Gamma)$ is smooth over  $E=k(\Gamma)$.  

Write $I:\sX_\Gamma\inj\sX$, $g:\sX_\Gamma\to \Gamma$ for the two corresponding projections.  For $\tilde \alpha\in CH^1(\sX)$, we have
\[\langle \tilde \alpha,\tilde \alpha\rangle \cdot \ell^{\delta-1}=i_*i^*f_*(\tilde \alpha^2)=i_*g_*I^!(\tilde \alpha^2). \]

Since $\deg_B\circ i_*=\deg_\Gamma$, it is enough to compute $g_*I^!(\tilde \alpha^2)$.

Choose a resolution of singularities $\pi:\sY\to \sX_\Gamma$ of the surface $\sX_\Gamma$; let $\tilde I=I\circ \pi$  and $\tilde g=g\circ \pi$. The same reasoning as in the proof of Proposition \ref{p5} (iv) yields the identity $I^!=\pi_*\tilde I^*$, hence
\[g_*I^!(\tilde \alpha^2)=\tilde g_*\tilde I^*(\tilde \alpha^2)=\tilde g_* (\tilde I^*\tilde\alpha)^2. \]

Now there exists a finite extension $E'/E$ with smooth projective $k$-model $\Gamma'$, and a semi-stable model $\sY'$ of $X(E)\otimes_E E'$ over $\Gamma'$ mapping to $\sY$ by a morphism $\phi$. If $d=[E':E]$, we therefore have
\[(\tilde g\circ \phi)_* (\tilde I\circ\phi)^*(\tilde \alpha)^2= d\tilde g_* (\tilde I^*\tilde\alpha)^2. \]
Under Conjecture \ref{co4}, $\Gamma$ may be chosen such that the map induced by $\tilde I^*$ 
\[\LN(\Pic^0_X,K/k)\to \LN(\Pic^0_{X(E)},E/k)\]
has finite kernel, and our reduction to $\delta=1$ is complete.
\end{proof}

\subsection{Another pairing}\label{s5.1} Here we assume $B$ projective; we write $A=\Tr_{K/k} \Pic^0_X$ and $P=\Pic^0_B$. 

\begin{prop}\label{p5.4} Suppose $d=1$. In the pairing of Theorem \ref{t2} b), we have $\langle A(k),A(k)\rangle\subseteq \Pic^0(B)\{p\}$ in $\Ab\otimes \Q$, where $p$ is the exponential characteristic of $k$.
This induces a pairing in $\Ab\otimes \Q$
\begin{equation}\label{eq5.3}
\LN(\Pic^0_X,K/k)\times A(k)\to P(k)/P(k)\{p\}.
\end{equation}
\end{prop}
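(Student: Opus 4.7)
The plan combines functoriality of \eqref{eq9} with the vanishing Proposition~\ref{p5.1}. First, by Theorem~\ref{t2}~c), composing $\langle,\rangle$ with the projection $\Pic(B)\to N^1(B)$ factors through $\LN(\Pic^0_X,K/k)\times \LN(\Alb_X,K/k)$. Since $d=1$ yields $\Alb_X=\Pic^0_X$, and elements of $A(k)$ map to zero in $\LN(\Pic^0_X,K/k)$, the image of $\langle A(k),A(k)\rangle$ in $N^1(B)$ vanishes. In $\Ab\otimes\Q$, the kernel of $\Pic(B)\to N^1(B)$ equals $\Pic^0(B)=P(k)$, so the pairing lands in $P(k)$. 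It therefore remains to show that its prime-to-$p$ torsion vanishes.

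By Lemma~\ref{ldJ}, there is an alteration $\pi_B:B_1\to B$ with $B_1$ smooth projective over $k$, together with a semi-stable projective model $f_1:\sX_1\to B_1$ of $X_1:=X\otimes_K k(B_1)$ with $\sX_1$ smooth over $k$ and smooth irreducible components in fibres over $B_1^{(1)}$. By Theorem~\ref{p3} and the fact that $(\pi_B)_*$ preserves $p$-primary torsion in $\Pic^0$, it suffices to prove the vanishing of prime-to-$p$ torsion for the pairing $\langle,\rangle_{B_1}$ evaluated on pull-backs of elements of $A(k)\subseteq \Pic^0(X)\to \Pic^0(X_1)$.

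Next, we construct, for each $a\in A(k)$, a lift $\tilde\alpha\in\Pic^0(\sX_1)$ of the image of $a$ in $\Pic^0(X_1)$. The trace morphism $A_{k(B_1)}\to\Pic^0_{X_1}$ is represented by a normalised Poincaré line bundle $\sL$ on $A\times_k X_1$, with $\sL|_{\{0\}\times X_1}$ trivial. Since $A\times_k\sX_1$ is smooth and $A\times_k X_1$ is pro-open inside it, $\sL$ extends to a line bundle $\tilde\sL$ on $A\times_k\sX_1$; after replacing $\tilde\sL$ by $\tilde\sL\otimes p_2^*(\tilde\sL|_{\{0\}\times\sX_1})^{-1}$ we may assume $\tilde\sL|_{\{0\}\times\sX_1}$ is itself trivial. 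Then $\tilde\alpha:=\tilde\sL|_{\{a\}\times\sX_1}$ is, by connectedness of $A$, an element of $\Pic^0(\sX_1)$ restricting on $X_1$ to the image of $a$. Since the $l$-adic cycle class map for $\sX_1$ factors through $\Pic(\sX_{1,\bar k})$, and $\Pic^0(\sX_{1,\bar k})$ is $l$-divisible for each $l\ne p$, we have $\Pic^0(\sX_1)\subseteq CH^1_l(\sX_1)$ for every $l\ne p$; thus both $\tilde\alpha$ and the analogous $\tilde\beta$ lie in $CH^1_l(\sX_1)$.

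Proposition~\ref{p5.1} with $H=H_l$ then gives that $\langle\tilde\alpha,\tilde\beta\rangle_{B_1}\in\Pic(B_1)$ is torsion of order prime to $l$ for every $l\ne p$, hence $p$-primary torsion; and it lies in $\Pic^0(B_1)$ by the first paragraph. Push-forward by $m^{-1}(\pi_B)_*$ (with $m=\deg\pi_B$, invertible in $\Ab\otimes\Q$) sends this into $P(k)\{p\}$, proving the first assertion. The induced pairing \eqref{eq5.3} follows by quotienting the first factor by $A(k)$: since the restriction to $A(k)\times A(k)$ lands in $P(k)\{p\}$, it vanishes modulo this subgroup, yielding $\LN(\Pic^0_X,K/k)\times A(k)\to P(k)/P(k)\{p\}$. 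The main technical step will be the construction of the algebraically trivial lift $\tilde\alpha$, which is made possible by the smooth model furnished by Lemma~\ref{ldJ}.
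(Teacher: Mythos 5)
Your argument is correct and follows essentially the same route as the paper: reduce to a smooth projective model via Lemma \ref{ldJ} (after a finite extension of $K$), lift elements of $A(k)$ to $\Pic^0(\sX_1)$, and apply Proposition \ref{p5.1} with $H=H_l$ for every $l\ne p$ to get $p$-primary torsion. The only difference is that the paper obtains the lifting statement $j^*\Pic^0(\sX_1)=A(k)$ by citing \cite[3.2 a)]{langneron}, whereas you reprove the needed inclusion $A(k)\subseteq j^*\Pic^0(\sX_1)$ directly by extending a Poincar\'e bundle over the smooth model --- a legitimate substitute.
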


\begin{proof} We may first reduce to $k$ perfect and then pass to a finite extension of $K$, hence reduce to the existence of a smooth model $\sX$ (e.g. as in Proposition \ref{ldJ}).  By \cite[3.2 a)]{langneron}, we have
\[j^*\Pic^0(\sX)= A(k).\]

By Proposition \ref{p5.1}, $\langle \Pic^0(\sX),\Pic^0(\sX)\rangle$ is $p$-primary torsion, hence the claim.
\end{proof}

\begin{qn} Does \eqref{eq5.3} extend to arbitrary $d$, replacing $A(k)$ by $\Tr_{K/k} \Alb_X(k)$?
\end{qn}

Let $E=k(A)$. 
Using \cite[Th. 3.1]{milne},  we  
deduce from \eqref{eq5.3} a pairing
\[
\LN(\Pic^0_X,K/k)\times \Mor_k(A,A)\to \Mor_k(A,P)/\Mor_k(A,P)\{p\}).
\]

Evaluating on the identity $1_A\in \Mor_k(A,A)$, we get a homomorphism
\[\LN(\Pic^0_X,K/k)\to \Mor_k(A,P)/\Mor_k(A,P)\{p\})
\]
and using the canonical isomorphism $\Mor_k(A,P)\simeq P(k)\oplus \Hom_k(A,P)$, a final homomorphism
\begin{equation}\label{eq5.4}
\LN(\Pic^0_X,K/k)\to \Hom(\Tr_{K/k} \Pic^0_X, \Pic^0_B)
\end{equation}
because the right hand group is torsion-free. It is an exercise to check that, evaluating this homomorphism on $k$-points, we get back \eqref{eq5.3} (improved).

If $B=\P^1$ or $\Tr_{K/k} \Pic^0_X=0$, the right hand side is $0$ while the left hand side is nonzero in general. Yet we may ask:

\begin{qn}\label{q3} When is \eqref{eq5.4} surjective (in $\Ab\otimes \Q$)?
\end{qn}

\appendix

\section{Extending rational points to sections} 

\hfill by Qing Liu
\bigskip

\begin{prop}\label{pa} Let $B$ be a noetherian connected regular excellent scheme. Let $C$ be a
connected projective 
regular curve over the function field $K$ of $B$. Let
$c_1, \dots, c_n\in C(K)$. Then there exist an  
open subset $U\subseteq B$ with $\codim(B\setminus U, B)\ge 2$ 
and a proper scheme $\mathcal C \to U$, with $\sC$ regular, containing the $c_i$'s such that 
the latter extend to sections of $\mathcal C\to U$.
\end{prop}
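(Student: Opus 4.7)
The plan is to start from an arbitrary projective model of $C$ over $B$, show via the valuative criterion that the $c_i$'s extend to sections over an open with codimension-$2$ complement, and then modify the model so that these sections land in the smooth locus.

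First, I would pick a projective embedding $C\hookrightarrow\P^N_K$ and let $f:\bar{\mathcal C}\to B$ be the closure of $C$ in $\P^N_B$, a projective model of $C$. Let $\bar c_i\subseteq\bar{\mathcal C}$ be the scheme-theoretic closure of $c_i:\Spec K\hookrightarrow\bar{\mathcal C}$; it is an integral closed subscheme mapping birationally to $B$ via $f$. The claim to prove is that $\bar c_i\to B$ is an isomorphism above an open $U_i\subseteq B$ with $\codim(B\setminus U_i,B)\ge 2$. For a codimension-$1$ point $b\in B$, $\mathcal{O}_{B,b}$ is a DVR, and by the valuative criterion of properness applied to $\bar{\mathcal C}\to B$ the $K$-point $c_i$ extends uniquely to $\tilde c_i:\Spec\mathcal{O}_{B,b}\to\bar{\mathcal C}$, which factors through $\bar c_i$ because the image of $\tilde c_i$ is irreducible and contains $c_i$. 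This yields a section of $Y:=\bar c_i\times_B\Spec\mathcal{O}_{B,b}\to\Spec\mathcal{O}_{B,b}$. Since $\bar c_i\to B$ is birational, the generic fibre of $Y$ is the single point $\Spec K$, which is the unique generic point of the integral scheme $Y$; so the section meets the generic point of $Y$, and a closed immersion into an integral scheme whose image contains the generic point must be an isomorphism. Hence $Y\cong\Spec\mathcal{O}_{B,b}$, i.e.\ $\bar c_i\to B$ is an iso at $b$. The non-iso locus of $\bar c_i\to B$ is then closed without codimension-$1$ components, giving $U_i$, and $U':=\bigcap_i U_i$ is an open of the desired codimension above which every $c_i$ becomes a section.

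Second, I would arrange smoothness. The image in $U'$ of the non-smooth locus of $f|_{U'}$ is closed and has only finitely many codimension-$1$ components, with generic points $b_1,\dots,b_s$. At each $b_j$, I would apply N\'eron smoothening (the standard procedure from the theory of N\'eron models over a DVR): a finite sequence of blow-ups of $\bar{\mathcal C}\times_B\Spec\mathcal{O}_{B,b_j}$ with centres contained in the closed fibre over $b_j$ turns every strict transform of $\tilde c_i$ into a section landing in the smooth locus. Since each such centre sits inside a single fibre of $f$, the corresponding blow-up is canonically defined on all of $\bar{\mathcal C}$ and is an isomorphism outside any prescribed neighbourhood of $b_j$; the modifications at the different $b_j$'s are therefore mutually compatible and glue to a projective birational $\bar{\mathcal C}'\to\bar{\mathcal C}$ over $B$ in which the strict transforms $\bar c_i'$ land in the smooth locus of $\bar{\mathcal C}'\to B$ at every codimension-$1$ point of $U'$.

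Finally, the remaining bad locus $W\subseteq U'$ at which some $\bar c_i'$ still meets the non-smooth locus of $\bar{\mathcal C}'\to U'$ is closed with no codimension-$1$ component, hence of codimension $\ge 2$ in $B$; setting $U:=U'\setminus W$ and letting $\mathcal C$ be the smooth locus of $\bar{\mathcal C}'|_U\to U$ produces a smooth $\mathcal C\to U$ through which the $c_i$'s extend to sections, as required. The main obstacle is the globalisation of N\'eron smoothening, which is formulated locally over a DVR: one must check that the finitely many local modifications commute and glue to a scheme over $B$, which holds because their centres are contained in disjoint fibres over the isolated bad points $b_j$.
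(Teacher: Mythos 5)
Your argument is correct in substance, but it takes a genuinely different route from the paper's at the decisive step. The paper first makes the total space \emph{regular} in codimension one: over each bad local ring $\mathcal O_{B,b_j}$ it invokes resolution of singularities for excellent two-dimensional schemes, glues the resulting regular models over an open $U_0$ with codimension-$2$ complement, and only then closes up the $c_i$; that the resulting sections land in the smooth locus is then the regularity criterion of \cite[3.1, Prop.~2]{BLR} (a section of a regular scheme of finite type over a regular base factors through the smooth locus). You instead keep an arbitrary projective model and smoothen it \emph{along the given sections} by N\'eron's smoothening process. Your route is more economical: it never produces a regular total space, only smoothness along the $\tilde c_i$, which is all that is asked, and it replaces resolution of singularities (hence the excellence hypothesis on $B$) by the section-driven blow-up algorithm of \cite[Ch.~3]{BLR}, which works over arbitrary discrete valuation rings. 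The globalisation of the local blow-ups is handled correctly in spirit, though the phrase ``canonically defined on all of $\bar{\mathcal C}$'' glosses over the need to take scheme-theoretic closures of the centres (which live in the fibre over the \emph{non-closed} point $b_j$) and then to shrink to neighbourhoods $V_j$ on which the various modifications have pairwise disjoint centres; this is the same glueing as in Step~1 of the paper's proof. Your first step (valuative criterion at codimension-$1$ points, plus spreading out an isomorphism over $\Spec\mathcal O_{B,b}$ to a neighbourhood of $b$) is a more explicit version of the paper's Step~2 and is fine.

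One point needs attention. The smoothening theorem in \cite{BLR} is stated for an $R$-scheme whose \emph{generic fibre is smooth}, whereas the proposition only assumes $C$ regular; over an imperfect $K$ these differ. Since the $c_i$ are $K$-rational and $C$ is regular, $C$ is smooth at each $c_i$, and the smoothening process only uses smoothness of the generic fibre at the points where the sections to be smoothened pass; so the argument goes through, but you should say this explicitly (or restrict to the case where $C$ is smooth, which is the case needed for Lemma~\ref{l-liu}).
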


{\bf Step 1.} We extend $C$ to a projective regular scheme $\mathcal C_0$ 
over some ``big'' open subset $U_0$ of $B$. 
\smallskip 

First we extend $C$ to an integral projective scheme $f : \mathcal X\to B$
(taking for instance the scheme-theoretical closure of $C$ 
in a suitable $\mathbb P^n_B$). Let $\mathcal X_{\mathrm{sing}}$ be
the closed subset of the singular points of $\mathcal X$. 
Then $V:=B\setminus f(\mathcal X_{\mathrm{sing}})$ is a dense open
subset of $B$ such that $\mathcal X_V$ is regular. 

Let $b_1, \dots, b_m$ be the codimension $1$ points
of $B$ inside of $B\setminus V$. We are going to extend $\mathcal X_V$ above an 
open subset $U_0$ of $B$ containing the $b_j$'s. 
For each $j\le m$, we have a relative integral curve 
$\mathcal X\times_B \Spec \cO_{B,b_j}$ over the discrete valuation 
ring $\cO_{B, b_j}$ with regular generic fiber $C$. As $B$ is
excellent,  
there exists a resolution of singularities 
$$\mathcal X'_j \to   \mathcal X\times_B \Spec \cO_{B,b_j} \to  \Spec
\cO_{B,b_j}.$$ 
Each $\mathcal X'_j$ is a projective regular curve over $ \Spec \cO_{B,b_j}$
and extends to a projective regular curve $\mathcal X_j$ 
over some open neighborhood $V_j\ni b_j$.  
Shrinking the (finitely many) $V_j$'s if necessary, we can 
suppose that for all 
$j, \ell\le m$, $\mathcal X_j$ and $\mathcal X_\ell$ coincide over
$V_{j}\cap V_\ell$ 
and that $\mathcal X_j$ coincides with $\mathcal X_V$ over $V\cap V_j$. Let 
$U_0$ be the union of $V$ and the $V_j$'s and let 
$\mathcal C_0\to U_0$ 
be obtained by glueing $\mathcal X_V$ and the $\mathcal X_j$'s. 
Then $\mathcal C_0$ is regular, proper over $U_0$ (by the fpqc descent
$V\coprod (\coprod_{1\le i\le m} V_j) \to U_0$ of properness, see EGA
IV$_2$, Proposition 2.7.1(vii)), and $\codim(B\setminus U_0, B)\ge 2$. 
\medskip

{\bf Step 2.} For all $i\le n$ we let  $P_i\subseteq \mathcal C_0$ be
the scheme-theoretical
closure of $\{ c_i\}$. Then $P_i\to U_0$ is proper birational, hence is an
isomorphism away from a closed subset $Z_i\subset U_0$ of codimension at
least $2$. To finish we let $U:=U_0\setminus (\cup_{1\le i\le n} Z_i)$ and let 
 $\mathcal C=(\mathcal C_0)_U\to U$. 
(As $U$ and $\mathcal C$ are regular, the section 
$(P_i)_U$ of $(\mathcal{C}\to U$ is contained in the smooth locus of $\mathcal C$ \cite[3.1, Prop. 2 and following paragraph]{BLR}.)

\enlargethispage*{40pt}

\end{document}